\DeclareMathAlphabet{\mathpzc}{OT1}{pzc}{m}{it}
\author{Roberto Bramati}
\address{Department of Mathematics: Analysis, Logic and Discrete Mathematics, Ghent University, Krijgslaan 281, 9000 Ghent, Belgium}
\email{Roberto.Bramati@UGent.be}
\author{Angela Pasquale}
\address{Universit\'e de Lorraine, CNRS, IECL, F-57000 Metz, France}
\email{angela.pasquale@univ-lorraine.fr}
\author{Tomasz Przebinda}
\address{Department of Mathematics, University of Oklahoma, Norman, OK 73019, USA}
\email{tprzebinda@ou.edu}
\thanks{Part of this research took place within the online Research Community on Representation Theory and Noncommutative Geometry sponsored by the American Institute of Mathematics. The first author is supported by the FWO Odysseus 1 grant G.0H94.18N: Analysis and Partial Differential Equations and by the Methusalem programme of the Ghent University Special Research Fund (BOF) (Grant number 01M01021). The second author would like to thank the University of Oklahoma for hospitality and financial support.
The third author gratefully acknowledges hospitality and financial support from the Universit\'e de Lorraine and partial support from the National Science Foundation under Grant DMS-2225892.}
\title[The resonances of the Capelli operators]{The resonances of the Capelli operators for small split orthosymplectic dual pairs}
\def\g{\mathfrak g}
\def\y{\mathrm y}
\def\sp{\mathfrak {sp}}
\def\o{\mathfrak o}
\def\u{\mathfrak u}
\newcommand{\R}{\mathbb{R}}
\newcommand{\C}{\mathbb{C}}
\newcommand{\DD}{\mathbb{D}}
\newcommand{\ZZ}{\mathbb{Z}}
\def\m{\mathfrak m}
\def\ss1{\mathfrak s_{\overline 1}}
\def\hs1{\mathfrak h_{\overline 1}}
\def\supp{\mathrm{supp}}
\def\Op{\mathrm{Op}}
\def\Ker{\mathrm{Ker}}
\def\G{\mathrm{G}}
\def\N{\mathrm{N}}
\def\E{\mathrm{E}}
\def\SOg{\mathrm{S}\mathrm{O}}
\def\K{\mathrm{K}}
\def\H{\mathrm{H}}
\def\M{\mathrm{M}}
\def\Sg{\mathrm{S}}
\def\L{\mathrm{L}}
\def\Bbb{\mathbb}
\def\N{\mathrm{N}}
\def\A{\mathrm{A}}
\def\H{\mathrm{H}}
\def\GL{\mathrm{GL}}
\def\SL{\mathrm{SL}}
\def\SO{\mathrm{SO}}
\def\Sp{\mathrm{Sp}}
\def\Og{\mathrm{O}}
\def\Ug{\mathrm{U}}
\def\Ind{\mathrm{Ind}}
\def\Ca{\mathcal{C}}
\def \t{\tilde}
\def \wt{\widetilde}
\def\W{\mathsf{W}}
\def\Wv{\mathsf{W}}
\def\Uv{\mathsf{U}}
\def\V{\mathsf{V}}
\def\Dsf{\mathsf{D}}
\def\X{\mathsf{X}}
\def\Y{\mathsf{Y}}
\def\Xv{\mathrm{X}}
\def\Yv{\mathrm{Y}}
\def\proj{\mathsf{P}}
\def\rad{\mathsf{R}}
\def\Zb{\mathbb {Z}}
\def\End{\mathop{{\rm End}}\nolimits}
\def\det{\mathop{{\rm det}}\nolimits}
\def\ad{\mathop{{\rm ad}}\nolimits}
\def\Hom{\mathop{{\rm Hom}}\nolimits}
\def\Re{\mathrm{Re}}
\def\Im{\mathop{{\rm Im}}\nolimits}
\def\tr{\mathop{{\rm tr}}\nolimits}
\def\sign{\mathop{{\rm sign}}\nolimits}
\def\lim{\mathop{{\rm lim}}\nolimits}
\newcommand\inner[2]{\langle #1,#2\rangle}
\def\Res{\mathop{{\rm Res}}\limits}
\def\Ind{\mathop{{\rm Ind}}\nolimits}
\def\supp{\mathop{{\rm supp}}\nolimits}
\def\U{\mathcal{U}}
\def\Z{\mathcal{Z}}
\def\Ss{\mathcal{S}}
\newcounter{thh}
\newtheorem{thm}[thh]{{Theorem}}
\newtheorem{pro}[thh]{{Proposition}}
\newtheorem*{pro*}{{Proposition}}
\newtheorem{cor}[thh]{{Corollary}}
\newtheorem*{coro*}{{Corollary}}
\newtheorem{lem}[thh]{{Lemma}}
\newcounter{remcounter}
{\theoremstyle{definition}\newtheorem{rem}[remcounter]{{Remark}}}
{\theoremstyle{definition}\newtheorem{rems}[remcounter]{{Remarks}}}
\theoremstyle{definition}
\newtheorem*{defi*}{{D\Ca\Cafinition}}
\newtheorem*{nota*}{{Notation}}
\newenvironment{prf}{\begin{proof}}{\end{proof}}
\def\muet{ \ifthenelse{\equal{a}{b}}}
\def\Z'{\Bbb{Z}'}
\def\fo{\mathcal{F}}
\def\biblio{\sloppy
\bibliographystyle{alpha}
\bibliography{CapelliResonance-arXiv2}}
\begin{document}
\date{}
\subjclass[2010]{Primary: 43A85; secondary: 58J50, 22E30}
\keywords{Resonances, Capelli operators, Howe's correspondence}

\begin{abstract}
Let $(\G,\G’)$ be a reductive dual pair in ${\rm Sp}(\Wv)$ with ${\rm rank}\, \G \leq {\rm rank}\, \G’$ and $\G'$ semisimple. The image of the Casimir element of the universal enveloping algebra of $\G'$ under the Weil representation $\omega$ is a Capelli operator. It is a hermitian operator acting on the smooth vectors of the representation space of $\omega$. We compute the resonances of a natural multiple of a translation of this operator for small split orthosymplectic dual pairs. The corresponding resonance representations turn out to be $\G\G’$-modules in Howe's correspondence. We determine them explicitly. 
\end{abstract}

\maketitle

\tableofcontents

\def\v0{v_0}
\def\u0{u_0}
\def\sm{\mathrm {sum}}

\section{Introduction}

The notion of resonances originated in the '30s in Quantum Mechanics. 
As described in \cite{Harrell}, the story goes back to 1926, when Schr\"odinger studied the Stark effect, i.e. the shifts caused to hydrogen's emission spectrum by the application of a constant field. The hydrogen Stark Hamiltonian is the unbounded operator on $\L^2(\R^3)$ given by
\[H=\Delta-\frac{1}{|x|}+\kappa x_1\,\]
where $\kappa\geq 0$ is the electrical field strength and the field acts in the $x_1$-direction. 
In Schr\"odinger's model, the energies were the eigenvalues of $H$ and the model was based on eigenfunction expansions. This work was received with great enthusiasm by many physicists of the time. For example, Epstein's 1926 article in Nature, see \cite{Epstein}, considered it to be ``of extraordinary importance''. It had great influence on modern physics. Nevertheless, Schr\"odinger's analysis contained a mistake: the hydrogen Stark Hamiltonian has no eigenvalues if $\kappa>0$. 
This absence of eigenvalues was first noticed by Oppenheimer in 1928. Oppenheimer did not prove it, but referred for the proof to a work of Weyl, where there was no proof either.
Finally, in 1951, Titchmarsh proved that the Stark Hamiltonian has no eigenvalues.
The ``phantom eigenvalues'' in the Stark effect are in fact resonances and
the ``eigenfunction expansions'' are resonant state expansions. 
Resonances are discrete spectral data, which might replace eigenvalues for differential operators with a continuous spectrum. 

Rigorous mathematical approaches to resonances were formulated only in the '70s and '80s. 
Consider for example a Schr\"odinger operator
$H=\Delta+V$ on $\L^2(\R^n)$. Here
$\Delta=-\sum_{j=1}^n \frac{\partial^2}{\partial x_j^2}$ and $V$ is a potential acting as a multiplication operator.
Under suitable assumptions, $H$ is an unbounded self-adjoint operator on $\L^2(\R^n)$
with continuous spectrum $[0,+\infty)$.
For $\zeta \in \C \setminus [0,+\infty[$, the resolvent of $H$, i.e. 
$R_H(\zeta)=(H-\zeta)^{-1}$
is a bounded operator on $\L^2(\R^n)$, depending holomorphically on $\zeta$.
As an operator on  $\L^2(\R^n)$, $R_H(\zeta)$ has no analytic extension across the spectrum of $H$.
But we can replace $\L^2(\R^n)$ by a smaller dense subspace, like $C^\infty_c(\R^n)$
and consider the map
\[
\C \setminus [0,+\infty) \; \ni \zeta \longrightarrow 
R_H(\zeta)=(H-\zeta)^{-1} \in {\rm Hom}(C^\infty_c(\R^n),{C^\infty_c}(\R^n)^*)\,,
\]
which might have some continuation across $ [0,+\infty)$, possibly to a Riemann surface.
If the continuation is meromorphic, then the poles are called the resonances of $H$.

It turns out to be convenient to replace the variable $\zeta \in \C \setminus [0,+\infty)$ with 
$z\in \C^+=\{w  \in \C: \Im w>0\}$ by substituting $\zeta=z^2$.
Define 
\[
R(z)=R_{H}(z^2)=(H-z^2)^{-1}\,.
\]
The problem of meromorphic extension of $R_H$ as a function of $\zeta\in\C \setminus [0,+\infty)$ is equivalent to that of $R$ as a function of $z\in \C^+$.

The theory of resonances of $H=\Delta+V$ appears naturally in many branches of mathematics, physics and engineering. We refer to \cite{DZ19} for more information.

The study of resonances of differential operators was extended beyond Euclidean settings. 
The most investigated situations concern the Laplacian on a complete noncompact Riemannian manifold with bounded geometry, such as hyperbolic and asymptotically hyperbolic manifolds, symmetric or locally symmetric spaces (mostly of rank 1). This is motivated by applications to geometric scattering, spectral theory, trace formulas, PDE's, and dynamical systems.

Riemannian symmetric spaces of the noncompact type are attractive because they have 
a well understood geometry, a well developed Fourier analysis (the Helgason-Fourier transform) and allow using tools from representation theory.
Recall that such a space is of the form $\G/\K$, where 
$\G$ is a connected noncompact real semisimple Lie group with finite center and $\K$ is a maximal compact subgroup of $\G$. 
The left-regular representation $\L$ of $\G$ on $\L^2(\G/\K)$ decomposes into isotypic components according to 
\[
\L^2(\G/\K)=\int_{\mathfrak{a}^*} \L^2(\G/\K)_{\pi_{i\lambda}} \; \frac{d\lambda}{c(i\lambda)c(-i\lambda)}
\]
where $\mathfrak{a}$ is Cartan subspace of the Lie algebra $\mathfrak{g}$ of $\G$, $\pi_{i\lambda}$ is the unitary principal series representation of $\G$ of parameter $\lambda\in \mathfrak{a}^*$ 
and $c(i\lambda)$ is Harish-Chandra's $c$-function. 
This decomposition is realized via the Helgason-Fourier inversion formula:
\begin{equation}
\label{decHelgason}
f(x)=\int_{\mathfrak{a}^*} \underbrace{(f\times \varphi_{i\lambda})(x)}_{f_{\pi_{i\lambda}}(x)} \, \frac{d\lambda}{c(i\lambda)c(-i\lambda)} \qquad (f\in C_c^\infty(\G/\K))\,,
\end{equation}
where $\varphi_{i\lambda}$ is the spherical function of spectral parameter $i\lambda$ and 
$\times$ denotes the convolution of functions on $\G/\K$.
Let $\mathcal{U}(\mathfrak{g})$ be the enveloping algebra of $\g$, $\mathcal{U}(\mathfrak{g})^\G$ the subalgebra of $\G$-invariant elements and let 
$\mathcal{C}\in \mathcal{U}(\mathfrak{g})^\G$  be the Casimir element. 
Then $\Delta=\L(-\mathcal{C})$ is the (positive) Laplacian on $\G/\K$. It is an
essentially self-adjoint unbounded operator on $\L^2(\G/\K)$, with continuous spectrum 
$[\rho_X^2,+\infty[$, where $\rho_X^2$ is a positive constant. 
It acts by the scalar $\inner{\lambda}{\lambda}+\rho_X^2$ on $\L^2(\G/\K)_{\pi_{i\lambda}}$.
Hence $\Delta-\rho_X^2$ has continuous spectrum $[0,+\infty)$
and acts by the scalar $\inner{\lambda}{\lambda}$ on $\L^2(\G/\K)_{\pi_{i\lambda}}$.
The resolvent 
$R(z)=(\Delta-\rho_X^2-z^2)^{-1}$ is a holomorphic function of $z\in \C^+$,
with values in the space of bounded operators on $\L^2(\G/\K)$.
It extends meromorphically from $\C^+$ to $\C$ (or to a Riemann surface over $\C$) by considering it as an operator on $C_c^\infty(\G/\K)$.
The explicit decomposition of $\L$ in \eqref{decHelgason} yields for $z\in \C^+$ and $f\in C_c^\infty(\G/\K)$:
\[
R(z)f(x)=\int_{\mathfrak{a}^*} \frac{1}{\inner{\lambda}{\lambda}-z^2} \; f_{\pi_{i\lambda}}(x) \, \frac{d\lambda}{c(i\lambda)c(-i\lambda)}\,.
\]
A meromorphic extension of $R(z)$ may exist because the terms in the integrand admit a meromorphic extension in $\lambda\in \mathfrak{a}_\C^*$. Namely:
\begin{enumerate}
\item
$\{\pi_{i\lambda}; \lambda\in \mathfrak{a}^*\} \subset \{ \text{spherical principal series representations $\pi_\lambda$}, \lambda\in \mathfrak{a}_\C^* \}$;
\item
$f_{\pi_{\lambda}}=f \times \varphi_\lambda$ exists and is a Paley-Wiener type function of $\lambda\in\mathfrak{a}_\C^*$;
\item
the Plancherel density $\frac{1}{c(i\lambda)c(-i\lambda)}$ extends as a meromorphic function in $\mathfrak{a}_\C^*$;
\item
$\frac{1}{\inner{\lambda}{\lambda}-z^2}$ is meromorphic in $\mathfrak{a}_\C^*$.
\end{enumerate}
Suppose that the resolvent has a meromorphic extension to $\C$, as it does in the real rank-one case, see e.g. \cite{MW00, HP09}, and let $z_0$ be a resonance. 
Since the Laplacian is $\G$-invariant, the group $\G$ acts on the residue space 
\[
\left\{ \Res_{z=z_0} R(z)f; f\in C^\infty_c(\G/\K)\right\}
\]
by the left-regular action. This is the resonance representation at $z_0$.

To summarize, for the Laplacian on $\G/\K$ we have:
\begin{enumerate}
\item
a unitary representation $\L$ of a reductive Lie group $\G$,
\item
a differential operator $\L(-\mathcal{C}+\text{constant})$,
where $\mathcal{C}$ is the Casimir element; 
\item
a representation of $\G$ at each resonance of $\L(-\mathcal{C}+\text{constant})$.
\end{enumerate}

It seems natural to replace $\L$ by an arbitrary unitary representation $\omega$ of $\G$ and 
study the resonances and the associated representations for
$
\omega(-\mathcal{C}+\text{constant})\,.
$ 

\medskip

Consider a reductive dual pair $(\G,\G')$ in the sense of Howe (see section \ref{section:Plancherel} for definitions) in a symplectic group $\Sp(\Wv)$. 
Let $\mathcal{U}(\g)$ denote the universal enveloping algebra of $\g$, and similarly for $\g'$. 
Let $\omega$ denote the Weil representation of the metaplectic group $\wt{\Sp}(\Wv)$ corresponding to a fixed unitary character of $\R$. 
Then the $\G$-invariants $\mathcal{U}(\g)^\G$ and $\G'$-invariants $\mathcal{U}(\g')^{\G'}$ are mapped by $\omega$ onto the same algebra of operators:
\begin{equation}
\label{Capelli-identity}
\omega\left(\mathcal{U}(\g)^\G\right)=\omega\left(\mathcal{U}(\g')^{\G'}\right)\,.
\end{equation}
Any operator in this algebra is called a Capelli operator. The equality \eqref{Capelli-identity} is a consequence of \cite[Theorem 7]{HoweRemarks}. See also \cite{PrzebindaInfinitesimal} and \cite[(0.1)]{Itoh05}.

Suppose that $\G'$ is semisimple. Then $\mathcal{U}(\g')^{\G'}$ contains a well-defined Casimir
element $\mathcal{C}'$. 
From the above equality, we know that there is $\mathcal{C}''\in\mathcal{U}(\g)^\G$ such that 
$\omega(\mathcal{C}'')=\omega(\mathcal{C}')$. Moreover, $\mathcal{C}''$ is unique because ${\rm rank} \G\leq {\rm rank} \G'$ (see \cite{PrzebindaInfinitesimal}).  
In the example of $(\Og_{1,1},\Sp_2(\R))$, we have $\mathcal{C}''=h^2-1$, where $h=\begin{pmatrix}
0 & 1 \\ -1 & 0
\end{pmatrix}$ is a basis of $\o_{1,1}$; see \eqref{Capelli1}. For the dual pairs $(\Sp_2(\R),\Og_{p,p})$, we have $\mathcal{C''}=\mathcal{C}-(p-1)^2+1$, where $\mathcal{C}$ is the Casimir operator of 
$\Sp_2(\R)$; see \eqref{Capelli100}.

The Capelli operator $\mathcal{C}^+$ we study in this paper is a natural multiple of a translation of $\omega(\mathcal{C'})=\omega(\mathcal{C''})$. It is chosen so that its continuous spectrum is $[0,+\infty)$. Furthermore, we consider orthosymplectic dual pairs $(\G,\G')$ with the rank of $\G$ or $\G'$ equal to $1$ and the orthogonal group is of the form $\Og_{p,p}$. Our goal is to determine the resonances of the Capelli operator $\mathcal{C}^+$ as an unbounded operator on the Hilbert space of $\omega$. We use the easier of the two groups in the dual pair, which is $\G$ in our notation, to obtain the spectral analysis of the operator $\mathcal{C}^+$, the meromorphic continuation of its resolvent and the resonance representations as $\G$-modules.
We could have tried to do the analysis of resonances of $\omega(\mathcal{C'})$ working with the more difficult group $\G'$. Nevertheless, since $\omega(\mathcal{C}')=\omega(\mathcal{C}'')$ and because of Howe's correspondence, we do not have to do it: the result would be the same. 

This paper is organized as follows. In section \ref{section:abstract-resonances} we outline the general idea of resonances for an operator of the form $\omega(\mathcal{C})$ where $\omega$ is a unitary representation of a Lie group $\E$ and $\mathcal{C}\in \mathcal{U}(\mathfrak{e})^\E$, where $\mathfrak{e}$ is the Lie algebra of $\E$. In section \ref{sectionO11-Sp2R}, we provide a complete
analysis of the resonances and residue representations for the Capelli operator $\mathcal{C}^+$ for the dual pair $(\Og_{1,1},\Sp_{2}(\R))$. The case $(\Og_{1,1},\Sp_{2n}(\R))$ with $n>1$ could be treated in a similar way, but the description of the resonance representations would be less explicit. In section \ref{section:Plancherel} we show how to decompose the restriction of the Weil representation to the smaller member of an orthosymplectic dual pair in the stable range. Finally, in the last section we apply the results of section \ref{section:Plancherel} to the dual pair 
$(\Sp_{2}(\R),\Og_{p,p})$ with $p\geq2$ and study the resonances and the associated resonance representations of $\mathcal{C}^+$. In Appendices \ref{appendix:Weil representation} and \ref{appendix:O11Sp2}  we recall some facts about the Weil representation.

\section{Abstract resonances}
\label{section:abstract-resonances}

Let $\E$ be a real Lie group with Lie algebra $\mathfrak{e}$ and let $(\omega,\V)$ be a unitary representation of $\E$. Let $(\cdot,\cdot)_\V$ denote the inner product of $\V$ and $\|\cdot\|_\V$ the associated norm. We denote by $\V^\infty$ the space of $C^\infty$-vectors for $(\omega,\V)$. It consists of the elements $v\in\V$ for which the map $E\ni g\mapsto \omega(g)v\in \V$ is $C^\infty$.

Let $\U(\mathfrak e)$ denote the universal enveloping algebra of the complexification of $\mathfrak{e}$. For short, the derived representation of $\U(\mathfrak e)$ acting on $\V^\infty$  will be indicated by the same symbol $\omega$ (in place of $d\omega$): 
\begin{equation}
\label{differentiation}
\omega(X)v=\left.\frac{d}{dt} \omega(\exp(tX))v\right|_{t=0} \qquad 
(X\in \mathfrak{e}, \; v\in \V^\infty)\,.
\end{equation}
As shown by Segal in \cite{Segal59}, $\V^\infty$ is the largest subspace of $\V$ on which all the $\omega(u)$, $u\in \U(\mathfrak{e})$, are defined (even if a specific $\omega(u)$ may be extended to a larger domain in $\V$).  The space $\V^\infty$ has a topology defined by the family of seminorms $\left\{p_D; D\in \U(\mathfrak e)\right\}$
where $p_D(v)=\|\omega(D)v\|_\V$ for $v\in \V^\infty$.
Then $\omega$ is a smooth representation of $\E$ on the Fréchet space $\V^\infty$.

For every $u\in \U(\mathfrak{e})$, the operator $\omega(u)$ with domain $\V^\infty$ is closable, with closure denoted by $\overline{\omega(u)}$. Let $\E_0$ denote the identity connected component of $\E$ and let
$\mathsf{D}$ be an $\E_0$-invariant dense subspace of $\V$ contained in $\V^\infty$. Poulsen proved (see \cite[p. 91 and Corollary 1.2]{Poulsen72}) that
\[
    \overline{\omega(u)}=\overline{\omega(u)|_\mathsf{D}}\,.
\]
This is useful because, despite the fact that 
$\V^\infty$ is a natural domain
for the operators $\omega(u)$, for practical purposes, it might be convenient to work on smaller dense domains. The above property says that the choice of the (group invariant) dense domain inside $\V^\infty$ is immaterial.

Let $u\mapsto u^+$ denote the conjugate-linear involution
of $\U(\mathfrak e)$ such that $X^+=-X$ for all $X\in\mathfrak{e}$. An element $u\in \U(\mathfrak e)$ is said to be hermitian if $u^+=u$.
For $u\in \U(\mathfrak{e})$ and $v,v'\in \V^\infty$ we have $(\omega(u)v,v')_\V=(v,\omega(u^+)v')_\V$.
So $\omega(u)^*$ extends $\omega(u^+)$.
 
For $\zeta\in\C$ outside the spectrum $\sigma(\omega(u))$ of $\overline{\omega(u)}$
the operator
\[
R_{\omega(u)}(\zeta)=(\overline{\omega(u)} -\zeta)^{-1}:\V\to\V
\]
exists and is continuous. This is how we understand the resolvent of $\omega(u)$ at $\zeta$. 
It is a holomorphic function on $\C\setminus \sigma(\omega(u))$ with values in the space of bounded linear operators on $\V$.

A decomposition of $\omega$ into unitary representations leads to an explicit expression for $R_{\omega(u)}$. Indeed, suppose that the unitary representation $(\omega,\V)$ of $\E$ decomposes as a direct integral 
\begin{equation}
\label{direct-integral}
\V=\int^\oplus_{\widehat{\E}} \V_\pi \, d\mu(\pi)
\end{equation}
of unitary isotypic representations $\V_\pi$ of type $\pi\in\widehat{\E}$, where the parameter set  $\widehat{\E}$ is the unitary dual of $\E$. (Recall that any unitary representation of a type I group on a separable Hilbert space has an essentially unique direct integral decomposition of the form \eqref{direct-integral}, see e.g. \cite[\S 2.4]{MackeyUnitaryBook}.) Notice that the support of the measure $\mu$ need not be the entire $\widehat{\E}$. 
Thus every element $v\in \V$ is represented by vectors $v_\pi\in \V_\pi$ and we will write this as 
\[
v=\int^\oplus_{\widehat{\E}} v_\pi \, d\mu(\pi)\,.
\]
The inner product on $\V$ is given in terms of the inner products 
$(\cdot,\cdot)_\pi$ on the $\V_\pi$'s by 
\[
(u,v)=\int^\oplus_{\widehat{\E}} (u_\pi,v_\pi)_\pi d\mu(\pi)\,,
\]
and the elements of $\V$ are precisely the measurable vector fields $v:\widehat{\E} \to \prod_{\pi\in \widehat{\E}} \V_\pi$ which are square integrable, i.e. $(v, v)<\infty$.
We identify two fields that are equal almost everywhere. For additional information on direct integrals and linear operators on them, see \cite[Chapitre II, \S 1-3]{Dixmier1969-Hilbert} and \cite{Nussbaum-Duke64}. The action of $\E$ on $\V$ diagonalizes according to:
\[
(\omega(g)v)_\pi=\pi(g)v_\pi \qquad (g\in\E\,,\ \pi\in \widehat{\E})\,.
\]

The following lemma was proved in \cite[Lemma 2]{Arnal76}.
\begin{lem}
\label{Arnal}
Keep the above notation and let $\{X_j\}$ be a basis of $\mathfrak{e}$. Then 
$v=\int^\oplus_{\widehat{\E}} v_\pi \, d\mu(\pi)$ belongs to $\V^\infty$
if and only if the following two conditions are satisfied:
\begin{enumerate}
\item 
$v_\pi\in \V_\pi^\infty$ for almost all $\pi \in \widehat{\E}$;
\item 
the fields $(\pi(X_i)^nv_\pi)$ are square integrable for every integer $n\geq 0$.
\end{enumerate}
In this case, for every $u\in\U(\mathfrak{e})$, we have
\[
\omega(u)v=\int_{\widehat{\E}}^\oplus \pi(u)v_\pi \,d\mu(\pi)\,.
\]
\end{lem}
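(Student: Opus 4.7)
The plan is to combine the fibered identity $(\omega(g)v)_\pi=\pi(g)v_\pi$ (recalled just before the lemma) with the classical Nelson--Goodman characterization of smooth vectors via an elliptic element of $\U(\mathfrak{e})$. Taking the Nelson Laplacian $\Delta=1-\sum_{j}X_j^2\in\U(\mathfrak{e})$, one has $\overline{\omega(\Delta)}$ essentially self-adjoint on $\V^\infty$ with
\[
\V^\infty=\bigcap_{n\geq 0}\mathrm{Dom}\bigl(\overline{\omega(\Delta)}^n\bigr),
\]
and the analogous identity in every isotypic fiber $\V_\pi$.

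First I would establish the integral formula $\omega(u)v=\int^\oplus\pi(u)v_\pi\,d\mu(\pi)$ for $v\in\V^\infty$. For $X\in\mathfrak{e}$, differentiating the fibered identity $\omega(\exp tX)v=\int^\oplus\pi(\exp tX)v_\pi\,d\mu(\pi)$ at $t=0$ yields $(\omega(X)v)_\pi=\pi(X)v_\pi$ for almost every $\pi$, since the difference quotient converges in $\V$ by smoothness of $v$ and the pointwise limit represents $\omega(X)v$ on each fiber. Linearity and induction on the degree extend this to every $u\in\U(\mathfrak{e})$, proving the last assertion of the lemma. Specialising to $u=\Delta^n$, iterating and passing to closures, the standard theory of direct integrals of self-adjoint operators yields
\[
\overline{\omega(\Delta)}^n=\int^\oplus_{\widehat{\E}}\overline{\pi(\Delta)}^n\,d\mu(\pi),
\]
whose domain consists of the square-integrable fields $v$ with $v_\pi\in\mathrm{Dom}(\overline{\pi(\Delta)}^n)$ almost everywhere and $\pi\mapsto\overline{\pi(\Delta)}^n v_\pi$ square integrable.

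Intersecting over $n\geq 0$ then identifies $\V^\infty$ with the measurable square-integrable fields whose components are a.e.\ smooth and for which every section $\pi\mapsto\overline{\pi(\Delta)}^n v_\pi$ is square integrable. Matching this with (1) and (2) is the content of Nelson's ellipticity estimates inside each fiber: on $\V_\pi^\infty$ the single powers $\pi(X_j)^n v_\pi$ of (2) are controlled pointwise by polynomial combinations of $\overline{\pi(\Delta)}^N v_\pi$ for $N\geq n/2$, while conversely $\overline{\pi(\Delta)}^n v_\pi$ is expressed as a finite combination of monomials of degree at most $2n$, so that the joint square-integrability of all the $\overline{\pi(\Delta)}^n v_\pi$ is equivalent to (2) once (1) is imposed. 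The main obstacle I expect is the careful identification of the self-adjoint closure $\overline{\omega(\Delta)^n}$ with the direct integral above rather than with a smaller symmetric extension; this is precisely the point where Poulsen's theorem recalled just before the lemma, applied fiberwise together with essential self-adjointness of $\overline{\pi(\Delta)}^n$ on $\V_\pi^\infty$, secures the required coincidence.
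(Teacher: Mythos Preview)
The paper does not give its own proof of this lemma: it simply cites \cite[Lemma 2]{Arnal76} and moves on. So there is no argument in the paper to compare yours against.

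Your sketch is the standard route and is essentially the one Arnal takes: reduce the question to the Nelson Laplacian $\Delta=1-\sum_j X_j^2$, use that $\overline{\omega(\Delta)}$ is essentially self-adjoint and decomposes as a direct integral of the fiberwise $\overline{\pi(\Delta)}$, then invoke the Nelson--Goodman identification $\V^\infty=\bigcap_n \mathrm{Dom}(\overline{\omega(\Delta)}^n)$ together with the elliptic estimates comparing $\|\pi(X_{i_1})\cdots\pi(X_{i_k})v_\pi\|$ and $\|\overline{\pi(\Delta)}^N v_\pi\|$. Two places deserve more care than you give them. First, in the step ``differentiating the fibered identity at $t=0$'': convergence of the difference quotient in $\V=\int^\oplus \V_\pi\,d\mu$ only yields a.e.\ fiberwise convergence along a subsequence, so you should argue that the limit exists in each fiber (this is where you use that $v_\pi\in\V_\pi^\infty$ a.e., or alternatively work through the resolvent of $\overline{\omega(\Delta)}$ rather than by naive differentiation). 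Second, the equivalence between condition~(2) as stated (square-integrability of $\pi(X_i)^n v_\pi$ for each single $i$) and square-integrability of all monomials $\pi(X_{i_1})\cdots\pi(X_{i_k})v_\pi$ is not automatic: the Nelson estimates bound mixed monomials by powers of $\Delta$, and powers of $\Delta$ by sums of mixed monomials, so one really needs the mixed monomials, not just pure powers $X_i^n$. This is a genuine, if minor, imprecision in the statement as quoted; Arnal's formulation and proof handle it.
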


A short argument based on Lemma \ref{Arnal} and the definitions involved proves the following corollary.
\begin{cor}
\label{cor:Arnal}
Let $(\omega,\V)$ be a unitary representation as above, with isotypic unitary decomposition 
$\V=\int_{\widehat{\E}}^\oplus \V_\pi \, d\mu(\pi)$. Let $u\in \mathcal{U}(\mathfrak{e})$ and let $v=\int_{\widehat{\E}}^\oplus v_\pi \, d\mu(\pi)$ be in the domain of $\overline{\omega(u)}$. 
Then the $v_\pi$ are in the domain of $\overline{\pi(u)}$ for almost all $\pi$ and 
\[
\overline{\omega(u)}v=\int_{\widehat{\E}}^\oplus \overline{\pi(u)}v_\pi \, d\mu(\pi)\,.
\]
\end{cor}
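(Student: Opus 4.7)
The plan is to reduce everything to Lemma \ref{Arnal} by approximating $v$ by smooth vectors and then passing to fibers. By definition of the closure $\overline{\omega(u)}$, since $v$ lies in its domain, there exists a sequence $v^{(n)}\in \V^\infty$ such that $v^{(n)}\to v$ and $\omega(u)v^{(n)}\to w:=\overline{\omega(u)}v$ in $\V$.

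By Lemma \ref{Arnal}, for each $n$ we have $v^{(n)}_\pi\in\V_\pi^\infty$ for $\mu$-a.e.\ $\pi$, and
\[
\omega(u)v^{(n)}=\int_{\widehat{\E}}^\oplus \pi(u)v^{(n)}_\pi\,d\mu(\pi).
\]
Translating the two $\V$-convergences into the defining inner product of the direct integral gives
\[
\int_{\widehat{\E}} \|v^{(n)}_\pi-v_\pi\|_\pi^2\,d\mu(\pi)\to 0,\qquad
\int_{\widehat{\E}} \|\pi(u)v^{(n)}_\pi-w_\pi\|_\pi^2\,d\mu(\pi)\to 0.
\]
By a standard diagonal extraction, I would pass to a subsequence (still denoted $n$) such that both $v^{(n)}_\pi\to v_\pi$ and $\pi(u)v^{(n)}_\pi\to w_\pi$ in $\V_\pi$ for $\mu$-a.e.\ $\pi$.

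For each such $\pi$, the sequence $v^{(n)}_\pi\in\V_\pi^\infty$ converges to $v_\pi$ while its image under $\pi(u)$ converges to $w_\pi$. By the very definition of $\overline{\pi(u)}$, this means $v_\pi\in\mathrm{dom}(\overline{\pi(u)})$ and $\overline{\pi(u)}v_\pi=w_\pi$. Hence
\[
\overline{\omega(u)}v=w=\int_{\widehat{\E}}^\oplus \overline{\pi(u)}v_\pi\,d\mu(\pi),
\]
which is the desired formula.

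The only delicate point is the joint a.e.\ selection: $L^2$-convergence in the direct integral only forces a.e.\ fiber convergence along a subsequence, so one must extract a single subsequence along which both $v^{(n)}_\pi\to v_\pi$ and $\pi(u)v^{(n)}_\pi\to w_\pi$ hold $\mu$-almost everywhere. This is routine once the two separate subsequences have been produced, but it is the one place where measurability and the direct integral structure really interact; everything else is a formal consequence of Lemma \ref{Arnal} and the definition of the closure.
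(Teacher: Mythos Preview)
Your argument is correct and is precisely the ``short argument based on Lemma \ref{Arnal} and the definitions involved'' that the paper invokes without spelling out. The relevant definition is that of the closure of a densely defined operator, and the only non-formal step---extracting a common subsequence along which both fiber sequences converge $\mu$-a.e.---is handled correctly.
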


For $\zeta\in\C \setminus \sigma({\omega(u))}$, the operator $\overline{\omega(u)}-\zeta$ is closed and invertible with bounded inverse $R_{\omega(u)}(\zeta)$. By \cite[Theorem 3(2)]{Nussbaum-Duke64} and
Corollary \ref{cor:Arnal}, $\overline{\pi(u)}-\zeta$ is invertible for almost all $\pi\in \widehat{\E}$ and for all $v=\int_{\widehat{\E}}^\oplus v_\pi \, d\mu(\pi) \in \V$,
\[
R_{\omega(u)}(\zeta)v=(\overline{\omega(u)} -\zeta)^{-1}v
=\int_{\widehat{\E}}^\oplus (\overline{\pi(u)} -\zeta)^{-1}  v_\pi \, d\mu(\pi)\,.
\]
Since $\|(\overline{\pi(u)} -\zeta)^{-1} v_\pi\|_\pi \leq \|(\overline{\omega(u)} -\zeta)^{-1}\|\| v_\pi\|_\pi$ for almost all $\pi\in \widehat{\E}$ by \cite[Ch. II,\S 2, 3 (1)]{Dixmier1969-Hilbert},  the operator $(\overline{\pi(u)} -\zeta)^{-1}$ is bounded on $\V_\pi$ for almost all $\pi$.

Let $\mathcal{Z}(\mathfrak e)$ denote the center of $\U(\mathfrak e)$. In \cite[Theorem and Corollary 3]{Segal59}, Segal proved that if $u\in \mathcal{Z}(\mathfrak{e})$ then the closure $\overline{\omega(u)}$ of $\omega(u)$ is equal to the adjoint of $\omega(u^+)$. In particular, for every hermitian $u\in \mathcal{Z}(\mathfrak e)$, the operator $\omega(u)$ is essentially self-adjoint.
The spectrum of $\overline{\omega(u)}$ is therefore real.
Likewise, the spectrum of $\overline{\pi(u)}$ is real for all $\pi\in \widehat{\E}$.

The most important elements in $\mathcal{Z}(\mathfrak e)$ are the (quadratic) Casimir elements. Let $B$ be a nondegenerate symmetric bilinear form on $\mathfrak{e}$ that is invariant under the adjoint action of $\mathfrak{e}$ on itself, i.e. $B(\ad X(Y), Z) + B(Y,\ad X(Z))=0$ for all $X,Y,Z \in \mathfrak {e}$. (Usually, $B$ is the Killing form if $\mathfrak {e}$ is semisimple.)
Let $\{X_j\}$ be a basis of $\mathfrak{e}$ and let $(b^{ij})$ be the inverse of the matrix 
$(b_{ij})$ where $b_{ij}=B(X_i,X_j)$. Then $\mathcal{C}_B=\sum_{ij} b^{ij} X_j X_k$ is the Casimir element associated with $B$. It is hermitian and 
belongs to $\mathcal{Z}(\mathfrak{e})$ by the $\ad$ invariance of $B$. In fact, it belongs to 
$\mathcal{U}(\mathfrak{e})^\E$.

\begin{rems}
\begin{enumerate}
    \item The property of essential self-adjointness of  $\omega(u)$ extends to 
    other hermitian non-central elements of $\U(\mathfrak{e})$. Suppose for instance that $\E$ is a noncompact semisimple Lie group with compact center and maximal compact subgroup $\K$. Let $\U(\mathfrak{e})^\K$ denote the subspace of $\K$-invariant elements of $\U(\mathfrak{e})$. Let $\mathsf{D}$ be an $\E$-invariant dense subspace of $\V$ contained in $\V^\infty$. Then the restriction $\omega(u)|_{\mathsf{D}}$ of $\omega(u)$ to $\mathsf{D}$ is essentially self-adjoint for every hermitian $u\in \U(\mathfrak{e})^\K$. See \cite[Corollary 3]{Segal59}. Nevertheless, there are hermitian $u\in \U(\mathfrak{e})$ for which $\omega(u)$ is not essentially self-adjoint. We refer to to \cite[Section 10.2]{Schmuedgen90} for additional information and references, and to \cite{Arnal76} for some counterexamples. 
    \item 
Recall that the G{\aa}rding subspace of $\V$ is defined as the subspace of $\V^\infty$ consisting of the finite linear combinations of the vectors $\omega(f)v$ for $f\in C_c^\infty(E)$ and $v\in \V$. Here $\omega(f)=\int_\E f(g)\omega(g)\, dg$ and $dg$ is a fixed left-invariant Haar measure on $\E$.
A remarkable theorem, proven by Dixmier and Malliavin \cite{DixmierMalliavin78}, is that $\V^\infty$ coincides with the G{\aa}rding subspace of $\V$.
\end{enumerate}
\end{rems}

By Segal's infinitesimal version of Schur's lemma, if $(\pi,V_\pi)$ is unitary and irreducible and $u\in \mathcal{Z}(\mathfrak{e})$, then $\pi(u)$ acts on $V_\pi^\infty$ as a real scalar multiple of the identity. This yields the following corollary. 

\begin{cor}
Let $(\omega,\V)$ with $\V=\int_{\widehat{\E}}^\oplus \V_\pi \, d\mu(\pi)$ be as above and let $u\in \mathcal{Z}(\mathfrak{e})$ be hermitian. Then, for every $\pi\in \widehat{\E}$
there is a constant $\lambda_{\omega(u),\pi}\in \R$ such that for all $\zeta\in \C\setminus \R$ and $v=\int_{\widehat{\E}}^\oplus v_\pi \,d\mu(\pi) \in \V$, we have
\[
R_{\omega(u)}(\zeta)v=(\overline{\omega(u)} -\zeta)^{-1}v
=\int_{\widehat{\E}}^\oplus (\lambda_{\omega(u),\pi} -\zeta)^{-1}v_\pi \, d\mu(\pi)\,.
\]
\end{cor}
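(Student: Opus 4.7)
The plan is to assemble three ingredients already at hand: Segal's infinitesimal Schur lemma cited immediately above the statement, Corollary \ref{cor:Arnal} together with the resolvent formula derived right after it, and Segal's theorem on essential self-adjointness of central hermitian elements.

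First, I would define $\lambda_{\omega(u),\pi}$ as the scalar by which $\pi(u)$ acts on $\V_\pi$. Strictly, Segal's lemma is phrased for irreducibles, whereas $\V_\pi$ is the isotypic component of type $\pi$. To bridge this, I would use the standard factorization $\V_\pi\cong V_\pi\,\widehat{\otimes}\,M_\pi$ with $\E$ acting through $\pi\otimes\mathrm{Id}_{M_\pi}$ on some multiplicity Hilbert space $M_\pi$; then $\V_\pi^\infty=V_\pi^\infty\otimes M_\pi$ and $\pi(u)$ on $\V_\pi^\infty$ coincides with $\lambda_{\omega(u),\pi}\,\mathrm{Id}$. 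Since $u$ is hermitian and central, Segal's theorem ensures that $\overline{\pi(u)}$ is self-adjoint, so $\lambda_{\omega(u),\pi}\in\R$ and $\overline{\pi(u)}=\lambda_{\omega(u),\pi}\,\mathrm{Id}_{\V_\pi}$ on all of $\V_\pi$.

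Second, for $\zeta\in\C\setminus\R$ we have $\zeta\notin\sigma(\overline{\omega(u)})\subset\R$, so the formula
\[
R_{\omega(u)}(\zeta)v=\int_{\widehat{\E}}^\oplus (\overline{\pi(u)}-\zeta)^{-1}v_\pi\, d\mu(\pi)
\]
established right after Corollary \ref{cor:Arnal} applies. Substituting the scalar description of $\overline{\pi(u)}$ gives $(\overline{\pi(u)}-\zeta)^{-1}=(\lambda_{\omega(u),\pi}-\zeta)^{-1}\mathrm{Id}_{\V_\pi}$, which is well-defined since $\lambda_{\omega(u),\pi}\in\R$ and $\zeta\notin\R$; inserting this into the direct integral yields the claimed identity.

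The only technical point I expect to require care is the reduction from the irreducible $V_\pi$ to the isotypic component $\V_\pi$, since Segal's lemma needs irreducibility; the required tensor-product description of $\V_\pi$ for type-I direct integrals is standard (see, e.g., \cite[Chapitre II]{Dixmier1969-Hilbert}). Measurability of the scalar field $\pi\mapsto\lambda_{\omega(u),\pi}$ and of $\pi\mapsto(\lambda_{\omega(u),\pi}-\zeta)^{-1}\mathrm{Id}_{\V_\pi}$ is then automatic from the measurable structure underlying the direct integral.
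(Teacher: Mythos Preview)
Your proposal is correct and follows exactly the approach the paper intends: the corollary is stated immediately after Segal's infinitesimal Schur lemma with the phrase ``This yields the following corollary,'' so the paper's own proof is simply the combination of that lemma with the resolvent formula established after Corollary~\ref{cor:Arnal}. You are in fact more careful than the paper, which silently passes from the irreducible $V_\pi$ to the isotypic component $\V_\pi$ without comment, whereas you explicitly justify this via the tensor factorization.
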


Considered as a bounded linear operator on $\V$, the resolvent $R_{\omega(u)}$ cannot be extended across the spectrum of $\overline{\omega(u)}$. However,
restricting $R_{\omega(u)}(\zeta)$ to a dense linear subspace $\Uv$ might allow it. 
More precisely, consider a linear topological space $\Uv$, dense in $\V$, endowed with a locally convex topology that is finer than the one induced from $\V$, and let $\Uv'$ the topological antidual space of $\Uv$, i.e. the set of continuous conjugate-linear functionals on $\Uv$. 
Endow $\Uv'$ with the  weak topology. 
We obtain the 
continuous inclusions
\begin{equation}
\label{rigged-spaces}    
\Uv\subseteq \V\subseteq \Uv'\,,
\end{equation}
where the inclusion $\V\subseteq \Uv'$ 
is the natural one, namely $v\in \V$ is identified with the 
functional $\Uv \ni w \mapsto \inner{v}{w} \in \C$ in $\Uv'$.
A double inclusion as in \eqref{rigged-spaces} is often called a rigged Hilbert space (RHS), also known as an equipped Hilbert space, or a Gelfand triplet. 

We shall also consider the linear dual of $\Uv$, denoted by $\Uv^*$, endowed with the weak topology. 

If $\X$ is a manifold endowed with a regular Borel measure, and $\V=\L^2(\X)$, then we have the antilinear isomorphism
\begin{equation}
\label{conjugation}
\L^2(\X) \ni f \mapsto \overline{f} \in \L^2(\X).
\end{equation}
Suppose that $\Uv=C_c^\infty(\X)$ is the space of compactly supported smooth functions on 
$\X$. 
Then \eqref{conjugation} composed with the inclusion $\V=\L^2(\X) \subseteq \Uv'$ gives a continuous linear embedding of $\L^2(\X)$ into $C_c^\infty(\X)^*$. We obtain then the usual construction from distribution theory
\[
C_c^\infty(\X) \subseteq \L^2(\X) \subseteq C_c^\infty(\X)^*\,.
\]
Let us suppose that we are in this case, i.e. $\omega$ is a unitary representation of $\E$ on $\L^2(\X)$. We also suppose that $\omega(u)$ has spectrum equal to $[0,+\infty)$.  We are then in a situation resembling the one of the introduction: considering the resolvent $R_{\omega(u)}$ as an operator 
\[
\C^+ \in z \longrightarrow {\rm Hom}(C_c^\infty(\X), C_c^\infty(\X)^*)\,,
\]
it might admit a holomorphic or meromorphic extension across $\R$ (possibly to a Riemann surface). 
If the extension is meromorphic, then the poles of the meromorphically extended resolvent are the 
resonances of the operator $\omega(u)$.

Notice that this might not be the most general setting one can consider: it is the one suggested by the examples presented in the introduction, in particular the case of the Laplacian on Riemannian symmetric cases of the noncompact type. Moreover, even the choice of $C_c^\infty(\X)$
is not canonical, but convenient to apply Paley-Wiener type theorems.

Suppose now that $u\in \mathcal{U}(\mathfrak{e})^\E$. Then, for every $z\in \C^+$, the resolvent $R_{\omega(u)}(z)$ intertwines the action of $\E$ via $\omega$ on $C_c^\infty(\X)$ and the extended action (also called $\omega$) on $C_c^\infty(\X)^*$. 
Assume that the resolvent of $\omega(u)$ extends meromorphically across $\R$ and that $z_0$ is a resonance. The same happens for every meromorphic continuation of the resolvent. 

The operator
\[
C_c^\infty(\X) \in v \longrightarrow \Res_{z=z_0} R_{\omega(u)} v \in C_c^\infty(\X)^*
\]
is called the residue operator at $z_0$. 
Since $\omega$ is a representation (and hence strongly continuous)
\[
\omega(g)\circ \Res_{z=z_0} R_{\omega(u)}  = \Res_{z=z_0} \left( \omega(g)\circ R_{\omega(u)} 
\right) \qquad (g\in \E)\,.
\]
The group $\E$ therefore acts on the range of the residue operator. So this range is an $\E$-module, called the residue representation. These are the objects we are studying in this article. 

\section{The pair $(\Og_{1,1},\Sp_2(\R))$}
\label{sectionO11-Sp2R}
\subsection{Action of the groups}
The group $\Og_{1,1}$ is the subgroup of $\GL_2(\R)$ generated by $\SOg_{1,1}$ and the element
\begin{equation}
\label{element s}
s=\begin{pmatrix}
0 & 1\\
1 & 0
\end{pmatrix}\,,
\end{equation}
where $\SOg_{1,1}$ is realized as the group of all matrices of the form
\[
h_a=\begin{pmatrix}
a & 0\\
0 & a^{-1}
\end{pmatrix}
\qquad (a\in\R^\times)\,.
\]
Then
\[
s^2=1\,,\qquad sh_as^{-1}=h_{a^{-1}}\qquad (a\in\R^\times)\,.
\]
The group structure of $\SOg_{1,1}$ together with this last relation determines the group structure of $\Og_{1,1}$.

Identify 
\[
\SOg_{1,1}\ni h_a \equiv a\in\R^\times\,.
\]
 The unitary dual $\widehat{\SOg_{1,1}}$ of $\SOg_{1,1}\equiv \R^\times \equiv \R_{>0}\times \Zb/2\Zb$ consists of the characters $\chi_{\varepsilon,\lambda}$ with $\varepsilon\in \{0,1\}$ and $\lambda\in\R$, where 
\[
\chi_{\varepsilon,\lambda}(h_a)=|a|^{i\lambda}\left(\frac{a}{|a|}\right)^\varepsilon \qquad (a\in\R^\times)\,. 
\]
For $\lambda>0$, set $\pi_{\varepsilon,\lambda}=\Ind_{\SOg_{1,1}}^{\Og_{1,1}} \chi_{\varepsilon,\lambda}$. This is the two-dimensional irreducible unitary representation of $\Og_{1,1}=\SOg_{1,1}\sqcup s\SOg_{1,1}$
determined by 
\begin{align*}
&\pi_{\varepsilon,\lambda}(h_a)=\left(\frac{a}{|a|}\right)^\varepsilon \begin{pmatrix}
|a|^{i\lambda} & 0 \\
0 & |a|^{-i\lambda}
\end{pmatrix} \qquad (a\in \R^\times)\,\\
&\pi_{\varepsilon,\lambda}(s)=s\,.
\end{align*}
Choosing $\varepsilon,\delta\in \{0,1\}$, one obtains four one-dimensional unitary representations of $\Og_{1,1}$ 
by setting 
\[
\pi_{0;\varepsilon,\delta}(\eta h_a)=\det(\eta)^\delta \chi_{\varepsilon,0}(h_a)
\qquad (a\in\R^\times,\; \eta\in \{1,s\}).
\]
Notice that $\pi_{0;0,1}(\eta h_a)=\det(\eta)$ is the determinant representation. These representations
exhaust the unitary dual $\widehat{\Og_{1,1}}$ of $\Og_{1,1}$.

Let $\X=M_{1,2}(\R)$ be the space of matrices consisting of one row of length two with real entries. We define an action $\omega_0$ of the group $\SOg_{1,1}$ on $\L^2(\X)$ as follows:
\[
\omega_0(h_a)v(x)=|a|^{-1}v(a^{-1}x) \qquad (a\in\R^\times\,,\ v\in \L^2(\X)\,,\ x\in\X)\,.
\]
It is easy to check that this action preserves the $\L^2$-norm.
Also, let
\begin{equation}\label{symplesticFourierTransform}
\omega_0(s)v(x')=\int_\X e^{-2\pi i x'jx^t}\,v(x)\,dx \qquad (v\in \L^2(\X)\,,\ x'\in\X)\,,
\end{equation}
where 
\begin{equation}
\label{J}
j=\begin{pmatrix}
0 & 1\\
-1 & 0
\end{pmatrix}\,.
\end{equation}
Then
\begin{equation}\label{symplecticFourierTransform}
\omega_0(s)=R(j)\fo=\fo R(j)\,,
\end{equation}
where
\begin{equation}\label{usualFourierTransform}
\fo v(x')=\int_\X e^{-2\pi i x'x^t}\,v(x)\,dx \qquad (v\in \L^2(\X)\,,\ x'\in\X)
\end{equation}
is the usual Fourier transform and 
\[
R(g)v(x)=v(xg)\qquad (g\in\GL_2(\R)\,,\ v\in \L^2(\X)\,,\ x\in\X)\,.
\]
In particular, $\omega_0(s)$ is a unitary operator.
Since $\fo^2=R(-1)$ we see that
\[
\omega_0(s)^2=R(j) \fo\fo R(j)=R(j) R(-1) R(j)=I\,.
\]
Furthermore, a straightforward computation shows that
\begin{equation}\label{omegasas-1}
\omega_0(s)\omega_0(h_a)\omega_0(s)^{-1}=\omega_0(h_{a^{-1}}) \qquad (a\in\R^\times)\,.
\end{equation}
Therefore the above formulas define a unitary representation $(\omega_0,\L^2(\X))$ of the group $\Og_{1,1}$. The group $\Sp_2(\R)=\SL_2(\R)$ acts on $\L^2(\X)$ via the right translations $R$:
\begin{equation}\label{actionofSp}
\omega_0(g)v(x)=v(xg)\qquad (g\in\Sp_2(\R)\,,\ v\in \L^2(\X)\,,\ x\in\X)\,.
\end{equation}
This action is unitary and the two actions commute. Thus $(\omega_0,\L^2(\X))$ may be viewed as a unitary representation of the group $\Og_{1,1}\times\Sp_2(\R)$, where we identify $\Og_{1,1}=\Og_{1,1}\times\{1\}$ and $\{1\}\times\Sp_2(\R)=\Sp_2(\R)$.

\subsection{The Casimir elements and the Capelli operators}
Let 
\[
h=
\begin{pmatrix}
1 & 0\\
0 & -1
\end{pmatrix}\,.
\]
Then the Lie algebra of $\SOg_{1,1}$ is $\mathfrak s\mathfrak o_{1,1}=\R h$. By taking the derivative along one parameter subgroups at the origin, see \eqref{differentiation}, we see that
\[
\omega_0(h)=-x\partial_x-y\partial_y-1\,,
\]
where we denote a typical element of $\X$ by $(x,y)$.
Moreover, let 
\begin{equation}
\label{hee}
e^+=\left(
\begin{array}{llll}
0 & 1\\
0 & 0
\end{array}
\right)
\,,\ \ \ 
e^-=\left(
\begin{array}{llll}
0 & 0\\
1 & 0
\end{array}
\right)
\,.
\end{equation}
Then $\mathfrak s\mathfrak p_2(\R)=\R h+\R e^+ +\R e^-$ and $\Ca'=h^2-2h+4e^+e^-\in \U(\mathfrak s\mathfrak p_2(\R))$ is the Casimir element. Also, one may think of $\mathcal C=h^2$ as a Casimir element in $\U(\mathfrak o_{1,1})$. A straightforward computation shows that
\begin{equation}\label{Capelli1}
\omega_0(\mathcal C)=(x\partial_x+y\partial_y+1)^2=\omega_0(\mathcal C')+1\,.
\end{equation}
This is one of Capelli's identities. (For a general story, see \cite{HoweUmeda}.) 
Set
\begin{equation}\label{PositiveCapelli1}
\mathcal C^+=-(x\partial_x+y\partial_y+1)^2
\end{equation}
Notice that 
$\mathcal{E}=x\partial_x+y\partial_y$ is the Euler operator, with formal adjoint $\mathcal{E}^*=-\mathcal{E}-2$. So $\mathcal C^+=(\mathcal{E}+1)^*(\mathcal{E}+1)$ is self-adjoint and positive. 
We would like to think of $\mathcal C^+$ as of ``the positive Capelli operator''. The Schwartz space $\mathcal{S}(\X)$ is an $\Og_{1,1}\times
\Sp_2(\R)$-invariant dense subspace of the space of smooth vectors of 
the representation $\omega_0$ of $\Og_{1,1}\times \Sp_2(\R)$.
\subsection{Direct integral decomposition of the restriction of 
$(\omega_0, \L^2(\X))$ to $\Og_{1,1}$}
\begin{lem}\label{Plancherel1}
For $\lambda\in\C$ and $v\in C_c^\infty(\X\setminus\{0\})$ define
\begin{equation}\label{Plancherel1.1}
v_\lambda(w)=\int_{\R_{>0}} a^{-1-i\lambda}v(a^{-1}w)\,d^\times a \qquad (w\in \X\setminus \{0\})\,,
\end{equation}
where $d^\times a=\frac{da}{a}$ is the Haar measure on the multiplicative group $\R_{>0}$. Then $v_\lambda$ 
is a homogeneous function of degree $-1-i\lambda$,
that is $v_\lambda(tw)=t^{-1-i\lambda}v_\lambda(w)$ for all $t>0$ and $w\in \X\setminus\{0\}$. For fixed $w$, $v_\lambda(w)$ is an entire function of Paley-Wiener type in $\lambda\in\C$. Moreover 
\begin{equation}\label{Plancherel1.2}
v(w)=\frac{1}{2\pi}\int_\R v_\lambda(w)\,d\lambda
\end{equation}
and
\begin{equation}\label{Plancherel1.3}
\int_\X u(x)\overline{v(x)}\,dx=\frac{1}{2\pi}\int_\R\int_{S^1} u_\lambda(\sigma)\overline{v_\lambda(\sigma)}\,d\sigma\,d\lambda \qquad (u, v\in C_c^\infty(\X\setminus\{0\}))\,,
\end{equation}
where $S^1\subseteq \X$ is the unit circle centered at the origin and $d\sigma$ is the rotation invariant measure on $S^1$ normalized so that the total length of $S^1$ is $2\pi$. 
\end{lem}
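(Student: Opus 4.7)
The plan is to reduce everything to Fourier analysis on the line via the substitution $a = e^{-t}$, which turns the Mellin transform in \eqref{Plancherel1.1} into an ordinary Fourier transform. Once that is done, parts (1)--(4) become standard Fourier-theoretic consequences (homogeneity, Paley-Wiener, Fourier inversion, and Parseval). Polar coordinates $x = r\sigma$, with $r > 0$, $\sigma \in S^1$, are the natural coordinates on $\X\setminus\{0\}$.

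First I would verify homogeneity by the change of variable $a \mapsto ta$ in \eqref{Plancherel1.1}, using the invariance of $d^\times a$ under dilations:
\[
v_\lambda(tw) = \int_{\R_{>0}} a^{-1-i\lambda}\, v(a^{-1} tw)\, d^\times a = t^{-1-i\lambda} v_\lambda(w).
\]
Next, for fixed $w\ne 0$, substitute $a = e^{-t}$ to write
\[
v_\lambda(w) = \int_\R e^{t(1+i\lambda)}\, v(e^{t} w)\, dt = \int_\R e^{-i\lambda t} \phi_w(t)\, dt = \widehat{\phi_w}(\lambda),
\]
where $\phi_w(t) = e^{t}\, v(e^{t}w)$. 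Since $v\in C_c^\infty(\X\setminus\{0\})$, the function $\phi_w$ is smooth and compactly supported in $t$, with support contained in a bounded interval depending only on $|w|$ and on the annulus containing $\supp v$. The classical Paley-Wiener theorem then gives that $v_\lambda(w)$ is an entire function of $\lambda$ of exponential type, which is \eqref{Plancherel1.2}'s analytic content. Note that when $w=0$, the definition gives $v_\lambda(0)=0$ because $0\notin\supp v$.

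For the inversion formula \eqref{Plancherel1.2}, I would apply Fourier inversion to $\phi_\sigma$ at the point $t_0 = -\log r$ (where $w = r\sigma$):
\[
\phi_\sigma(-\log r) = r\, v(r\sigma) = \frac{1}{2\pi}\int_\R r^{-i\lambda} v_\lambda(\sigma)\, d\lambda,
\]
and then use the already established homogeneity $v_\lambda(r\sigma) = r^{-1-i\lambda} v_\lambda(\sigma)$ to rewrite the right-hand side as $r \cdot \frac{1}{2\pi}\int_\R v_\lambda(r\sigma)\, d\lambda$, which yields \eqref{Plancherel1.2}.

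Finally, for the Plancherel identity \eqref{Plancherel1.3}, I would pass to polar coordinates $dx = r\,dr\,d\sigma$ and on each radial line apply Parseval. Concretely, with $r=e^{-t}$,
\[
\int_{\R_{>0}} u(r\sigma)\overline{v(r\sigma)}\, r\,dr = \int_\R \phi_u(t,\sigma)\, \overline{\phi_v(t,\sigma)}\, dt,
\]
where $\phi_u(t,\sigma) = e^{-t}u(e^{-t}\sigma)$ (and similarly for $v$). Parseval's identity for the Fourier transform on $\R$ then gives
\[
\int_\R \phi_u(t,\sigma)\, \overline{\phi_v(t,\sigma)}\, dt = \frac{1}{2\pi}\int_\R u_\lambda(\sigma)\, \overline{v_\lambda(\sigma)}\, d\lambda,
\]
and integrating over $\sigma$ and using Fubini (justified because $u,v\in C_c^\infty(\X\setminus\{0\})$ so all integrals are over compact sets for each fixed $\sigma$) produces \eqref{Plancherel1.3}. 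The only point requiring some care is the uniformity in $\sigma$ needed for Fubini and for the Paley-Wiener estimate on $v_\lambda(\sigma)$; this follows from the compactness of $S^1$ together with the fact that $\supp v$ is contained in an annulus bounded away from $0$ and $\infty$.
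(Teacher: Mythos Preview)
Your approach is correct and is essentially the same as the paper's: both reduce everything to Mellin/Fourier analysis on the radial variable, the paper doing it somewhat more tersely via the distributional identity $\frac{1}{2\pi}\int_\R a^{-i\lambda}\,d\lambda=\delta_1(a)$ on $\R_{>0}$, while you make the substitution $a=e^{-t}$ explicit and invoke the classical Paley--Wiener, inversion, and Parseval theorems on $\R$. A couple of harmless slips to clean up: with $a=e^{-t}$ one gets $v_\lambda(w)=\int_\R e^{i\lambda t}\phi_w(t)\,dt$ (not $e^{-i\lambda t}$), and correspondingly the inversion point should be $t_0=\log r$; also note that your two auxiliary functions $\phi_w(t)=e^{t}v(e^{t}w)$ and $\phi_v(t,\sigma)=e^{-t}v(e^{-t}\sigma)$ differ by $t\mapsto -t$, which is fine for Parseval but worth stating so the reader is not confused.
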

\begin{prf}
The first two claims are immediate by change of variables and because, for a fixed $w\in\X\setminus\{0\}$, the function $\R_{>0}\ni a\mapsto a^{-1}v(a^{-1}w)\in\C$ is smooth and compactly supported. The right-hand side of \eqref{Plancherel1.2}  is equal to
\begin{equation*}
\frac{1}{2\pi}\int_{\R_{>0}}a^{-1}\int_\R a^{-i\lambda}\,d\lambda\, v(a^{-1}w)\,d^\times  a
=\int_{\R_{>0}}a^{-1} \delta_1(a)\, v(a^{-1}w)\,d^\times  a
= v(w)\,.
\end{equation*}
The right-hand side of \eqref{Plancherel1.3}  is equal to
\begin{align*}
\frac{1}{2\pi}\int_{\R}\int_{S^1}& u_\lambda(\sigma)\overline{v_\lambda(\sigma)}\,d\sigma\,d\lambda\\
&=\frac{1}{2\pi}\int_{S^1}\int_{\R}\int_{\R_{>0}}\int_{\R_{>0}} a^{-1-i\lambda}b^{-1+i\lambda} u(a^{-1}\sigma) \overline{v(b^{-1}\sigma)}\,d^\times b\,d^\times a\,d\lambda\,d\sigma\\
&=\int_{S^1}\int_{\R_{>0}}\int_{\R_{>0}} \delta_1(ab^{-1})a^{-1}b^{-1} u(a^{-1}\sigma) \overline{v(b^{-1}\sigma)}\,d^\times b\,d^\times a\,d\sigma\\
&=\int_{S^1}\int_{\R_{>0}} a^{-2} u(a^{-1}\sigma) \overline{v(a^{-1}\sigma)}\,d^\times a\,d\sigma\\
&=\int_{S^1}\int_{\R_{>0}} a^{2} u(a\sigma) \overline{v(a\sigma)}\,d^\times a\,d\sigma\,,
\end{align*}
which coincides with the left-hand side.
\end{prf}
\begin{lem}\label{Plancherel2}
For $\lambda\in \C$ let $C_{\lambda}^\infty(\X\setminus\{0\})\subseteq C^\infty(\X\setminus\{0\})$ denote the subspace of functions homogeneous of degree $-1-i\lambda$. Then \eqref{Plancherel1.1} defines a continuous surjective map,
\begin{equation}\label{Plancherel2.1}
C_c^\infty(\X\setminus\{0\})\ni v\mapsto v_\lambda\in C_{\lambda}^\infty(\X\setminus\{0\})\,.
\end{equation}
Furthermore,
\begin{equation}\label{Plancherel2.2}
\int_\X v(w)u(w)\,dw=\frac{1}{2\pi}\int_\R\int_{S^1}v_\lambda(\sigma)u_{-\lambda}(\sigma)\,d\sigma\,d\lambda \qquad (u,v\in C_c^\infty(\X\setminus\{0\}).
\end{equation}
\end{lem}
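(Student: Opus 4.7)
The plan is to treat the map-theoretic assertions (well-definedness, continuity, surjectivity) and the pairing identity \eqref{Plancherel2.2} separately.

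For the map, well-definedness is already essentially contained in Lemma \ref{Plancherel1}: that lemma shows that for every $v \in C_c^\infty(\X \setminus \{0\})$, $v_\lambda$ is smooth on $\X \setminus \{0\}$ and homogeneous of degree $-1-i\lambda$, hence lies in $C_\lambda^\infty(\X \setminus \{0\})$. Continuity with respect to the LF topology on $C_c^\infty(\X\setminus\{0\})$ and the Fréchet topology inherited from $C^\infty(\X\setminus\{0\})$ is a routine application of differentiation under the integral sign in \eqref{Plancherel1.1}: for $v$ supported in a fixed compact $K \subset \X\setminus\{0\}$ and $w$ ranging over a fixed compact set, the set of $a \in \R_{>0}$ with $a^{-1}w \in K$ is compact and depends only on $K$ and on the range of $w$, so all derivatives of $(v_n)_\lambda$ converge uniformly on compacts whenever the $v_n$ converge in $C_K^\infty$.

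For surjectivity, I would use polar coordinates to identify $\X \setminus \{0\}$ with $\R_{>0} \times S^1$ via $w = r\sigma$. Given $f \in C_\lambda^\infty(\X \setminus \{0\})$, pick $\psi \in C_c^\infty(\R_{>0})$ normalized so that $\int_0^\infty b^{1+i\lambda}\psi(b)\,d^\times b = 1$; such a $\psi$ exists for every $\lambda\in\C$, for instance by starting with any nonnegative bump $\chi \in C_c^\infty(\R_{>0})$ and setting $\psi(b) = c\, b^{-i\lambda}\chi(b)$ with $c = \bigl(\int_0^\infty b\,\chi(b)\,d^\times b\bigr)^{-1}$. Define $v(r\sigma) := \psi(r) f(\sigma)$; this is smooth and compactly supported in $\X\setminus\{0\}$ since $\supp\psi$ is compact in $\R_{>0}$. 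The substitution $b = r'/a$ in \eqref{Plancherel1.1} then yields
\[
v_\lambda(r'\sigma) = f(\sigma) \int_0^\infty (r'/b)^{-1-i\lambda} \psi(b)\,d^\times b = (r')^{-1-i\lambda} f(\sigma) = f(r'\sigma),
\]
so $v_\lambda = f$.

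For the identity \eqref{Plancherel2.2}, rather than redo a Mellin-inversion computation, I would derive it directly from \eqref{Plancherel1.3} via a conjugation trick. For $v \in C_c^\infty(\X\setminus\{0\})$ and real $\lambda$, direct inspection of \eqref{Plancherel1.1} gives the identity $\overline{\overline{v}_\lambda(\sigma)} = v_{-\lambda}(\sigma)$. Substituting $\overline{v}$ in place of $v$ in \eqref{Plancherel1.3}---which is permitted because $\overline{v} \in C_c^\infty(\X\setminus\{0\})$ whenever $v$ is---then produces
\[
\int_\X u(x) v(x)\,dx = \frac{1}{2\pi}\int_\R \int_{S^1} u_\lambda(\sigma) v_{-\lambda}(\sigma)\,d\sigma\,d\lambda,
\]
which coincides with \eqref{Plancherel2.2} after swapping $u$ and $v$ (legitimate because $u(x)v(x) = v(x)u(x)$). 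I do not expect any serious obstacle: the surjectivity step contains the only genuine constructive idea, and the choice of a Mellin-normalized $\psi$ is the cleanest way to supply it.
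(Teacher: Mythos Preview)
Your proof is correct. For the pairing identity \eqref{Plancherel2.2} you use exactly the same conjugation trick as the paper: write $u=\overline{\overline u}$ (equivalently, replace $v$ by $\overline v$) in \eqref{Plancherel1.3} and use $\overline{\overline{u}_\lambda}=u_{-\lambda}$ for real $\lambda$.

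The only difference is in how continuity and surjectivity of \eqref{Plancherel2.1} are handled. The paper simply cites \cite[(3.2.21)--(3.2.23)]{Hormander} for both, whereas you give a direct elementary argument: continuity by differentiation under the integral with a uniform bound on the $a$-support, and surjectivity by the explicit Mellin-normalized preimage $v(r\sigma)=\psi(r)f(\sigma)$ with $\int_0^\infty b^{1+i\lambda}\psi(b)\,d^\times b=1$. Your construction is self-contained and makes the surjectivity transparent; the paper's citation is shorter but leaves the mechanism inside H\"ormander. Both are fine, and in fact the argument behind the cited formulas in H\"ormander is essentially the same bump-function trick you wrote down.
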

\begin{proof}
The continuity and the surjectivity of \eqref{Plancherel2.1} follow from \cite[(3.2.21)-(3.2.23)]{Hormander}. The last equation is a straightforward consequence of \eqref{Plancherel1.3}: 
\begin{align*}
\int_\X v(w)u(w)\,dw&=\int_\X v(w)\overline{\overline{u}(w)}\,dw
=\frac{1}{2\pi}\int_\R\int_{S^1} v_\lambda(\sigma)\overline{\overline{u}_\lambda(\sigma)}\,d\sigma\,d\lambda\\
&=\frac{1}{2\pi}\int_\R\int_{S^1} v_\lambda(\sigma)u_{-\lambda}(\sigma)\,d\sigma\,d\lambda \qquad (u, v\in C_c^\infty(\X\setminus\{0\}))\,.
\end{align*}
\end{proof}
\begin{cor}\label{Plancherel3}
For $\lambda\in\C$ let $\L^2_\lambda(\X)$ denote the closure of $C_{\lambda}^\infty(\X\setminus\{0\})$ with respect to the $L^2$-norm on $S^1$:
\begin{equation}
\label{inner-lambda}
\| v_\lambda\|_\lambda=\left(\int_{S^1} |v_\lambda(\sigma)|^2\,d\sigma\right)^{1/2}\qquad (v_\lambda\in C_{\lambda}^\infty(\X\setminus\{0\}))\,.
\end{equation}
Then
\begin{equation}\label{Plancherel3.0}
\L^2(\X)=\frac{1}{2\pi}\int_\R^\oplus \L^2_\lambda(\X)\,d\lambda
\end{equation}
is the decomposition of the Hilbert space $\L^2(\X)$ into the direct integral of the Hilbert spaces $\L^2_\lambda(\X)$ with the Plancherel measure $\frac{1}{2\pi}\,d\lambda$.
\end{cor}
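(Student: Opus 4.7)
The plan is to establish a unitary isomorphism between $\L^2(\X)$ and $\frac{1}{2\pi}\int_\R^\oplus \L^2_\lambda(\X)\,d\lambda$ by extending the map $v \mapsto (v_\lambda)_{\lambda\in\R}$ from Lemma \ref{Plancherel1}.

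First I would identify each $\L^2_\lambda(\X)$ isometrically with $\L^2(S^1)$ via restriction to $S^1$. This is a bijection on $C_\lambda^\infty(\X\setminus\{0\})$, with inverse $f \mapsto \widetilde f$ where $\widetilde f(a\sigma) = a^{-1-i\lambda}f(\sigma)$ for $a>0$ and $\sigma \in S^1$, and it is isometric by the very definition \eqref{inner-lambda} of the norm on $\L^2_\lambda(\X)$. Since $C^\infty(S^1)$ is dense in $\L^2(S^1)$, the closure $\L^2_\lambda(\X)$ is carried onto all of $\L^2(S^1)$. Under this identification the family $\{\L^2_\lambda(\X)\}_{\lambda\in\R}$ becomes a constant measurable field, and the direct integral $\frac{1}{2\pi}\int_\R^\oplus \L^2_\lambda(\X)\,d\lambda$ is canonically isomorphic to $\L^2(\R \times S^1,\,\tfrac{d\lambda}{2\pi}\otimes d\sigma)$.

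Next, I would define $\Phi: C_c^\infty(\X\setminus\{0\}) \to \L^2(\R \times S^1,\,\tfrac{d\lambda}{2\pi}\otimes d\sigma)$ by $\Phi(v)(\lambda,\sigma) = v_\lambda(\sigma)$. The Parseval-type identity \eqref{Plancherel1.3} (taking $u=v$) gives $\|v\|_{\L^2(\X)}^2 = \frac{1}{2\pi}\int_\R \|v_\lambda\|_\lambda^2\,d\lambda$, so $\Phi$ is an isometry. Because $\{0\}$ has Lebesgue measure zero in $\X$, the space $C_c^\infty(\X\setminus\{0\})$ is dense in $\L^2(\X)$, and $\Phi$ extends uniquely to an isometric linear map $\overline{\Phi}: \L^2(\X) \to \L^2(\R \times S^1,\,\tfrac{d\lambda}{2\pi}\otimes d\sigma)$.

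It remains to prove surjectivity of $\overline{\Phi}$, which I view as the main obstacle. Using polar coordinates $w = a\sigma$ with $a>0$ and $\sigma \in S^1$ (under which $dw = a^2\,d^\times a \otimes d\sigma$), the formula \eqref{Plancherel1.1} exhibits $\sigma \mapsto v_\lambda(\sigma)$ as the Mellin transform in $a$ of the smooth compactly supported function $a \mapsto a^{-1}v(a^{-1}\sigma)$. After the substitution $a = e^t$ this Mellin transform becomes the ordinary Fourier transform on $\R$, which is a unitary isomorphism $\L^2(\R_{>0},d^\times a) \to \L^2(\R,\tfrac{d\lambda}{2\pi})$. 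Combined with the density of $C_c^\infty(\R_{>0}) \otimes C^\infty(S^1)$ in $\L^2(\R_{>0}\times S^1,\,d^\times a \otimes d\sigma)$, this shows that the image of $\Phi$ is dense in $\L^2(\R \times S^1,\,\tfrac{d\lambda}{2\pi}\otimes d\sigma)$; hence $\overline{\Phi}$ is onto, and it realises the asserted direct integral decomposition \eqref{Plancherel3.0}. The delicate bookkeeping is to verify that the polar decomposition of $dw$ matches the Mellin-transform Plancherel identity in the exact normalisation already confirmed by the computation at the end of the proof of Lemma \ref{Plancherel1}.
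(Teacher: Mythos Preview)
Your proof is correct. The paper states this result as a corollary with no proof, treating it as an immediate consequence of the Parseval identity \eqref{Plancherel1.3} in Lemma \ref{Plancherel1}; you have supplied exactly the standard details that make this precise, namely the identification of each $\L^2_\lambda(\X)$ with $\L^2(S^1)$, the isometry of $\Phi$ via \eqref{Plancherel1.3}, and surjectivity through the Mellin/Fourier interpretation in the radial variable.

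One small streamlining for the surjectivity step: rather than arguing density of the image of $\Phi$, you can note directly that $\overline{\Phi}$ factors as the composition of two unitaries, the polar-coordinate isomorphism $\L^2(\X)\to \L^2(\R_{>0}\times S^1,\,d^\times a\otimes d\sigma)$ given by $v\mapsto \big((a,\sigma)\mapsto a\,v(a\sigma)\big)$, followed by the Mellin transform in the first variable. Since both factors are surjective isometries, so is $\overline{\Phi}$, and the density bookkeeping becomes unnecessary.
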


Notice that every element of $\L^2_\lambda(\X)$ can be written in the form 
$v_\lambda(w)=r^{-1-i\lambda}f(\sigma)$ where $w=r\sigma$ with  $(r,\sigma)\in \R_{>0}\times S^1$ and $f\in \L^2(S^1)$.

The transformation \eqref{Plancherel1.1} maps odd functions to odd functions and even functions to even functions. 

For each $\lambda\in\R$, let $\L^2_{0,\lambda}(\X)\subseteq \L^2_\lambda(\X)$ be the subspace of even functions and let $\L^2_{1,\lambda}(\X)\subseteq \L^2_\lambda(\X)$ be the subspace of odd functions. 
Then 
\[
\L^2(\X)=\frac{1}{2\pi}\int_\R^\oplus \left(\L^2_{0,\lambda}(\X)\oplus \L^2_{1,\lambda}(\X)\right)\,d\lambda\,.
\]
Each $v_\lambda\in C_{\lambda}^\infty(\X\setminus\{0\})$ is a homogeneous function of degree $-1-i\lambda$. Hence it extends uniquely to a homogeneous, and hence tempered distribution on $\X$, see \cite[Theorem 3.2.3 and 7.1.18]{Hormander}. We write $v_\lambda=v_{0,\lambda}+v_{1,\lambda}$ for the decomposition of $v_\lambda\in \L^2_\lambda(\X)=\L^2_{0,\lambda}(\X)\oplus \L^2_{1,\lambda}(\X)$. 
From now on (in this section) we view $\L^2_{\lambda}(\X)$ as a subspace of the tempered distributions $\Ss^*(\X)$,
\[
\L^2_{\lambda}(\X)\subseteq \Ss^*(\X)\,
\] 
and extend the action $\omega_0$ of $\Og_{1,1}$ 
to $\L^2_{\lambda}(\X)\subseteq\Ss^*(\X)$ by dualizing the action 
on $\Ss(\X)\subseteq \L^2(\X)$, that is
\[
(\omega_0(g)v_\lambda)(u)=v_\lambda(\omega_0(g^{-1})u) \qquad (g\in \Og_{1,1}\,, v_\lambda\in \L^2_{\lambda}(\X)\,, u\in \Ss(\X))\,.
\]
The reason is that we want to apply the Fourier transform $\omega_0(s)$, see \eqref{symplesticFourierTransform}, to elements of $\L^2_{\lambda}(\X)$.

In particular, the Fourier transform of $v_\lambda\in\L^2_\lambda(\X)$ is homogeneous of degree $-1+i\lambda$, see \cite[Theorem 7.1.24]{Hormander}. 
Hence, for $\varepsilon\in \{0,1\}$,
\[
\omega_0(s):\L^2_{\varepsilon,\lambda}(\X)\to \L^2_{\varepsilon,-\lambda}(\X)\,.
\]
The spaces $\L^2_{\varepsilon,\lambda}(\X)$ are isotypic for the action of $\SOg_{1,1}$ via $\omega_0$, 
as can be seen from the formulas
\begin{equation}
\label{omegaavlambdapm}
\omega_0(h_a) v_{0,\lambda}= |a|^{i\lambda} v_{0,\lambda}\,,\ \ \ \omega_0(h_a) v_{1,\lambda}= |a|^{i\lambda}\frac{a}{|a|} v_{1,\lambda}
\qquad (a\in\R^\times\,, \ v_{\varepsilon,\lambda}\in \L^2_{\varepsilon, \lambda}(\X))\,.
\end{equation}
Hence $\L^2_{\varepsilon, \lambda}(\X)\oplus \L^2_{\varepsilon,-\lambda}(\X)$ is preserved under the action of $\Og_{1,1}$. If $\lambda>0$, then this representation is isotypic, direct integral of a single 2-dimensional irreducible representation, which we denote by $(\pi_{\varepsilon,\lambda}, \V_{\varepsilon,\lambda})$. Indeed, fix $v_{\varepsilon,\lambda}\in \L^2(\X)_{\varepsilon,\lambda}$ and set $\V_{\varepsilon,\lambda}=\C v_{\varepsilon,\lambda} \oplus \C v_{\varepsilon,-\lambda}$, where $v_{\varepsilon,-\lambda}=\omega_0(s)v_{\varepsilon, \lambda}$.
Let $\mathcal{B}_{\varepsilon,\lambda}=\{v_{\varepsilon,\lambda},v_{\varepsilon,-\lambda}\}$. Then the matrix of $\omega_0(s)|_{\V_{\varepsilon,\lambda}}$ with respect to $\mathcal{B}_{\varepsilon,\lambda}$ 
is $s$. For $a\in \R^\times$, by \eqref{omegasas-1} and \eqref{omegaavlambdapm}, 
\[
\omega_0(h_a)|_{\V_{0,\lambda}}=\begin{pmatrix}
|a|^{i\lambda} & 0 \\ 0 & |a|^{-i\lambda}
\end{pmatrix} \qquad \textrm{with respect to $\mathcal{B}_{0,\lambda}$}
\]
\[
\omega_0(h_a)|_{\V_{1,\lambda}}=\frac{a}{|a|}
\begin{pmatrix}
|a|^{i\lambda} & 0 \\ 0 & |a|^{-i\lambda}
\end{pmatrix} \qquad \textrm{with respect to $\mathcal{B}_{1,\lambda}$}\,.
\]
Thus 
$\omega_0|_{\V_{0,\lambda}}=\pi_{0,\lambda}$ and 
$\omega_0|_{\V_{1,\lambda}}=\pi_{1,\lambda}$.
We have therefore proved the following corollary. 

\begin{cor}\label{O11decomosition}
For $\varepsilon\in \{0,1\}$ and $\lambda>0$, 
\[
\L^2(\X)_{\pi_{\varepsilon,\lambda}}=\L^2(\X)_{\varepsilon,\lambda} \oplus\L^2(\X)_{\varepsilon,-\lambda}
\]
is an isotypic representation of $\Og_{1,1}$ of type $\pi_{\varepsilon,\lambda}$.

The restriction of the representation $(\omega_0, \L^2(\X))$ to $\Og_{1,1}$ decomposes into direct integral of irreducible unitary representations as follows,
\[
\L^2(\X)=\int_{\widehat{\Og}_{1,1}} \L^2(\X)_\pi\,d\mu(\pi)\,,
\]
where $d\mu(\pi_{\varepsilon,\lambda})=
\frac{d\lambda}{2\pi}$ for $(\varepsilon,\lambda)\in \{0,1\}\times \R_{>0}$, $\mu(\pi_{\varepsilon,0,\delta})=0$ 
for $(\varepsilon,\delta)\in \{0,1\}^2$
and
$\L^2(\X)_\pi$ denotes the isotypic component of type $\pi$.
\end{cor}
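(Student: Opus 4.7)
The plan is to assemble the pieces already established in this section. From Corollary~\ref{Plancherel3} together with the parity splitting $\L^2_\lambda(\X) = \L^2_{0,\lambda}(\X) \oplus \L^2_{1,\lambda}(\X)$ we have the $\Og_{1,1}$-equivariant direct integral
\[
\L^2(\X) \;=\; \frac{1}{2\pi} \int_\R^\oplus \bigoplus_{\varepsilon \in \{0,1\}} \L^2_{\varepsilon,\lambda}(\X)\, d\lambda.
\]
The formulas \eqref{omegaavlambdapm} show that $\SOg_{1,1}$ acts on $\L^2_{\varepsilon,\lambda}(\X)$ by the character $\chi_{\varepsilon,\lambda}$, and the symplectic Fourier transform $\omega_0(s)$ sends a homogeneous function of degree $-1-i\lambda$ to one of degree $-1+i\lambda$ while preserving parity. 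Hence $\omega_0(s)$ exchanges $\L^2_{\varepsilon,\lambda}(\X)$ with $\L^2_{\varepsilon,-\lambda}(\X)$, and the sum $\L^2_{\varepsilon,\lambda}(\X) \oplus \L^2_{\varepsilon,-\lambda}(\X)$ is $\Og_{1,1}$-stable.

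Next, for $\lambda > 0$ and any nonzero $v_{\varepsilon,\lambda} \in \L^2_{\varepsilon,\lambda}(\X)$, set $\V_{\varepsilon,\lambda} = \C v_{\varepsilon,\lambda} \oplus \C\, \omega_0(s)v_{\varepsilon,\lambda}$. The explicit matrix computation carried out just before the statement shows that the $\Og_{1,1}$-representation on $\V_{\varepsilon,\lambda}$ is precisely $\pi_{\varepsilon,\lambda}$. Since $\L^2_{\varepsilon,\lambda}(\X) \oplus \L^2_{\varepsilon,-\lambda}(\X)$ decomposes as a Hilbert direct sum of such $2$-dimensional subspaces, and since the representations $\pi_{\varepsilon,\lambda}$ are pairwise inequivalent for distinct pairs $(\varepsilon,\lambda)$ with $\lambda > 0$, this space is exactly the isotypic component $\L^2(\X)_{\pi_{\varepsilon,\lambda}}$.

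Finally, fold the integral over $\R$ into one over $\R_{>0}$ by pairing $\lambda$ with $-\lambda$; the singleton $\{0\}$ is a null set, so the finitely many one-dimensional representations of $\Og_{1,1}$ acquire zero Plancherel mass. This yields
\[
\frac{1}{2\pi} \int_\R^\oplus \L^2_{\varepsilon,\lambda}(\X)\, d\lambda \;=\; \int_{\R_{>0}}^\oplus \L^2(\X)_{\pi_{\varepsilon,\lambda}}\, \frac{d\lambda}{2\pi},
\]
and summing over $\varepsilon \in \{0,1\}$ one reads off $d\mu(\pi_{\varepsilon,\lambda}) = \frac{d\lambda}{2\pi}$. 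No real obstacle remains, since all the genuine analysis---the homogeneity decomposition of Lemma~\ref{Plancherel1}, the Plancherel identity on $S^1$, and the identification of $\omega_0|_{\V_{\varepsilon,\lambda}}$ with the induced representation $\pi_{\varepsilon,\lambda}$ via the explicit matrices above---has already been performed; the corollary is a direct accounting of measures and isotypic components.
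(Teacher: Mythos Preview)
Your proposal is correct and follows essentially the same approach as the paper: the corollary is presented there as an immediate consequence of the preceding discussion (the direct integral from Corollary~\ref{Plancherel3}, the parity splitting, the formulas~\eqref{omegaavlambdapm}, the action of $\omega_0(s)$ on homogeneous distributions, and the explicit identification of $\omega_0|_{\V_{\varepsilon,\lambda}}$ with $\pi_{\varepsilon,\lambda}$), and your write-up simply organizes these same ingredients, adding the folding of $\int_\R$ into $\int_{\R_{>0}}$ and the null-set remark for $\lambda=0$.
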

\subsection{The resonance}
\label{subsection:resonanceO11SL}
\begin{lem}\label{resonance1}
Recall the densely defined differential operator $\mathcal C^+$ on $\L^2(\X)$, see \eqref{PositiveCapelli1}. For $z\in\C$ with $\Im z>0$ the operator $\mathcal C^+-z^2$ is invertible  with  inverse 
\begin{equation}\label{resonance11}
(\mathcal C^+-z^2)^{-1}:\L^2(\X) \to \L^2(\X)
\end{equation}
given, in terms of \eqref{Plancherel3.0}, by
\[
(\mathcal C^+-z^2)^{-1}\left(\frac{1}{2\pi}\int_\R v_\lambda\,d\lambda\right)=\frac{1}{2\pi}\int_\R (\lambda^2-z^2)^{-1}v_\lambda\,d\lambda\,.
\]
\end{lem}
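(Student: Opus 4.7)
The plan is to reduce the invertibility of $\mathcal{C}^+ - z^2$ to a fiberwise multiplication statement on the direct integral of Corollary \ref{Plancherel3}, then verify boundedness of the multiplier for $\Im z > 0$.

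First, I would observe that $\mathcal{C}^+$ acts as the scalar $\lambda^2$ on each fiber $\L^2_\lambda(\X)$. Indeed, every $v_\lambda \in C^\infty_\lambda(\X\setminus\{0\})$ is homogeneous of degree $-1-i\lambda$, so the Euler operator satisfies $\mathcal{E} v_\lambda = (-1-i\lambda)v_\lambda$, whence $(\mathcal{E}+1)v_\lambda = -i\lambda v_\lambda$ and
\[
\mathcal{C}^+ v_\lambda = -(\mathcal{E}+1)^2 v_\lambda = \lambda^2 v_\lambda.
\]
Combined with Corollary \ref{O11decomosition}, this matches the general picture from Section \ref{section:abstract-resonances}: $\mathcal{C} = h^2$ lies in $\mathcal{Z}(\mathfrak{o}_{1,1})$ and is hermitian, so by Segal's theorem $\overline{\omega_0(\mathcal{C})}$ is self-adjoint and acts by a real scalar on each isotypic component, which here is precisely $-\lambda^2$ (so $\mathcal{C}^+ = -\omega_0(\mathcal{C})$ acts by $\lambda^2$, consistent with positivity).

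Next, I would invoke the corollary at the end of Section \ref{section:abstract-resonances} (the resolvent formula for hermitian central elements on direct integral decompositions) applied to $\mathcal{C}^+$ with eigenvalues $\lambda_{\mathcal{C}^+, \pi_{\varepsilon,\lambda}} = \lambda^2$. This immediately yields
\[
(\mathcal{C}^+ - z^2)^{-1} \Bigl(\frac{1}{2\pi}\int_\R v_\lambda \,d\lambda\Bigr) = \frac{1}{2\pi}\int_\R (\lambda^2 - z^2)^{-1} v_\lambda \,d\lambda
\]
as soon as one knows $z^2 \notin \sigma(\overline{\mathcal{C}^+}) \subseteq [0,+\infty)$.

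Finally, I would verify this spectral condition for $\Im z > 0$. Writing $z = z_1 + iz_2$ with $z_2 > 0$, we have $z^2 = (z_1^2 - z_2^2) + 2iz_1 z_2$. If $z_1 \neq 0$, then $\Im(z^2) \neq 0$, so $z^2 \notin \R$. If $z_1 = 0$, then $z^2 = -z_2^2 < 0$. In either case $z^2 \notin [0,+\infty)$, and moreover $\inf_{\lambda \in \R} |\lambda^2 - z^2| > 0$, so the multiplier $\lambda \mapsto (\lambda^2 - z^2)^{-1}$ is bounded and the displayed formula defines a bounded operator on $\L^2(\X)$.

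The only delicate point is the domain bookkeeping: one must ensure that the fiberwise multiplier output lies in the domain of $\overline{\mathcal{C}^+}$ and that applying $\mathcal{C}^+ - z^2$ recovers the input. This is handled by starting with test vectors $v \in \mathcal{S}(\X)$, using that $\mathcal{S}(\X)$ is an $\Og_{1,1}$-invariant dense subspace of smooth vectors, and invoking Poulsen's theorem so that the closure $\overline{\mathcal{C}^+}$ is independent of the choice of such core. The expected main technical nuisance is justifying the interchange of the direct integral with the unbounded operator $\overline{\mathcal{C}^+}$, which is precisely the content of Corollary \ref{cor:Arnal}.
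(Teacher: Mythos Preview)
Your proposal is correct and follows the same route as the paper: the paper's proof consists of the single line ``This follows from the straightforward fact that $\mathcal{C}^+ v_\lambda = \lambda^2 v_\lambda$,'' and you supply that computation via the Euler operator together with the spectral and domain details the paper leaves implicit.
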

\begin{proof}
This follows from the straightforward fact that $\Ca^+ v_\lambda=\lambda^2 v_\lambda$.
\end{proof}
\begin{pro}\label{resonance00}
If we shrink the domain and expand the range of the map \eqref{resonance11}
\begin{equation}\label{resonance13}
(\mathcal C^+-z^2)^{-1}:C_c^\infty(\X\setminus\{0\}) \to C_c^\infty(\X\setminus\{0\})^*
\end{equation}
by the formula
\begin{equation}\label{resonance14}
((\mathcal C^+-z^2)^{-1} v)(u)=\int_\X ((\mathcal C^+-z^2)^{-1} v)(w) u(w)\,dw \qquad (u,v\in C_c^\infty(\X\setminus\{0\}))\,,
\end{equation}
then \eqref{resonance13} extends from $\Im z>0$ to a meromorphic function of $z\in\C$ with a single simple pole at $z=0$, with residue operator given by
\[
{\Res}_{z=0}((\mathcal C^+-z^2)^{-1} v)
=\frac{i}{2}v_0\,.
\]
Here $v_0$ is viewed as a distribution on $\X\setminus\{0\}$ via integration against $dw$. This distribution extends uniquely to a homogeneous distribution on $\X$.
\end{pro}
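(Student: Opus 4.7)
The plan is to reduce the problem to a one-dimensional Fourier calculation via the direct integral decomposition of Corollary~\ref{Plancherel3}. By Lemma~\ref{resonance1}, the definition \eqref{resonance14}, and the Plancherel-type identity \eqref{Plancherel2.2}, for $u,v\in C_c^\infty(\X\setminus\{0\})$ and $\Im z>0$ we have
\[
((\mathcal{C}^+-z^2)^{-1}v)(u)=\frac{1}{2\pi}\int_\R\frac{F(\lambda)}{\lambda^2-z^2}\,d\lambda,\qquad F(\lambda):=\int_{S^1}v_\lambda(\sigma)\,u_{-\lambda}(\sigma)\,d\sigma,
\]
so the task reduces to meromorphically extending this scalar integral in $z$ and computing its residue at $z=0$.

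The key observation is that $F$ is the Fourier transform of a compactly supported smooth function on $\R$. Substituting $a=e^t$ in \eqref{Plancherel1.1} yields $v_\lambda(\sigma)=\int_\R e^{-i\lambda t}\phi_\sigma(t)\,dt$ and $u_{-\lambda}(\sigma)=\int_\R e^{i\lambda s}\psi_\sigma(s)\,ds$ with $\phi_\sigma(t):=e^{-t}v(e^{-t}\sigma)$ and $\psi_\sigma(s):=e^{-s}u(e^{-s}\sigma)$. Since $\supp u\cup\supp v\subset\{r_1\le|w|\le r_2\}$ for some $0<r_1<r_2$, the functions $\phi_\sigma,\psi_\sigma$ are supported in $[-\log r_2,-\log r_1]$ uniformly in $\sigma\in S^1$. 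Applying the convolution identity $\widehat{\phi_\sigma}(\lambda)\widehat{\psi_\sigma}(-\lambda)=\widehat{\phi_\sigma\ast\check\psi_\sigma}(\lambda)$, with $\check\psi_\sigma(s):=\psi_\sigma(-s)$, and integrating in $\sigma$, one obtains $F=\widehat\Phi$ with
\[
\Phi(r):=\int_{S^1}(\phi_\sigma\ast\check\psi_\sigma)(r)\,d\sigma\in C_c^\infty(\R).
\]

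An elementary residue calculation gives the Green function
\[
G_z(r):=\frac{1}{2\pi}\int_\R\frac{e^{i\lambda r}}{\lambda^2-z^2}\,d\lambda=\frac{i\,e^{iz|r|}}{2z}\qquad(\Im z>0),
\]
and Fubini (applicable since $\Phi$ has compact support and $1/(\lambda^2-z^2)$ is integrable on $\R$) yields
\[
((\mathcal{C}^+-z^2)^{-1}v)(u)=\int_\R\Phi(r)\,G_z(-r)\,dr=\frac{i\,H(z)}{2z},\qquad H(z):=\int_\R\Phi(r)\,e^{iz|r|}\,dr.
\]
Since $\Phi\in C_c^\infty(\R)$, the two pieces $\int_0^\infty\Phi(r)e^{izr}\,dr$ and $\int_{-\infty}^0\Phi(r)e^{-izr}\,dr$ of $H$ are Fourier–Laplace transforms of compactly supported functions, hence entire in $z$; so the pairing extends meromorphically to all of $\C$ with at most a simple pole at $z=0$, whose residue equals $\frac{i}{2}H(0)=\frac{i}{2}\widehat\Phi(0)=\frac{i}{2}F(0)$. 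A polar-coordinate computation using $u_0(\sigma)=\int_0^\infty u(r\sigma)\,dr$ together with the $-1$-homogeneity $v_0(r\sigma)=r^{-1}v_0(\sigma)$ identifies $F(0)=\int_{S^1}v_0(\sigma)u_0(\sigma)\,d\sigma$ with $\int_\X v_0(w)u(w)\,dw$, i.e.\ with the pairing of $v_0$ and $u$ as distribution and test function; uniqueness of the homogeneous extension of $v_0$ to $\X$ is the result already cited after Corollary~\ref{O11decomosition}. The main bookkeeping step is the Fourier identification $F=\widehat\Phi$ together with the compact support of $\Phi$; once that is done, the meromorphic continuation and the residue collapse to the triviality that a compactly supported function has an entire Fourier–Laplace transform.
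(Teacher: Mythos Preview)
Your proof is correct and follows essentially the same strategy as the paper: both reduce, via Lemma~\ref{resonance1} and the Plancherel identity \eqref{Plancherel2.2}, to the scalar integral $\frac{1}{2\pi}\int_\R(\lambda^2-z^2)^{-1}F(\lambda)\,d\lambda$ with $F(\lambda)=\int_{S^1}v_\lambda(\sigma)u_{-\lambda}(\sigma)\,d\sigma$ of Paley--Wiener type, and both then extract the meromorphic continuation and the residue $\frac{i}{2}F(0)$ by elementary complex analysis. The only difference is in packaging: the paper splits $(\lambda^2-z^2)^{-1}$ into partial fractions and shifts each contour by $\pm iN$ using Cauchy's theorem directly, whereas you recognise the integral as the pairing of $\Phi\in C_c^\infty(\R)$ with the explicit one-dimensional Green kernel $G_z(r)=\tfrac{i}{2z}e^{iz|r|}$, so that the entireness of $H(z)=\int_\R\Phi(r)e^{iz|r|}\,dr$ replaces the contour-shift argument. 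Your Green-function formulation is perhaps more transparent from a PDE viewpoint (it is literally the resolvent kernel of $-d^2/dr^2$ on $\R$), while the paper's contour shift makes the role of the Paley--Wiener estimate more visible; analytically the two are equivalent, since computing $G_z$ is itself the same residue calculation.
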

\begin{prf}
The equality \eqref{Plancherel2.2} shows that \eqref{resonance14} means that
\begin{equation}\label{resonance15}
((\mathcal C^+-z^2)^{-1} v)(u)=\frac{1}{2\pi}\int_\R\int_{S^1} (\lambda^2-z^2)^{-1} v_\lambda(\sigma) u_{-\lambda}(\sigma)\,d\sigma\,d\lambda\,.
\end{equation}
Notice that
\[
(\lambda^2-z^2)^{-1}=-\frac{1}{2z}\left(\frac{1}{z-\lambda}+\frac{1}{z+\lambda}\right)\,.
\]
Hence, the right hand side of \eqref{resonance15} is equal to
\begin{equation}\label{resonance16}
-\frac{1}{4\pi z}\left(\int_\R\int_{S^1} \frac{1}{z-\lambda} v_\lambda(\sigma) u_{-\lambda}(\sigma)\,d\sigma\,d\lambda
+\int_\R\int_{S^1} \frac{1}{z+\lambda} v_\lambda(\sigma) u_{-\lambda}(\sigma)\,d\sigma\,d\lambda\right)\,.
\end{equation}
The function in the parenthesis extends to an entire function of $z$. Indeed, since the function 
\[
\C\ni \lambda\mapsto \int_{S^1}  v_\lambda(\sigma) u_{-\lambda}(\sigma)\,d\sigma\in\C
\]
is of Paley-Wiener type, we may pick any $N>0$ and, using Cauchy's theorem, show that 
\begin{align}\label{resonance17}
\int_\R\int_{S^1}& \frac{1}{z-\lambda} v_\lambda(\sigma) u_{-\lambda}(\sigma)\,d\sigma\,d\lambda
+\int_\R\int_{S^1} \frac{1}{z+\lambda} v_\lambda(\sigma) u_{-\lambda}(\sigma)\,d\sigma\,d\lambda\notag\\
&=\int_{\R-iN}\int_{S^1} \frac{1}{z-\lambda} v_\lambda(\sigma) u_{-\lambda}(\sigma)\,d\sigma\,d\lambda
+\int_{\R+iN}\int_{S^1} \frac{1}{z+\lambda} v_\lambda(\sigma) u_{-\lambda}(\sigma)\,d\sigma\,d\lambda\,.
\end{align}
The right hand side of \eqref{resonance17} is a holomorphic function for $\Im z>-N$. Therefore \eqref{resonance16} is a meromorphic function with a unique simple pole at zero. The residue at zero is equal to
\begin{align*}
-\frac{1}{4\pi}&\left(\int_{\R-iN}\int_{S^1} \frac{1}{-\lambda} v_\lambda(\sigma) u_{-\lambda}(\sigma)\,d\sigma\,d\lambda
+\int_{\R+iN}\int_{S^1} \frac{1}{\lambda} v_\lambda(\sigma) u_{-\lambda}(\sigma)\,d\sigma\,d\lambda\right)\notag\\
&=\frac{1}{4\pi}\int_{S^1}\left(\int_{\R-iN} \frac{1}{\lambda} v_\lambda(\sigma) u_{-\lambda}(\sigma)\,d\lambda
-\int_{\R+iN}\frac{1}{\lambda} v_\lambda(\sigma) u_{-\lambda}(\sigma)\,d\lambda\right)\,d\sigma\notag\\
&=\frac{1}{4\pi}\int_{S^1}\int_{|\lambda|=N} \frac{1}{\lambda} v_\lambda(\sigma) u_{-\lambda}(\sigma)\,d\lambda\,d\sigma
=\frac{i}{2}\int_{S^1}v_0(\sigma) u_0(\sigma)\,d\sigma\,, 
\end{align*}
where we used again Cauchy's theorem and the Paley-Wiener property of $v_\lambda$ and $u_{-\lambda}$ (see Lemma \ref{Plancherel1}), and finally Cauchy's integral formula. 
Thus
\[
\Res_{z=0}((\mathcal C^+-z^2)^{-1} v)(u)=\frac{i}{2}\int_{S^1}v_0(\sigma) u_0(\sigma)\,d\sigma
=\frac{i}{2}\int_{\X}v_0(w) u(w)\,dw\,.
\]
\end{prf}

\subsection{The resonance representation}

By Proposition \ref{resonance00}, the resonance space at $\lambda=0$ is 
\[
\{v_0 \in C^\infty(\X\setminus\{0\}): v \in C_c^\infty(\X\setminus\{0\})\}\,.
\]
Its completion with respect to the inner product \eqref{inner-lambda} is the Hilbert space $\L^2_0(\X)$.
In this subsection we take a look at this space as representation of $\Og_{1,1}$.

The elements of $\L^2_0(\X)$ are of the form 
$r^{-1}f(e^{i\theta})$ where $w=re^{i\theta}$ with $(r,e^{i\theta})\in \R_{>0}\times S^1$ and $f\in\L^2(S^1)$. By the $\L^2$-Fourier expansion 
$f(e^{i\theta})\sim \sum_{k\in \Zb} \Hat{f}(k)e^{ik\theta}$, it suffices to consider the action of $\omega_0$ on $r^{-1}e^{ik\theta}$, $k\in \Zb$.

\begin{lem}\label{rsonance 23}
The following formulas hold:
\begin{equation}
\label{omegas1}
\omega_0(s): r^{-1}e^{ik\theta} \mapsto r^{-1}e^{ik\theta}, \qquad \text{if $k\in \Zb, k\geq 0$}\,,
\end{equation}
\begin{equation}
\label{omegas2}
\omega_0(s): r^{-1}e^{ik\theta} \mapsto (-1)^kr^{-1}e^{ik\theta}, \qquad \text{if $k\in \Zb, k< 0$}\,.
\end{equation}
\end{lem}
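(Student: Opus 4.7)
The plan is to factor $\omega_0(s)=R(j)\mathcal F$ via \eqref{symplecticFourierTransform} and evaluate each factor separately on the tempered distribution $v_k(r,\theta):=r^{-1}e^{ik\theta}$. The action of $R(j)$ is immediate: from $(x,y)j=(-y,x)$ one sees that $R(j)$ acts in polar coordinates as the rotation $\theta\mapsto\theta+\pi/2$, so
\[
R(j)v_k(r,\theta)\;=\;v_k(r,\theta+\pi/2)\;=\;i^k\,v_k(r,\theta).
\]

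For the Fourier factor, I would first argue that $\mathcal F v_k = c_k\,v_k$ for some scalar $c_k\in\C$. Since $v_k$ is a tempered distribution homogeneous of degree $-1$, \cite[Theorem 7.1.18]{Hormander} shows its Fourier transform is again homogeneous of degree $-1$; and since $\mathcal F$ commutes with the rotation action, it preserves the rotation character $e^{ik\theta}$. Hence $\mathcal F v_k$ is forced to be proportional to $v_k$. To compute $c_k$ explicitly, I would invoke the standard two-dimensional Bochner--Hecke formula
\[
\mathcal F\bigl(g(r)e^{ik\theta}\bigr)(\rho,\phi)\;=\;2\pi(-i)^{|k|}\,e^{ik\phi}\int_0^\infty g(r)\,J_{|k|}(2\pi r\rho)\,r\,dr
\]
with $g(r)=r^{-1}$, and use the classical evaluation $\int_0^\infty J_\nu(u)\,du=1$ (valid for $\Re\nu>-1$) to obtain $c_k=(-i)^{|k|}$.

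Combining both steps gives $\omega_0(s)v_k = i^k(-i)^{|k|}\,v_k$. When $k\geq 0$, this scalar equals $\bigl(i(-i)\bigr)^k=1$, matching \eqref{omegas1}; when $k=-m<0$, it equals $(-i)^m(-i)^m=(-1)^m=(-1)^k$, matching \eqref{omegas2}. As a sanity check, these values are consistent with the involutivity $\omega_0(s)^2=I$, which forces the eigenvalues on each character subspace to lie in $\{\pm 1\}$.

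The main obstacle is the rigorous justification of the Fourier step, since $v_k\notin L^1(\R^2)\cup L^2(\R^2)$ and the resulting Hankel integral is only conditionally convergent. The cleanest remedy is to work with the analytic family $r^{-1-i\lambda}e^{ik\theta}$ in the strip where the pairings converge absolutely: equivariance under dilations and rotations forces $\mathcal F\bigl(r^{-1-i\lambda}e^{ik\theta}\bigr)=c_k(\lambda)\,r^{-1+i\lambda}e^{ik\theta}$ with $c_k(\lambda)$ analytic in $\lambda$, one computes $c_k(\lambda)$ by pairing against a convenient test function (e.g.\ a Gaussian weighted by $e^{-ik\theta}$, which reduces the computation to an elementary Mellin--Hankel integral), and then one specializes at $\lambda=0$ where no pole is encountered.
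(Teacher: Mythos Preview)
Your argument is correct and reaches the same answer as the paper, but the route differs in two respects worth noting.

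First, your eigenvector argument (homogeneity plus rotation equivariance force $\mathcal F v_k$ to be a scalar multiple of $v_k$) is a conceptual shortcut the paper does not take; the paper computes the Fourier transform directly. Second, and more substantively, the regularizations differ: the paper replaces $r^{-1}$ by the $\L^1$ function $r^{-1}e^{-2\pi tr}$, applies the polar Fourier formula \eqref{F0}, invokes the Lipschitz--Hankel identity
\[
\int_0^\infty e^{-t\rho}J_k(r\rho)\,d\rho=\frac{(\sqrt{t^2+r^2}-t)^k}{r^k\sqrt{t^2+r^2}}\qquad(k\geq 0,\ t>0),
\]
and lets $t\to 0^+$ in $\Ss^*(\X)$. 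You instead propose to deform in the homogeneity parameter $\lambda$, which is equally valid and arguably cleaner since it stays within the family $\L^2_\lambda(\X)$ already set up in the paper. Your formal computation via $\int_0^\infty J_\nu(u)\,du=1$ is exactly the $t\to 0^+$ limit of the paper's Lipschitz--Hankel formula, so the two regularizations meet at the same classical Bessel identity.

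One small remark: your final paragraph sketches the $\lambda$-deformation but does not carry it out; if you submit this, either complete that computation (the constant $c_k(\lambda)$ is a standard ratio of Gamma functions, regular at $\lambda=0$) or simply adopt the exponential damping, which is quicker to write down since the relevant Hankel integral is tabulated.
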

\begin{proof}
For $t>0$ define $g_{k,t}(w)=f_t(r) e^{ik\theta}$, where $w=re^{i\theta}$ and $f_t(r)=r^{-1}e^{-2\pi tr}$.
Then $g_{k,t}\in \L^1(\X)$ and $\lim_{t\to 0^+} g_{k,t}=g_{k}$, where $g_{k}(re^{i\theta})=r^{-1}e^{ik\theta}$ and 
the limit is in $\Ss^*(\X)$.

As is well known, the two-dimensional Euclidean Fourier transform $\fo$, see  \eqref{usualFourierTransform}, may be expressed in terms of Bessel functions by passing to polar coordinates. In particular, by \cite[Ch. 4, Theorem 1.6]{SteinWeiss}, if $g(w)=f(r)e^{ik\theta} \in \L^1(\X)$, then 
$(\fo g)(w)=F(r)e^{ik\theta}$ where $w=re^{i\theta}$ and 
\begin{equation}
    \label{F0}
    F(r)=2\pi i^k\int_0^\infty f(\rho) J_{-k}(2\pi r\rho) \rho\,d\rho
        =2\pi (-1)^k i^k \int_0^\infty f(\rho) J_{k}(2\pi r\rho) \rho\,d\rho\,.
\end{equation}
In \eqref{F0}, $J_k$ denotes the $k$-th Bessel function of the first kind, defined for 
$k\in \Zb$ by
\[
J_k(x)=\frac{1}{2\pi} \int_0^\infty e^{ix\sin \theta} e^{-ik\theta}d\theta
\]
and satisfying $J_{-k}(x)=(-1)^k J_k(x)$.

Recall from \eqref{symplecticFourierTransform} that $\omega_0(s)=\fo R(J)$. If $w=(r\cos\theta,r\sin\theta)\equiv re^{i\theta}$, then 
$wJ=(-r\sin\theta,r\cos\theta)\equiv r e^{i(\theta+\frac{\pi}{2})}$.
Hence 
\[
(\omega_0(s)g_{k,t})(w)=\fo g_{k,t}\left(r e^{i(\theta+\frac{\pi}{2})}\right)=F_{k,t}(r) e^{i(\theta+\frac{\pi}{2})}\,,
\]
where 
\begin{equation}
    \label{Ft}
    F_{k,t}(r)=2\pi i^k\int_0^\infty e^{-2\pi t\rho} J_{-k}(2\pi r\rho)\,d\rho
        =2\pi (-1)^k i^k \int_0^\infty e^{-2\pi t\rho} J_k(2\pi r\rho)\,d\rho\,.
\end{equation}
As $t>0$, for $k>-1$
\[
2\pi \int_0^\infty e^{-2\pi t\rho} J_k(2\pi r\rho)\,d\rho
= \int_0^\infty e^{-t\rho} J_k(r\rho)\,d\rho
=\frac{(\sqrt{t^2+r^2}-t)^k}{r^k\sqrt{t^2+r^2}}
\]
by \cite[formula (8) on page 386; this formula is due to Lipschitz (1859) for $k=0$ and to Hankel (1875) for $k=\nu$ with $\Re\nu>-1$]{WatsonBessel}. 
Hence for $k\in \Zb$, $k\geq 0$, 
\begin{align*}
(\omega_0(s)g_k)(re^{i\theta})&=\lim_{t\to 0^+} (\omega_0(s)g_{k,t})(re^{i\theta})\\
 &=\lim_{t\to 0^+} F_{k,t}(r)e^{ik(\theta+\frac{\pi}{2})}\\
 &=(-1)^k i^k i^k \lim_{t\to 0^+} \frac{(\sqrt{t^2+r^2}-t)^k}{r^k\sqrt{t^2+r^2}}\\
 &= r^{-1}e^{ik\theta}\,,
\end{align*}
which is \eqref{omegas1}. If $k<0$, then one applies the above to the first formula in \eqref{Ft} and gets
\[
(\omega_0(s)g_k)(re^{i\theta})=(-1)^k r^{-1}e^{ik\theta}\,,
\]
which is \eqref{omegas2}.
\end{proof}

\begin{lem}\label{resonance 23bis}
The following formulas hold:
\[
\omega_0(h_a): r^{-1}e^{ik\theta} \mapsto r^{-1}e^{ik\theta}, \qquad \text{if $k\in \Zb$, $k$ even}\,,
\]
\[
\omega_0(h_a): r^{-1}e^{ik\theta} \mapsto  \frac{a}{|a|} r^{-1}e^{ik\theta}, \qquad \text{if $k\in \Zb$, $k$ odd}\,.
\]
\end{lem}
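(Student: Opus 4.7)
The plan is to apply the definition $\omega_0(h_a)v(x)=|a|^{-1}v(a^{-1}x)$ directly to the function $g_k(w)=r^{-1}e^{ik\theta}$, where $w=re^{i\theta}$ with $r>0$ and $\theta\in \R/2\pi\Zb$. The only care needed is to track how the polar coordinates of $a^{-1}w$ depend on the sign of $a$: for $a>0$ only the radius rescales, while for $a<0$ the point $a^{-1}w=|a|^{-1}(-w)$ also undergoes a phase shift of $\pi$ in the angular variable.

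First I would treat the case $a>0$. Then $a^{-1}w=(a^{-1}r)e^{i\theta}$ has polar coordinates $(|a|^{-1}r,\theta)$, so $g_k(a^{-1}w)=|a|r^{-1}e^{ik\theta}=|a|\,g_k(w)$, and the prefactor $|a|^{-1}$ in the definition produces $\omega_0(h_a)g_k=g_k$. Next I would handle $a<0$: there $a^{-1}w=|a|^{-1}r\,e^{i(\theta+\pi)}$, whence $g_k(a^{-1}w)=|a|r^{-1}e^{ik\theta}e^{ik\pi}=(-1)^k|a|\,g_k(w)$, and again the $|a|^{-1}$ prefactor yields $\omega_0(h_a)g_k=(-1)^k g_k$.

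Combining the two cases gives the uniform formula $\omega_0(h_a)g_k=(a/|a|)^k g_k$, which equals $g_k$ when $k$ is even and $(a/|a|)g_k$ when $k$ is odd, matching the two assertions of the lemma. There is no real obstacle here; the computation is entirely mechanical, with the only point of attention being the $\pi$-shift in the angular variable for $a<0$, which is precisely the mechanism producing the $(-1)^k$ and hence the parity distinction.
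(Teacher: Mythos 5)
Your argument is correct and matches the paper's proof: both simply substitute $g_k$ into the defining formula $\omega_0(h_a)v(x)=|a|^{-1}v(a^{-1}x)$ and track the angular shift by $\pi$ when $a<0$, yielding $\omega_0(h_a)g_k=(a/|a|)^k g_k$. No gap; the paper's version is just written more tersely as a two-line case distinction.
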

\begin{proof}
If $g_k$ is defined by $g_k(re^{i\theta})=r^{-1}e^{ik\theta}$, then 
\[
|a|^{-1}g_k(a^{-1}re^{i\theta})=
\begin{cases}
r^{-1}e^{ik\theta} &\text{if $a>0$}\\
(-1)^kr^{-1}e^{ik\theta} &\text{if $a<0$}\,.
\end{cases}
\]
\end{proof}

\begin{cor}\label{resonance 24}
The restriction of $\omega_0$ to $\L^2_0(\X)$ decomposes as the direct sum
\[
\L^2_0(\X)=\L^2_{0;0,0}(\X)\oplus\L^2_{0;1,0}(\X)\oplus\L^2_{0;1,1}(\X)
\]
of isotypic $\Og_{1,1}$-representations. Explicitly,
\begin{align*}
\L^2(\X)_{0;0,0}&=\bigoplus_{\substack{k\in \Zb\\ \textup{$k$ even}}}
\C r^{-1}e^{ik\theta}\,,\\
\L^2(\X)_{0;1,0}&=\bigoplus_{\substack{k\geq 0\\ \textup{$k$ odd}}}\C r^{-1}e^{ik\theta}\,,\\
\L^2(\X)_{0;1,1}&=\bigoplus_{\substack{k<0\\ \textup{$k$ odd}}}\C r^{-1}e^{ik\theta}\,.
\end{align*}
For $(\varepsilon,\delta)\in \{(0,0),(1,0),(1,1)\}$, the representation on $\L^2_{0;\varepsilon,\delta}(\X)$ is isotypic, with $1$-dimen\-sional type $\pi_{0;\varepsilon,\delta}$.  
In particular, the determinant representation $\pi_{0;0,1}$ of $\Og_{1,1}$ does not occur in the decomposition.
\end{cor}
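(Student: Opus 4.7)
The plan is straightforward since the preceding two lemmas have already computed how $\omega_0(s)$ and $\omega_0(h_a)$ act on each line $\C g_k$, where $g_k(re^{i\theta}) = r^{-1}e^{ik\theta}$.

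First, I would observe that $\L^2_0(\X)$, being the completion of the space of smooth functions on $\X\setminus\{0\}$ homogeneous of degree $-1$ with respect to the $\L^2(S^1)$-norm, is canonically isometric to $\L^2(S^1)$ via the correspondence $r^{-1}f(e^{i\theta}) \leftrightarrow f$. The standard Fourier basis $\{e^{ik\theta}\}_{k\in\Zb}$ of $\L^2(S^1)$ therefore yields the Hilbert basis $\{g_k\}_{k\in\Zb}$ of $\L^2_0(\X)$. By Lemmas~\ref{rsonance 23} and~\ref{resonance 23bis}, each line $\C g_k$ is stabilized by both $\omega_0(s)$ and $\omega_0(h_a)$, hence by all of $\Og_{1,1}$, so $\C g_k$ is a one-dimensional $\Og_{1,1}$-submodule.

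Next, I would identify the character of $\Og_{1,1}$ realized on $\C g_k$ by comparing the actions computed in the two lemmas with the four characters $\pi_{0;\varepsilon,\delta}$ defined earlier. Writing down the four cases explicitly:
\begin{align*}
\pi_{0;0,0}(s)&=1, & \pi_{0;0,0}(h_a)&=1,\\
\pi_{0;1,0}(s)&=1, & \pi_{0;1,0}(h_a)&=a/|a|,\\
\pi_{0;1,1}(s)&=-1, & \pi_{0;1,1}(h_a)&=a/|a|,\\
\pi_{0;0,1}(s)&=-1, & \pi_{0;0,1}(h_a)&=1,
\end{align*}
and matching them against the action on $g_k$ according to the parity and sign of $k$:
if $k$ is even, then $\omega_0(s)g_k=g_k$ (for $k\geq 0$ directly, for $k<0$ because $(-1)^k=1$) and $\omega_0(h_a)g_k=g_k$, so $\C g_k$ is of type $\pi_{0;0,0}$; if $k\geq 0$ is odd, then $\omega_0(s)g_k=g_k$ and $\omega_0(h_a)g_k=(a/|a|)g_k$, so $\C g_k$ is of type $\pi_{0;1,0}$; finally, if $k<0$ is odd, then $\omega_0(s)g_k=-g_k$ and $\omega_0(h_a)g_k=(a/|a|)g_k$, so $\C g_k$ is of type $\pi_{0;1,1}$.

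Collecting the basis vectors by type yields the claimed orthogonal decomposition $\L^2_0(\X)=\L^2_{0;0,0}(\X)\oplus\L^2_{0;1,0}(\X)\oplus\L^2_{0;1,1}(\X)$ into isotypic components. The absence of the determinant representation $\pi_{0;0,1}$ then follows from a parity contradiction: realizing $\pi_{0;0,1}$ on some $\C g_k$ would require $\omega_0(h_a)g_k=g_k$, forcing $k$ even, together with $\omega_0(s)g_k=-g_k$, which by Lemma~\ref{rsonance 23} forces $k<0$ to be odd. There is no obstacle in this proof beyond assembling the case analysis cleanly; the entire content is already encoded in the two preceding lemmas and the definitions of the characters $\pi_{0;\varepsilon,\delta}$.
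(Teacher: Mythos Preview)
Your proposal is correct and is precisely the argument the paper has in mind: the corollary is stated without proof because it follows immediately from Lemmas~\ref{rsonance 23} and~\ref{resonance 23bis} together with the definitions of the characters $\pi_{0;\varepsilon,\delta}$, and your case analysis spells this out accurately.
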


Because of Corollary \ref{resonance 24}, $\L^2(\X)_{0;\varepsilon,\delta}$ is the $\Og_{1,1}$-isotypic component of type $\pi_{0;\varepsilon,\delta}$. Hence we write
\[
\L^2(\X)_{\pi_{0;\varepsilon,\delta}}=\L^2(\X)_{0;\varepsilon,\delta}\,.
\]
We summarize our result in the following theorem.

\begin{thm}
The resonance representation $\L^2(\X)_0$ of $\Og_{1,1}$ splits as follows:
\begin{equation}
\label{resonance21}
\L^2(\X)_0=\L^2(\X)_{\pi_{0;0,0}}\oplus \L^2(\X)_{\pi_{0;1,1}}\oplus \L^2(\X)_{\pi_{0;1,0}}\,,
\end{equation}
where $\L^2(\X)_{\pi_{0;\varepsilon,\delta}}$ is isotypic, with one dimensional type $\pi_{0;\varepsilon,\delta}$. In particular: 
\begin{enumerate}
\item
$\Og_{1,1}$ acts trivially on $\L^2(\X)_{\pi_{0;0,0}}$, 
\item
the group $\SO_{1,1}$ acts by the sign representation on $\L^2(\X)_{\pi_{0;1,0}}\oplus \L^2(\X)_{\pi_{0;1,1}}$, 
\item
the element $s\in \Og_{1,1}$ acts trivially on $\L^2(\X)_{\pi_{0;1,0}}$,
\item
the element $s\in \Og_{1,1}$ acts via multiplication by $-1$ on $\L^2(X)_{\pi_{0;1,1}}$.
\end{enumerate}
The determinant representation of $\Og_{1,1}$ does not occur in the decomposition. 
\end{thm}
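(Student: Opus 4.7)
The plan is to observe that the theorem is essentially a repackaging of Corollary~\ref{resonance 24} into the language of resonance representations, so the work has already been done in the preceding lemmas. First, I would invoke Proposition~\ref{resonance00} to identify the resonance space: the residue operator at $z=0$ maps $v \mapsto \tfrac{i}{2}\,v_0$, whose range is $\{v_0 : v\in C_c^\infty(\X\setminus\{0\})\}$, dense in $\L^2_0(\X)$ with respect to the norm \eqref{inner-lambda}. So the resonance representation is $\L^2_0(\X)$ with the $\Og_{1,1}$-action inherited from $\omega_0$ (extended by duality to tempered distributions, as discussed before Corollary~\ref{O11decomosition}).

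Next, I would pass to the orthonormal basis of $\L^2_0(\X)$ given by the vectors $r^{-1}e^{ik\theta}$ ($k\in\Zb$), obtained by Fourier expansion on $S^1$: writing a general element of $\L^2_0(\X)$ as $r^{-1}f(e^{i\theta})$ with $f\in\L^2(S^1)$, the Fourier series of $f$ yields the Hilbert sum decomposition into one-dimensional lines $\C r^{-1}e^{ik\theta}$.

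The main step is then a direct character match. Lemma~\ref{resonance 23bis} tells me that $\omega_0(h_a)$ acts on $r^{-1}e^{ik\theta}$ trivially when $k$ is even and by the scalar $a/|a|$ when $k$ is odd, while Lemma~\ref{rsonance 23} tells me that $\omega_0(s)$ acts by $1$ if $k\geq 0$ and by $(-1)^k$ if $k<0$. Comparing with the formulas $\pi_{0;\varepsilon,\delta}(h_a)=(a/|a|)^\varepsilon$ and $\pi_{0;\varepsilon,\delta}(s)=(-1)^\delta$, I read off three cases: $k$ even gives the trivial character $\pi_{0;0,0}$; $k$ odd and nonnegative gives $\pi_{0;1,0}$; $k$ odd and negative gives $\pi_{0;1,1}$. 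Grouping the basis vectors by type yields the decomposition \eqref{resonance21} and items (1)--(4) simultaneously.

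Finally, I would verify the last sentence by ruling out $\pi_{0;0,1}$ directly: this character requires $h_a\mapsto 1$ and $s\mapsto -1$; the first condition forces $k$ to be even, but then $s$ acts trivially by \eqref{omegas1} (or by $(-1)^k=1$ from \eqref{omegas2}), a contradiction. No genuine obstacle is expected here; the only subtlety worth flagging is that the action of $s$ is the dualized one on tempered distributions, which is precisely the framework set up before Corollary~\ref{O11decomosition}, so applying the Bessel-function computation in Lemma~\ref{rsonance 23} to the homogeneous-degree-$(-1)$ distributions $r^{-1}e^{ik\theta}$ is legitimate.
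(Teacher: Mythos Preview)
Your proposal is correct and follows essentially the same approach as the paper: the theorem is stated there as a summary of Corollary~\ref{resonance 24}, which in turn is deduced from Lemmas~\ref{rsonance 23} and~\ref{resonance 23bis} via exactly the character-matching argument on the basis $r^{-1}e^{ik\theta}$ that you describe. Your explicit exclusion of $\pi_{0;0,1}$ is the same case analysis, just spelled out.
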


Each of the spaces \eqref{resonance21} is contained in $\Ss^*(\X)$ and is preserved by the action of $\Sp_2(\R)$ via $\omega_0$, see \eqref{actionofSp}. 

The three representations on the right-hand side of 
\eqref{resonance21} are unitary representations of $\Sp_2(\R)$ and we know that the Casimir $\omega_0(\mathcal{C}')$ acts by $-1$ because of \eqref{Capelli1}. We also know their $\K$-types by
Corollary \ref{resonance 24}. If we knew they are irreducible $\Sp_2(\R)$-modules, then we would identify them by classification. Fortunately, this is a simple consequence of Howe's duality theory. 
In order to use this theory, we have to move to the metaplectic group $\wt{\Sp}_4(\R)$ and relate its Weil representation $\omega$ to $\omega_0$. This is explained in Appendix \ref{appendix:Weil representation} in general, and specifically in Appendix \ref{appendix:O11Sp2} for this pair, see \eqref{twistedrep}. In fact, $\omega_0$ agrees with $\omega|_{\wt{\Og_{1,1}}\wt{\Sp_2(\R)}}$ twisted by a character, which is a representation of $\Og_{1,1}\Sp_2(\R)$. The equality of these two representations is true by \eqref{omegaonM-det1} for $\SO_{1,1}\Sp_2(\R)$. On the other hand, if $\wt s$ denotes a preimage of $s$ under the metapletic cover, then $\omega(\wt s)$ is computed in \eqref{omegawts} and the twisting removes the $\pm$ ambiguity. Twisting by a character does not change the irreducibility. So the three representations are irreducible. By the classification of the irreducible unitary 
$\Sp_2(\R)=\SL_2(\R)$-modules, (see e.g. \cite[VI, \S 6]{LangSL2R}), one obtains the following corollary.

\begin{cor}\label{resonance 25}
The spaces \eqref{resonance21} are irreducible unitary $\omega_0(\Sp_2(\R))$-modules. Specifically, 
\begin{enumerate}
\item $\L^2(\X)_{\pi_{0;0,0}}=\L^2(\X)_{\pi_{0,1}}$ is the spherical unitary principal series $\pi_{0,1}$ on which the Casimir element
$\Ca{}'$ acts by $-1$
\item $\L^2(\X)_{\pi_{0;1,0}}=\L^2(\X)_{\Dsf^0_+}$ is the holomorphic limit of discrete series $\Dsf^0_+$\,,
\item $\L^2(\X)_{\pi_{0;1,1}}=\L^2(\X)_{\Dsf^0_-}$ is the  anti-holomorphic limit of discrete  series $\Dsf^0_-$\,.
\end{enumerate}
Hence the entire resonance space \eqref{resonance21} is not irreducible under the joint action of $\Og_{1,1}\times \Sp_2(\R)$.
It is the direct sum of three irreducible subspaces:
\[
\left(\pi_{0;0,0}\otimes \pi_{0,1}\right) \oplus
\left(\pi_{0;1,0}\otimes \Dsf^0_+\right) \oplus
\left(\pi_{0;1,1}\otimes \Dsf^0_-\right)\,.
\]
\end{cor}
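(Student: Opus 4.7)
The plan is to combine Howe duality with the $\K$-type information from Corollary \ref{resonance 24} and the Casimir identity \eqref{Capelli1}. First, since the actions of $\Og_{1,1}$ and $\Sp_2(\R)$ on $\L^2(\X)$ commute, each of the three summands in \eqref{resonance21} is automatically $\Sp_2(\R)$-invariant; the task is to show irreducibility and to identify each isomorphism class.

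To establish irreducibility, I would pass to the genuine Weil representation $\omega$ of the metaplectic group $\wt{\Sp}_4(\R)$. As recalled in Appendices \ref{appendix:Weil representation} and \ref{appendix:O11Sp2} (in particular around \eqref{twistedrep}), the representation $\omega_0$ of $\Og_{1,1}\Sp_2(\R)$ agrees, up to a character twist, with the restriction of $\omega$ to the inverse image of $\Og_{1,1}\Sp_2(\R)$ in $\wt{\Sp}_4(\R)$. Since twisting by a character preserves unitarity, irreducibility, and isotypic decompositions, it suffices to apply Howe's duality theorem to $\omega$. This yields a bijection between irreducible admissible representations $\sigma$ of $\wt{\Og_{1,1}}$ occurring in $\omega$ and irreducible admissible representations $\theta(\sigma)$ of $\wt{\Sp_2(\R)}$, under which each $\sigma$-isotypic component of $\omega$ is isomorphic to $\sigma\otimes\theta(\sigma)$. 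Because each $\pi_{0;\varepsilon,\delta}$ appearing in \eqref{resonance21} is one-dimensional by Corollary \ref{resonance 24}, the corresponding isotypic component coincides, as an $\Sp_2(\R)$-module, with $\theta(\sigma)$, hence is irreducible.

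To identify the three modules I would use two invariants: the Casimir eigenvalue and the $\K$-type data. By \eqref{Capelli1}, $\omega_0(\Ca')=\omega_0(\Ca)-1$, and on the resonance space at $z=0$ the operator $\omega_0(\Ca)=(\mathcal{E}+1)^2$ acts as $\lambda^2|_{\lambda=0}=0$, so $\omega_0(\Ca')$ acts by the scalar $-1$ on each summand. From Corollary \ref{resonance 24}, $\L^2(\X)_{\pi_{0;0,0}}$ carries every even integer $\K$-type and is therefore spherical; $\L^2(\X)_{\pi_{0;1,0}}$ contains the positive odd $\K$-types and has lowest weight $k=1$; and $\L^2(\X)_{\pi_{0;1,1}}$ contains the negative odd $\K$-types and has highest weight $k=-1$. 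Comparing with the classification of irreducible unitary $\SL_2(\R)$-modules as in \cite[VI, \S 6]{LangSL2R}, the only irreducible unitary representation with Casimir eigenvalue $-1$ and a $\K$-fixed vector is the unitary principal series $\pi_{0,1}$, while the unique irreducibles with the prescribed extremal $\K$-types at $\pm1$ and the same infinitesimal character are the holomorphic and antiholomorphic limits of discrete series $\Dsf^0_\pm$. The final statement about the joint $\Og_{1,1}\times \Sp_2(\R)$-decomposition is then obtained by combining each one-dimensional character $\pi_{0;\varepsilon,\delta}$ with its Howe partner.

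The main obstacle I anticipate is the careful passage through the metaplectic cover: one must verify that the character twist relating $\omega_0$ to the restriction of $\omega$ genuinely preserves the $\K$-type information used for the identification and, as stressed in the paragraph preceding the corollary, removes the $\pm$ ambiguity in the computation of $\omega(\wt s)$ recorded in \eqref{omegawts}. Once this bookkeeping is in place, the remaining steps are direct applications of Howe's correspondence and of the classification of unitary $\SL_2(\R)$-modules.
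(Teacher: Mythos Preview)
Your proposal is correct and follows essentially the same route as the paper: irreducibility is obtained from Howe duality via the character twist relating $\omega_0$ to the restriction of the Weil representation $\omega$ (Appendices \ref{appendix:Weil representation} and \ref{appendix:O11Sp2}), and the identification of the three summands then follows from the Casimir value $-1$ together with the $\K$-type lists in Corollary \ref{resonance 24} and the classification in \cite[VI, \S 6]{LangSL2R}. The only slip is a harmless sign: on $\L^2_\lambda(\X)$ one has $\omega_0(\Ca)v_\lambda=-\lambda^2 v_\lambda$ (not $\lambda^2$), but at $\lambda=0$ this still gives $0$ and hence $\omega_0(\Ca')=-1$, so your conclusion is unaffected.
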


\begin{rem}
As observed in the introduction, one of the motivating examples for the study of resonances is the Casimir element acting by the left-regular representation on a Riemannian symmetric space of the noncompact type. One could consider other classes of homogenous spaces.
For instance, if $\G'=\SL_2(\R)$ and $\N'=\left\{\begin{pmatrix}
1 & x \\ 0 & 1
\end{pmatrix}; x \in \R\right\}$, then the homogeneous space $\G'/\N'$ can be realized as $\X\setminus\{0\}$, where $\X=M_{1,2}(\R)$. Our computation in this section can also be interpreted in this sense.
\end{rem}

\begin{rem}
The case $(\G,\G')=(\Og_{1,1},\Sp_{2n}(\R))$ with $n>1$ could be treated in a similar way, but the result on the resonance representations would be less explicit. 
\end{rem}

\section{Decomposing the restriction to $\G$ of the (twisted) Weil representation using Harish-Chandra's Plancherel formula}
\label{section:Plancherel}
In this section we outline the method of decomposing the restriction to $\G$ of the Weil representation (twisted by a suitable character) using Harish-Chandra's Plancherel formula. This method applies to orthosymplectic dual pairs of the form 
$(\G,\G')=(\Sp_{2n}(\R), \Og_{p,p})$ or  $(\Og_{p,p},\Sp_{2n}(\R))$ in the stable range, with $\G$ the smaller member. We first recall some facts concerning these pairs. 

Any such pair $(\G,\G')$ is an irreducible real reductive dual pair of type I (see \cite{HoweRemarks}) and can be constructed
as follows. There exist real vector spaces $\V$ and $\V'$ with non-degenerate bilinear forms
$(\cdot ,\cdot)$ and $(\cdot ,\cdot)'$, one symmetric and the other skew-symmetric (or vice versa), such that $\G \subseteq \GL(\V)$ and $\G'\subseteq \GL(\V')$ are the isometry groups of $(\cdot ,\cdot )$ and $(\cdot ,\cdot )'$, respectively. Let $\Wv = \Hom_{\Bbb D}(\V',\V)$. Define a map
\[
\Hom_{\Bbb D}(\V',\V)\ni w\mapsto w^*\in \Hom_{\Bbb D}(\V,\V')
\]
by 
\[
(wv',v)=(v',w^*v)' \qquad (v\in \V,\, v'\in \V')\,.
\]
Then the formula
\[
\langle w,w'\rangle=\tr_{\Bbb D/\R}(w w'{}^*) \qquad (w,w'\in\Wv)
\]
defines a non-degenerate symplectic form on $\Wv$. We denote by 
$\Sp(\Wv)$ the symplectic group of $(\Wv, \inner{\cdot}{\cdot})$. 
The groups $\G$ and $\G'$ act on $\Wv$ by 
\begin{equation}
\label{GG'actions}
g(w)=gw \qquad \text{and} \qquad g'(w)=w {g'}^{-1} \qquad (g\in \G,\, g'\in \G',\, w\in \Wv).
\end{equation}
These actions embed $\G$ and $\G'$ as a subgroups of $\Sp(\Wv)$. 

Let $\V'=\X'\oplus \Y'$ be a complete polarization of $\V'$. 
Assuming that $(\G,\G')$ is in the stable range, with $\G$ the smaller set, means that 
$\dim_\DD \X'\geq \dim_\DD \V$. Hence, $(\Sp_{2n}(\R), \Og_{p,p})$ is in the stable range, with 
$\Sp_{2n}(\R)$ the smaller member, if and only if $p\geq 2n$. Similarly, $(\Og_{p,p}, \Sp_{2n}(\R))$ is in the stable range, with $\Og_{p,p}$ the smaller member, if and only if $n\geq 2p$. (In particular, neither $(\Og_{1,1}, \Sp_{2}(\R))$ nor $(\Sp_{2}(\R), \Og_{1,1})$ are in the stable range.)
Set 
\begin{equation}
\label{XY}
\X = \Hom_{\Bbb D}(\X', \V)\,,\qquad \Y = \Hom_{\Bbb D}(\Y', \V)\,.
\end{equation}
Then $\X$, $\Y$ are isotropic and $\Wv=\X\oplus\Y$. 

Let $\wt{\Sp}(\Wv)$ be the metaplectic group and let $\wt{\Sp}(\Wv) \in \wt{g} \mapsto g\in \Sp(\Wv)$ be the metaplectic cover, which is a double cover of $\Sp(\Wv)$. For a subgroup $\H$ of $\Sp(\Wv)$ we denote by 
$\wt{\H}$ its preimage in $\wt{\Sp}(\Wv)$.

Let $(\omega, \L^2(\X))$ be the Schr\"odinger model of the Weil representation of $\wt{\Sp}(\Wv)$ attached to the character $\chi(r)=e^{2\pi i r}$ of $\R$. See Appendix \ref{appendix:Weil representation}. The space of smooth vectors of $\omega$ is $\mathcal{S}(\X)$.
By \eqref{GG'actions} and \eqref{XY}, $\G$ preserves both $\X$ and $\Y$. 
Hence, by \eqref{omegaonM} there is a continuous group homomorphism $\det_\X^{-1/2}: \wt{\G} \to \C^\times$, with the property that $\left(\det_\X^{-1/2}(\wt{g})\right)^2=\det(g|_\X)^{-1}$ for all $\wt{g}\in \wt{\G}$, such that 
\[
\omega(\wt{g})v(x)=\det_\X^{-1/2}(\wt{g}) v(g^{-1}x) \qquad (\wt{g}\in \wt{\G}, \, v\in \mathcal{S}(\X),\, x\in \X)\,.
\]

Let $\ZZ_2$ denote the kernel of the metaplectic cover. A representation $\Pi$ of $\wt{\G}$ is called genuine if its restriction to $\ZZ_2$ is a multiple of the unique non-trivial character $\varepsilon$ of $\ZZ_2$. Only genuine representations of $\wt{\G}$ can occur in Howe's duality. 

For an irreducible unitary representation $\Pi$ of $\wt{\G}$, we denote by $\Theta_\Pi$ its distribution character. By Harish-Chandra's regularity theorem \cite[8.4.1]{WallachI}, the distribution $\Theta_\Pi$ coincides with the Haar measure on $\wt{\G}$ multiplied by a locally integrable functions (which is real analytic on the set of regular semisimple elements of $\wt{\G}$ and zero elsewhere). We identify $\Theta_\Pi$ with this function. Furthermore, we denote by $\Pi^c$ the contragredient representation of $\Pi$. Notice that, if $\Pi$ is a genuine representation of $\wt{\G}$, then the map 
\[
\wt{\G}\ni\wt{g} \mapsto \Theta_{\Pi^c}(\wt{g})\omega(\wt{g}) \in \mathcal{B}(\L^2(\X)),
\] 
where $\mathcal{B}(\L^2(\X))$ is the space of bounded linear operators on $\L^2(\X)$, is constant on the fibers of the metaplectic cover 
$\wt{\G} \to \G$ and hence defines a function on $\G$.

The following theorem was proved in \cite[Theorem 3.1]{PrzebindaUnitary}
in a more general context.

\begin{thm}\label{1993_thm3.1}
Let $\Pi$ be a genuine irreducible tempered unitary representation of $\wt\G$.
Then the formula
\begin{equation}\label{1993_thm3.1_1}
(\omega(\Theta_{\Pi^c})u,v)=\int_{\G} \Theta_{\Pi^c}(\t g)(\omega(\t g)u,v)\,dg \qquad (u,v\in \mathcal{S}(\X))
\end{equation}
defines a non-trivial, hermitian, positive semidefinite $\wt\G\cdot\wt\G'$-invariant form on 
$\mathcal{S}(\X)$. 

Let $\mathcal R$ denote the radical of this form. 
Then the $\wt\G\cdot \wt\G’$-module $\mathcal{S}(\X)/\mathcal R$, equipped with the form induced by the form \eqref{1993_thm3.1_1}, completes to
an irreducible unitary representation of $\wt\G\cdot \wt\G’$ on a Hilbert space $\mathcal H_{\omega, \Pi\otimes\Pi'}$, infinitesimally equivalent to $\Pi\otimes\Pi'$
for some $\Pi'$ in the unitary dual of $\wt \G’$. Moreover $\Pi$ corresponds to $\Pi'$ via Howe’s
correspondence.
\end{thm}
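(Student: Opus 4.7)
The plan is to establish the theorem in four stages: convergence of the integral together with sesquilinearity, invariance and hermitian symmetry, positive semidefiniteness via Harish-Chandra's Plancherel formula, and finally identification of the completion as $\Pi\otimes\Pi'$ via Howe's duality.

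First I would verify that the integral defining $(\omega(\Theta_{\Pi^c})u,v)$ converges absolutely on $\mathcal{S}(\X)\times\mathcal{S}(\X)$. Because $(\G,\G')$ sits in the stable range with $\G$ the smaller member, the restriction $\omega|_{\wt\G}$ is a tempered unitary representation, so the matrix coefficient $\wt g\mapsto(\omega(\wt g)u,v)$ belongs to Harish-Chandra's Schwartz space $\mathcal{C}(\wt\G)$. Pairing this against the tempered class function $\Theta_{\Pi^c}$ yields a finite number, and sesquilinearity is immediate. The invariance properties then follow by standard manipulations: hermitian symmetry from the change of variables $\wt g\mapsto\wt g^{-1}$ together with $\Theta_{\Pi^c}(\wt g^{-1})=\Theta_\Pi(\wt g)=\overline{\Theta_{\Pi^c}(\wt g)}$ (valid since $\Pi$ is unitary); $\wt\G$-invariance from the substitution $\wt g\mapsto\wt h\wt g\wt h^{-1}$ combined with unimodularity and the class-function property of $\Theta_{\Pi^c}$; and $\wt\G'$-invariance from the fact that $\omega(\wt g')$ commutes with every $\omega(\wt g)$, since $\wt\G$ and $\wt\G'$ commute inside $\wt{\Sp}(\Wv)$.

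The heart of the argument, and the main obstacle, is positive semidefiniteness together with non-triviality. The strategy is to apply Harish-Chandra's Plancherel theorem to decompose $\omega|_{\wt\G}$ spectrally along the tempered unitary dual of $\wt\G$ and then invoke the orthogonality relations for tempered characters to obtain an identity of the form
\[
(\omega(\Theta_{\Pi^c})u,v)=c_\Pi\,(P_\Pi u,P_\Pi v)_{\L^2(\X)}\,,
\]
where $P_\Pi$ is the orthogonal projection of $\L^2(\X)$ onto the $\Pi$-isotypic component under $\omega|_{\wt\G}$ and $c_\Pi>0$ is a positive Plancherel constant. Positivity is then immediate, the radical $\mathcal{R}$ is identified with $\ker(P_\Pi)\cap\mathcal{S}(\X)$, and non-triviality reduces to the assertion that $\Pi$ actually occurs in $\omega|_{\wt\G}$, which is guaranteed in the stable range by Howe's theorem.

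Finally, the quotient $\mathcal{S}(\X)/\mathcal{R}$ equipped with the induced form completes to $P_\Pi(\L^2(\X))$, a closed subspace of $\L^2(\X)$. Since the actions of $\wt\G$ and $\wt\G'$ commute, this completion carries a commuting unitary action of $\wt\G\cdot\wt\G'$, and Howe's duality theorem for reductive dual pairs identifies its structure: it is unitarily equivalent to $\Pi\otimes\Pi'$ for a unique $\Pi'$ in the unitary dual of $\wt\G'$ paired to $\Pi$ under Howe's correspondence, and this tensor product is irreducible as a $\wt\G\cdot\wt\G'$-module. I expect the Plancherel-theoretic step to be by far the most delicate point, since turning the formal orthogonality of tempered characters into the operator identity above requires precise control of temperedness estimates for matrix coefficients of $\omega|_{\wt\G}$ and of the Plancherel measure on its support; the other steps are essentially formal once convergence is under control.
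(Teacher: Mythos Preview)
Your proposal and the paper's proof take very different routes. The paper does not reprove the theorem from scratch: it simply quotes \cite[Theorem~3.1]{PrzebindaUnitary}, which already contains the full statement under two abstract hypotheses (a) and (b), and then checks these hypotheses in the present setting. Condition (b) is dispatched by further citations, and condition (a) --- absolute convergence of the integral --- is reduced via \cite[Proposition~4.11]{PrzebindaUnitary} to a numerical inequality $\lambda_{\max}>1$ depending only on the dimensions of $\V$, $\V'$ and $\g$, which the paper verifies by a short table using the stable range assumption. Everything you outline (hermitian symmetry, invariance, positivity, identification with $\Pi\otimes\Pi'$) is already contained in the cited result.

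Beyond the difference in approach, your positivity argument has a genuine gap. The identity
\[
(\omega(\Theta_{\Pi^c})u,v)=c_\Pi\,(P_\Pi u,P_\Pi v)_{\L^2(\X)}
\]
with $P_\Pi$ the orthogonal projection onto the $\Pi$-isotypic component of $\L^2(\X)$ cannot hold for a general tempered $\Pi$. If $\Pi$ lies in the continuous part of the Plancherel formula for $\wt\G$ (e.g.\ a unitary principal series), the single point $\{\Pi\}$ has Plancherel-measure zero, so the isotypic projection $P_\Pi$ on $\L^2(\X)$ is the zero operator, whereas the form $(\omega(\Theta_{\Pi^c})u,v)$ is supposed to be nonzero. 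The completion $\mathcal{H}_{\omega,\Pi\otimes\Pi'}$ is \emph{not} a closed subspace of $\L^2(\X)$ with the ambient inner product; it is a fiber of the direct integral \eqref{L2version1}, carrying its own inner product given precisely by \eqref{1993_thm3.1_1}. So the logic is reversed: in the paper, Theorem~\ref{1993_thm3.1} is used to \emph{construct} the fiberwise Hilbert spaces and then obtain the Plancherel decomposition (Corollary~\ref{L2version}), not the other way around. Any independent proof of positivity must avoid this circularity --- for instance by writing $\Theta_{\Pi^c}$ as a limit of positive-type expressions (sums of diagonal matrix coefficients) and controlling the limit, which is essentially what is done in \cite{PrzebindaUnitary}.
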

\begin{proof}
We only need to check that conditions (a) and (b) of \cite[Theorem 3.1]{PrzebindaUnitary} are satisfied. Condition (b) holds by \cite[Lemmas 3.2 and 8.6]{PrzebindaUnitary}. According to \cite[Proposition 4.11]{PrzebindaUnitary}, condition (a) -- which guarantees the absolute convergence of the integral on the right-hand side of \eqref{1993_thm3.1_1}-- is satisfied when $\Theta_\Pi$ has rate of growth $\gamma <\gamma_{\max}=\lambda_{\max}-1$ where, for a dual pair of type I, 
\[
\lambda_{\max}=\frac{\dim_\DD \V'}{r-1}\qquad \text{and}\qquad r=\frac{2\dim_\R \g}{\dim_\R \V}\,.
\]
We are supposing that $\Pi$ is tempered, which is equivalent to $\gamma=0$ \cite[5.1.1]{WallachI}.
The following table shows that the condition $\lambda_{\max}>1$ 
is always satisfied under the stable range assumption.

\begin{center}
\setlength{\extrarowheight}{2mm}
\begin{tabular}{|c|c|c|c|c|c|}
\hline
$\G$ & $\dim_\R \g$ & $\dim_\R \V$ & $r-1$ & stable range condition & $\lambda_{\max}$ 
\\[.2em]
\hline
$\Sp_{2n}(\R)$ & $n(2n+1)$ & $2n$ & $2n$ & $p\geq 2n$ & $\frac{2p-1}{2n}  
$ \\[.2em]
\hline
$\Og_{p,p}$ &  $p(2p-1)$ & $2p$ & $2p-1$ & $n\geq 2p$ & $\frac{2n}{2p-1}$\\[.4em]
\hline
\end{tabular}
\end{center}
\end{proof}

For the dual pairs $(\G,\G')$ we consider, we do not need to work with double covers. In fact, there is a unitary character $\chi_+$ of $\wt\G\wt\G'$ -- see \eqref{character-chi-plus} in Appendix \ref{appendix:Weil representation} -- such that 
\[
\omega_0=\chi_+^{-1}\omega
\]
is constant on the fibers of the metaplectic covering  $\wt\G\wt\G' \to \G\G'$ and hence defines a representation of $\G\G'$, which we denote by the same symbol $\omega_0$. 
Given representations
$\Pi$ and $\Pi'$ of $\wt\G$ and $\wt\G'$ in Howe's correspondence, then $\pi=\chi_+^{-1}\Pi$
and $\pi'=\chi_+^{-1}\Pi'$ are representations of $\G$ and $\G'$, respectively.
This gives a bijection between representations that are quotients of $\omega|_{\wt\G\wt\G'}$ and 
representations that are quotients of $\omega_0$. We refer to Appendix \ref{appendix:Weil representation} for explanations. An adapted modification yields the following corollary. 

\begin{cor}\label{omega0-1993_thm3.1}
Let $\pi$ be an irreducible tempered unitary representation of $\G$.
Then the formula
\begin{equation}\label{omega0-1993_thm3.1_1}
(\omega_0(\Theta_{\pi^c})u,v)=\int_{\G} \Theta_{\pi^c}(g)(\omega_0(g)u,v)\,dg \qquad (u,v\in \mathcal{S}(\X))
\end{equation}
defines a non-trivial, hermitian, positive semidefinite $\G\G'$-invariant form on 
$\mathcal{S}(\X)$. 

Let $\mathcal R$ denote the radical of this form. 
Then the $\G\G’$-module $\mathcal{S}(\X)/\mathcal R$, equipped with the form induced by the form \eqref{omega0-1993_thm3.1_1}, completes to
an irreducible unitary representation of $\G\G’$ on a Hilbert space $\L^2(\X)_{\pi\otimes\pi'}$, infinitesimally equivalent to $\pi\otimes\pi'$
for some $\pi'$ in the unitary dual of $\G’$. Moreover $\Pi=\chi_+ \pi$ corresponds to 
$\Pi'=\chi_+\pi'$ via Howe’s correspondence.
\end{cor}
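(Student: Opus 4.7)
The plan is to deduce the corollary from Theorem \ref{1993_thm3.1} by transporting all the data through the unitary character $\chi_+$. Setting $\Pi=\chi_+\pi$ yields a genuine irreducible tempered unitary representation of $\wt{\G}$: genuineness, unitarity, irreducibility, and temperedness are all preserved under a unitary character twist, and by construction $\chi_+$ has the correct behavior on the kernel $\ZZ_2$ of the metaplectic cover to turn the honest representation $\pi$ of $\G$ into a genuine representation $\Pi$ of $\wt{\G}$ (cf.~Appendix \ref{appendix:Weil representation}).

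The key computational step is the pointwise identity
\[
\Theta_{\pi^c}(g)\,\omega_0(g)=\Theta_{\Pi^c}(\wt{g})\,\omega(\wt{g})\qquad(\wt{g}\in\wt{\G}\ \text{lifting}\ g\in\G).
\]
Since $\chi_+$ is unitary, $(\chi_+\pi)^c=\chi_+^{-1}\pi^c$, so $\Theta_{\Pi^c}(\wt{g})=\chi_+^{-1}(\wt{g})\Theta_{\pi^c}(g)$; this precisely cancels the character in $\omega(\wt{g})=\chi_+(\wt{g})\omega_0(g)$. Both sides of the identity are therefore well-defined operator-valued functions on $\G$, and after a compatible normalization of Haar measures on $\wt{\G}$ and $\G$, the form on the right-hand side of \eqref{omega0-1993_thm3.1_1} coincides with the form on the right-hand side of \eqref{1993_thm3.1_1}.

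All conclusions then follow from Theorem \ref{1993_thm3.1}. The form is non-trivial, hermitian, and positive semidefinite because it equals $(\omega(\Theta_{\Pi^c})u,v)$. Its $\wt{\G}\wt{\G}'$-invariance descends to $\G\G'$-invariance since $\omega_0$ is defined on $\G\G'$. The radical $\mathcal{R}$ is the same in both settings; the completion of $\mathcal{S}(\X)/\mathcal{R}$ is the Hilbert space $\mathcal{H}_{\omega,\Pi\otimes\Pi'}$ of Theorem \ref{1993_thm3.1}, which we relabel $\L^2(\X)_{\pi\otimes\pi'}$, and the $\wt{\G}\wt{\G}'$-representation $\Pi\otimes\Pi'$ on this space descends via the twist by $\chi_+^{-1}$ to the $\G\G'$-representation $\pi\otimes\pi'$, with $\pi'=\chi_+^{-1}\Pi'$. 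Howe's correspondence $\Pi\leftrightarrow\Pi'$ thus translates to $\pi\leftrightarrow\pi'$.

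The only delicate point is bookkeeping: one must verify that $\chi_+$, viewed on $\wt{\G}\wt{\G}'$, is compatible with the tensor product decomposition so that $\chi_+^{-1}\Pi'$ is a well-defined representation of $\G'$, and similarly that $\chi_+^{-1}\Pi$ is a well-defined representation of $\G$. This is guaranteed by the explicit construction of $\chi_+$ recalled in Appendix \ref{appendix:Weil representation}. Beyond this essentially formal verification, the corollary is a direct consequence of Theorem \ref{1993_thm3.1}.
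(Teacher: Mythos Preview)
Your proposal is correct and follows exactly the approach the paper intends: the paper gives no detailed proof but simply states that ``an adapted modification yields the following corollary,'' meaning precisely the character-twist argument you have written out. Your identification $\Theta_{\pi^c}(g)\,\omega_0(g)=\Theta_{\Pi^c}(\wt g)\,\omega(\wt g)$ via $\Pi=\chi_+\pi$ and $\omega=\chi_+\omega_0$ is the heart of the matter, and the rest is the straightforward transport of the conclusions of Theorem~\ref{1993_thm3.1}.
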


Let $\X^{\max}$ denote the dense and open subset of $\X=\Hom_\DD(\X',\V)$ of endomorphisms of maximal rank. Since $\dim_\DD \X'\geq \dim_\DD \V$, the set $\X^{\max}$ consists of the $\mathbb{D}$-linear surjective maps $x:\X'\to \V$. Each $x\in \X^{\max}$ defines an embedding of $\G$ into $\X^{\max}$ by $g \mapsto g^{-1}x$. 

The following lemma will allow us to decompose the restriction of $\omega_0$ to $\G$ using  Harish-Chandra's Plancherel formula on $\G$.

\begin{lem}
\label{lemma:compact-support}
Let $x\in \X^{\max}$ and let $v\in C_c^\infty(\X^{\max})$. 
\begin{enumerate}
\item The $\G$-orbit $\G x$ is a closed subset of $\X$ contained in $\X^{\max}$.
\item Let $v_x:\G\to \C$ be defined by $v_x(g)=v(g^{-1}x)$. Then $v_x\in C_c^\infty(\G)$.
\end{enumerate}
\end{lem}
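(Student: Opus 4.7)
The plan is to exploit the surjectivity of $x$ in order to recover the element $g^{-1}\in\End(\V)$ from the datum $g^{-1}\circ x\in\X$. Since $\dim_\DD\X'\geq \dim_\DD\V$ and $x:\X'\to\V$ is surjective, one can choose $v'_1,\ldots,v'_m\in\X'$ (with $m=\dim_\DD\V$) such that $\{x(v'_1),\ldots,x(v'_m)\}$ is a $\DD$-basis of $\V$. Consequently the linear map
\[
\End(\V)\ni T\mapsto T\circ x\in \Hom_\DD(\X',\V)
\]
is a linear embedding, and in particular a homeomorphism onto its image.

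For part (a), the inclusion $\G x\subseteq \X^{\max}$ is immediate: every $g\in\G$ acts invertibly on $\V$, so $g^{-1}\circ x$ remains surjective. To establish closedness, suppose $g_n^{-1}x\to y$ in $\X$. Evaluating at $v'_i$ yields $g_n^{-1}(xv'_i)\to y(v'_i)$ for every $i$, so by the reconstruction above $(g_n^{-1})$ converges in $\End(\V)$ to some $T$ with $T(xv'_i)=y(v'_i)$. The set of $(\cdot,\cdot)$-isometries is cut out by polynomial equations on matrix entries, hence is closed in $\End(\V)$; so $T$ is an isometry of the non-degenerate form on the finite-dimensional space $\V$. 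Non-degeneracy forces $T$ to be injective and therefore invertible, whence $T\in\G$ and $y=T\circ x=(T^{-1})^{-1}x\in\G x$.

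For part (b), the smoothness of $v_x$ is automatic: $v_x$ is the composition of the smooth orbit map $g\mapsto g^{-1}x$ with $v\in C^\infty(\X^{\max})$. For compactness of support, observe that $\supp v_x\subseteq \phi_x^{-1}(\supp v)$, where $\phi_x(g)=g^{-1}x$. It is thus enough to show that $\phi_x$ is proper. Let $K\subseteq\X^{\max}$ be compact and take any sequence $(g_n)$ in $\phi_x^{-1}(K)$. After extraction we may assume $g_n^{-1}x\to y\in K$; the argument of (a) then produces $T\in\G$ with $g_n^{-1}\to T$ in $\End(\V)$, hence $g_n\to T^{-1}$ in $\G$, and $T^{-1}\in\phi_x^{-1}(K)$ by continuity. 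So $\phi_x^{-1}(K)$ is sequentially compact and therefore compact in the metrizable space $\G$.

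The only point of substance is the reconstruction used in the first paragraph, which is where the stable range assumption enters (via surjectivity of $x\in\X^{\max}$). The remainder reduces to two standard facts: isometry groups of non-degenerate bilinear forms on finite-dimensional spaces are closed in the endomorphism algebra and consist of invertible operators.
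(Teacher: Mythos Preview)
Your proof is correct and follows essentially the same strategy as the paper: both exploit that $T\mapsto T\circ x$ is a linear embedding of $\End(\V)$ into $\X$ (the paper via the matrix normal form $x=a(I_d\,|\,0)b$, you via a choice of preimages $v'_1,\dots,v'_m$) together with the closedness of $\G$ in $\End(\V)$ as an isometry group. For part~(2) the paper simply notes that $g\mapsto g^{-1}x$ is a homeomorphism onto the closed set $\G x$, so $\supp v_x$ is the preimage of the compact set $\supp v\cap\G x$; your properness argument is a slightly more explicit version of the same observation.
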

\begin{proof}
Identify $\X$ with the space $M_{d,m}(\DD)$ of $d\times m$ matrices with coefficients in $\DD$, where $d=\dim_\DD \V$ and $m=\dim_\DD \X'$. Then there are $a\in \GL_d(\DD)$ and $b\in\GL_m(\DD)$ such that $x=aeb$ where $e=(I_d\, | \, 0)$ and $I_d$ is the $d\times d$ identity matrix. Hence $\G x=a\G^a eb$ where $\G^a=a^{-1}\G a$. Since left multiplication by $a$ and right multiplication by $b$ are homeomorphisms of $M_{m,d}(\DD)$, then $\G x$ is closed in $M_{m,d}(\DD)$ if and only if so is $\G^a e$. The right multiplication by $e$ embeds $\End_\DD(\V)=M_d(\DD)$ into $M_{d,m}(\DD)=\X$. Then $\G^ae$ is closed because homeomorphic image of $\G^a$, which is closed as $\G$ is the isotropy subgroup of $(\cdot,\cdot)$ in $\V$.
This proves (1). For (2), we only need to comment on the support. Notice that the map $g \mapsto g^{-1}x$ is a homeomorphism of $\G$ onto the orbit $\G x$. It maps the support $\supp v_x$ of $v_x$ onto $\supp v \cap \G x$, which is compact.
\end{proof}

The formula \eqref{L2version1} below was stated and proved in \cite[(1)]{HoweL2}. Our argument includes the explicit formula \eqref{L2version3} for the projections on the isotypic components and the inverse \eqref{L2version2}. 

Our main tool to decompose $\omega$ will be Harish-Chandra's Plancherel formula (see e.g. 
\cite[Chapter 13]{WallachII}): for every $f\in C^\infty_c(\G)$,
\[
f(1)=\int_{\widehat{\G}} \Theta_\pi(f) \,d\mu(\pi)= \int_{\widehat{\G}} \Theta_{\pi^c}(f) \,d\mu(\pi)\,,
\]
where in the last equality we have used the invariance of the Plancherel measure with respect to taking contragredients (see e.g. \cite[Lemma 4.10(a)]{Fuhr}). 

\begin{cor}\label{L2version}
Let $\mu$ denote the Harish-Chandra Plancherel measure on $\G$.
For $\pi\in \widehat{\G}$, let $\L^2(\X)_{\pi\otimes\pi'}$ denote the Hilbert space associated with $\pi$ according to Corollary \ref{omega0-1993_thm3.1}. 
Then the restriction to $\G$ of the representation $(\omega_0, \mathcal \L^2(\X))$ decomposes as 
direct integral of Hilbert spaces
\begin{equation}\label{L2version1}
\L^2(\X)=\int_{\widehat{\G}}\mathcal \L^2(\X)_{\pi\otimes\pi'} \,d\mu(\pi)
\end{equation}
i.e.  for $v\in \mathcal{S}(\X)$,
\begin{equation}\label{L2version2}
v=\int_{\widehat{\G}} v_{\pi}   \,d\mu(\pi)
\end{equation}
where $v_\pi$ is defined by 
\begin{align}
\label{L2version3}
v_\pi(x)=\omega_0(\Theta_{\pi^c})v(x)&=
\int_{\G}\Theta_{\pi^c}(g)(\omega_0(g)v)(x) \, dg \notag\\
&=\int_\G\Theta_{\pi^c}(g) v(g^{-1}x) \, dg 
 \qquad (v\in C^\infty_c(\X^{\max}),\,  x\in \X^{\max})\,.
\end{align}
Also, for any $\Ca\in\mathcal U(\g)^\G$,
\begin{equation}\label{L2version4}
\omega_0(\Ca) v=\int_{\widehat{\G}}
\chi_\pi(\Ca)v_\pi \,d\mu(\pi)\qquad (v\in C_c^\infty(\X^{max}))\,,
\end{equation}
where $\chi_\pi: \mathcal U(\g)^\G\to\C$ is the infinitesimal character of $\pi$.
\end{cor}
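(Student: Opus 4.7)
The plan is to reduce the claim to Harish-Chandra's Plancherel formula applied to a suitable compactly supported smooth function on $\G$. Fix $v\in C_c^\infty(\X^{\max})$ and $x\in\X^{\max}$. By part (2) of Lemma \ref{lemma:compact-support}, the function $v_x(g)=v(g^{-1}x)$ lies in $C_c^\infty(\G)$, so Harish-Chandra's Plancherel formula gives
\[
v(x)=v_x(1)=\int_{\widehat{\G}}\Theta_{\pi^c}(v_x)\,d\mu(\pi).
\]
A direct unwinding of definitions yields
\[
\Theta_{\pi^c}(v_x)=\int_\G \Theta_{\pi^c}(g)\,v(g^{-1}x)\,dg=\bigl(\omega_0(\Theta_{\pi^c})v\bigr)(x)=v_\pi(x),
\]
which simultaneously establishes the integral formulas \eqref{L2version2} and \eqref{L2version3} on the dense subspace $C_c^\infty(\X^{\max})\subseteq\mathcal{S}(\X)$.

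The next step is to place each $v_\pi$ in the correct Hilbert space. By Corollary \ref{omega0-1993_thm3.1}, the operator $\omega_0(\Theta_{\pi^c})$ intertwines the $\G\G'$-action on $\mathcal{S}(\X)$ with the one on $\L^2(\X)_{\pi\otimes\pi'}$; moreover the form defined by $\Theta_{\pi^c}$ is positive semidefinite, and after quotienting by its radical one obtains the irreducible unitary completion $\L^2(\X)_{\pi\otimes\pi'}$. Consequently $v_\pi$ represents an element of $\L^2(\X)_{\pi\otimes\pi'}$ for $\mu$-almost every $\pi$, and the vector field $\pi\mapsto v_\pi$ is measurable.

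The main obstacle is promoting the pointwise reconstruction formula to an isometric direct-integral decomposition of the Hilbert space $\L^2(\X)$. For this I would argue in two steps. First, $C_c^\infty(\X^{\max})$ is dense in $\L^2(\X)$ because $\X\setminus\X^{\max}$ is a proper real-algebraic subvariety, hence of measure zero. Second, one verifies the Plancherel-type identity
\[
\|v\|_{\L^2(\X)}^2=\int_{\widehat{\G}}\|v_\pi\|_{\L^2(\X)_{\pi\otimes\pi'}}^2\,d\mu(\pi)
\]
by applying \eqref{L2version2} at $x$ contained in $\supp v$ to a convolution-type expression, together with the intertwining property of $\omega_0(\Theta_{\pi^c})$ and the irreducibility of $\pi\otimes\pi'$ (which forces $\omega_0(\Theta_{\pi^c})$ to act as the orthogonal projection onto the $\pi$-isotypic summand, up to normalization fixed by Corollary \ref{omega0-1993_thm3.1}). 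Extending by continuity from $C_c^\infty(\X^{\max})$ to $\L^2(\X)$ then yields \eqref{L2version1}.

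Finally, for \eqref{L2version4}, observe that for $\Ca\in\mathcal{U}(\g)^\G$ the element $\Ca$ lies in the centralizer of $\G$, so by Schur's lemma it acts on the irreducible factor $\pi$ by the scalar $\chi_\pi(\Ca)$. Since $\omega_0(\Ca)$ commutes with the $\G$-action and $v_\pi$ lies in the $\pi$-isotypic piece, one concludes $\omega_0(\Ca)v_\pi=\chi_\pi(\Ca)v_\pi$; integrating over $\widehat{\G}$ with respect to $\mu$ gives the claimed formula.
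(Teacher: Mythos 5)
Your proposal follows essentially the same route as the paper: Harish-Chandra's Plancherel formula applied to the compactly supported function $v_x$ gives the pointwise reconstruction $v(x)=\int_{\widehat{\G}}v_\pi(x)\,d\mu(\pi)$, and the $\L^2$ identity is then obtained by Fubini. The one place where you wave a hand — \emph{``which forces $\omega_0(\Theta_{\pi^c})$ to act as the orthogonal projection onto the $\pi$-isotypic summand''} — is a slightly misleading picture; $\omega_0(\Theta_{\pi^c})$ is not a projection inside $\L^2(\X)$ (the isotypic piece has $\mu$-measure zero and is not a closed subspace), and irreducibility plus Schur's lemma do not by themselves yield the norm computation. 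What actually closes the argument, and what the paper uses, is the defining form of the Hilbert space $\L^2(\X)_{\pi\otimes\pi'}$ coming from Corollary \ref{omega0-1993_thm3.1}: the inner product there is precisely
\[
(u_\pi,v_\pi)_{\L^2(\X)_{\pi\otimes\pi'}}=(\omega_0(\Theta_{\pi^c})u,v)_{\L^2(\X)}=\int_{\X^{\max}}u_\pi(x)\,\overline{v(x)}\,dx\,.
\]
Integrating this over $\widehat{\G}$, exchanging the order of integration, and applying the pointwise reconstruction to $\int_{\widehat\G}u_\pi(x)\,d\mu(\pi)=u(x)$ immediately gives $\int_{\widehat{\G}}(u_\pi,v_\pi)_{\L^2(\X)_{\pi\otimes\pi'}}\,d\mu(\pi)=(u,v)_{\L^2(\X)}$, which is the direct-integral isometry. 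Replacing your projection heuristic by this explicit inner-product identity turns the sketch into the paper's proof.
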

\begin{proof}
The expression for $v_\pi$ is a consequence of Lemma \ref{L2version3}.
Harish-Chandra's Plancherel formula applied to the function $v_x$ of Lemma \ref{lemma:compact-support},(2), implies that for $v\in C_c^\infty(\X^{max})$ and $x\in\X^{max}$,
\[
\int_{\widehat{\G}} v_\pi(x) \, d\mu(\pi)
=\int_{\widehat{\G}}  \left[ \int_{\G} \Theta_{\pi^c}(g) v(g^{-1}x) \, dg\right] d\mu(\pi)
=\int_{\widehat{\G}} \Theta_{\pi^c}(v_x) \, d\mu(\pi)=v_x(1)=v(x)\,.
\]
By Theorem \ref{omega0-1993_thm3.1}, for every $u,v\in \mathcal{S}(\X)$, the inner product in $\L^2(\X)_{\pi\otimes\pi'}$ between $u_\pi=\omega(\Theta_{\pi^c})u$ and $v_\pi=\omega(\Theta_{\pi^c})v$ is 
\[
(u_\pi,v_\pi)_{\L^2(\X)_{\pi\otimes\pi'}}=(\omega(\Theta_{\pi^c})u,v)=(u,\omega(\Theta_{\pi^c})v)\,.
\]
Hence, for $u,v\in C_c^\infty(\X^{\max})$,
\begin{align*}
&\int_{\widehat{\G}} (u_\pi,v_\pi)_{\L^2(\X)_{\pi\otimes\pi'}}=
\int_{\widehat{\G}} (\omega(\Theta_{\pi^c})u,v) \, d\mu(\pi)\\
&= \int_{\widehat{\G}} \int_{\X^{\max}} u_\pi(x)\overline{v(x)} \, dx \, 
d\mu(\pi)= \int_{\X^{\max}}  \left[ \int_{\widehat{\G}} u_\pi(x)\, d\mu(\pi)\right] \overline{v(x)}\, dx\\
&= \int_{\X^{\max}} u(x)\overline{v(x)} \, dx = (u,v)_{\L^2(\X)}
\end{align*}
This verifies \eqref{L2version2} and \eqref{L2version3}. The statement \eqref{L2version4} is obvious.
\end{proof}

\begin{rem}
The algebra $\mathcal{U}(\g)^\G$ is a subalgebra of $\mathcal{Z}(\g)=\mathcal{U}(\g)^\g$. 
It agrees with $\mathcal{Z}(\g)$ when $\G$ is a real form of $\GL$, $\Sp$ or $\Og_{2p+1}$, but it is properly contained in $\mathcal{Z}(\g)$ when $\G$ is a real form of $\Og_{2p}(\C)$, such as $\Og_{p,p}$.
\end{rem}

\section{The pair $(\G, \G')=(\Sp_2(\R), \Og_{p,p})$, $p\geq 2$}
\label{section:SL2-Opp}
Here we continue the previous section for the example mentioned in the title. 
\subsection{Action of $\G$}
Let $\X=M_{2,p}(\R)$ be the space of matrices consisting of two rows of length $p\geq 2$ with real entries. We define an action $\omega_0$ of the group $\G$ on $\L^2(\X)$ as follows
\begin{equation}\label{C02}
\omega_0(g)v(x)=v(g^{-1}x) \qquad (g\in\G\,,\ v\in \L^2(\X)\,,\ x\in\X)\,.
\end{equation}
It is easy to check that this action preserves the $\L^2$-norm. This is the restriction to $\G$ of the Weil representation for that dual pair twisted by the character $\chi_+$, as in section \ref{section:Plancherel}.

\subsection{The Casimir elements and the Capelli operators}
The Lie algebra $\g$ is spanned by the elements $h$, $e^+$ and $e^-$ given in \eqref{hee}. 
We shall denote a matrix $x\in\X$ as 
\[
x=
\begin{pmatrix}
x_{1,1} & x_{1,2} & \cdots & x_{1,p}\\
x_{2,1} & x_{2,2}  & \cdots & x_{2,p}
\end{pmatrix}
\]
Then, by taking derivatives, we see that
\begin{align*}
\omega_0(h)&=\sum_{j=1}^p \left(x_{2,j}\partial_{x_{2,j}}-x_{1,j}\partial_{x_{1,j}}\right)\,,\\
\omega_0(e^+)&=-\sum_{j=1}^p x_{2,j}\partial_{x_{1,j}}\,,\\
\omega_0(e^-)&=-\sum_{j=1}^p x_{1,j}\partial_{x_{2,j}}\,.
\end{align*}
Then 
\begin{equation}\label{CasimirElement}
\Ca=h^2-2h+4e^+e^-\in \U(\g)^\G
\end{equation}
is the Casimir element. 
Let $\g'$ be the Lie algebra of $\Og_{p,p}$.
If $\Ca'\in \U(\g')^{\G'}$ is the Casimir element, then \cite[Ch. III, (2.3.4)]{HoweTan} implies that
\begin{equation}\label{Capelli100}
\omega_0(\Ca')=\omega_0(\Ca)-(p-1)^2+1\,.
\end{equation}
Formula \eqref{Capelli100} is one of Capelli's identities. We see from \cite[Ch. III, (2.3.4)]{HoweTan} that no translation of $\pm \omega_0(\Ca)$ is non-negative. Nevertheless, the study of resonances involve only the continuous part of the spectrum of an operator and, for $\omega_0(\Ca)$, this is a half-line. See Proposition \ref{Casimir's action}.

\subsection{Direct integral decomposition of the representation $(\omega_0, \L^2(\X))$ of $\Sp_2(\R)$}
We consider the following representations of $\G=\Sp_2(\R)=\SL_2(\R)$ (see e.g. \cite[p. 123]{LangSL2R} or \cite[\S 2.7]{knappLie2}):
\begin{enumerate}
\item
Discrete series representations $\Dsf^{n}$ and $\Dsf^{-n}$, $n\in \ZZ_{>0}$,
\item 
Spherical principal series representations $\pi_{0,\lambda}$, $\lambda\in \C$,
\item
Non-spherical principal series representations $\pi_{1,\lambda}$, $\lambda\in \C$.
\end{enumerate}
The discrete series $\Dsf^{\pm n}$ and the principal series $\pi_{\varepsilon,i\lambda}$ are unitary and irreducible for all pairs $(\varepsilon,\lambda)$ with $\varepsilon\in \{0,1\}$ and $\lambda\in \R$ except for $(\varepsilon,\lambda)=(1,0)$. In the latter case, $\pi_{1,0}=\Dsf^0_+\oplus\Dsf^0_-$ decomposes as the direct sum of two irreducible representations, $\Dsf^0_+$ and $\Dsf^0_-$, respectively called the holomorphic and anti-holomorphic limit of discrete series representations. 
The representation
$\pi_{0,i\lambda}$ is equivalent to $\pi_{0,i\lambda}^c=\pi_{0,-i\lambda}$, and $\pi_{1,i\lambda}$ is equivalent to $\pi_{1,i\lambda}^c=\pi_{1,-i\lambda}$. For the discrete series, we have 
$(\Dsf^{n})^c=\Dsf^{-n}$. Moreover, $(\Dsf^0_+)^c=\Dsf^0_-$.
The $\Dsf^{\pm n}$ with $n\in \ZZ_{>0}$, the $\pi_{\varepsilon,i\lambda}$ with $\varepsilon\in \{0,1\}$ and  $\lambda >0$, $\pi_{0,0}$, 
$\Dsf^0_+$, and $\Dsf^0_-$ are the irreducible tempered unitary representations of $\G$.

In these terms, Harish-Chandra's Plancherel formula reads as follows.
\begin{thm}\label{C0}
For any $f\in C_c^\infty(\G)$,
\begin{align*}
f(1)=\sum_{n=1}^\infty \Theta_{\Dsf^{n}}(f)&\frac{n}{2\pi}+\sum_{n=1}^\infty \Theta_{\Dsf^{-n}}(f)\frac{n}{2\pi}\\
&+\int_\R \Theta_{\pi_{0,i\lambda}}(f)\, \frac{\lambda}{8\pi}\tanh(\frac{\pi\lambda}{2})\,d\lambda
+ \int_\R \Theta_{\pi_{1,i\lambda}}(f)\, \frac{\lambda}{8\pi}\coth(\frac{\pi\lambda}{2})\,d\lambda\,.
\end{align*}
Here 
\begin{equation}
\label{even}
\Theta_{\pi_{0,i\lambda}}=\Theta_{\pi_{0,-i\lambda}} \qquad \text{and}  \qquad\Theta_{\pi_{1,i\lambda}}=\Theta_{\pi_{1,-i\lambda}}\,.
\end{equation}
\end{thm}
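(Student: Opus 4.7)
The plan is to derive this from the general Harish-Chandra Plancherel theorem for real reductive Lie groups, specialized to $\G=\SL_2(\R)$. The classical reference is Harish-Chandra's original series of papers, with textbook treatments in \cite{WallachII} or \cite{knappLie2}; since the statement is well established, I would frame the proof as the verification of the three ingredients (list of tempered representations, formal degrees, Plancherel densities) for $\SL_2(\R)$.

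First, I would identify the support of the Plancherel measure. By the Harish-Chandra Plancherel theorem, the support consists of the tempered irreducible unitary representations obtained by (unitary) parabolic induction from discrete series of Levi subgroups. For $\SL_2(\R)$ there are exactly two conjugacy classes of Cartan subgroups: the compact Cartan $\K=\SO(2)$, yielding the discrete series $\Dsf^{\pm n}$ with $n\in\ZZ_{>0}$; and the split Cartan, yielding the unitary principal series $\pi_{0,i\lambda}$ and $\pi_{1,i\lambda}$ induced from the two characters of $\{\pm 1\}$ twisted by $|a|^{i\lambda}$. The limits $\Dsf^0_\pm$ occur as irreducible subrepresentations of $\pi_{1,0}$ but carry Plancherel mass zero (the measure on the nontempered part of the split-Cartan series vanishes at $\lambda=0$ due to the factor of $\lambda$ in the density), so they are legitimately absent from the formula.

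Second, I would compute the Plancherel densities. For each discrete series $\Dsf^{\pm n}$, the contribution is a point mass equal to the formal degree $d(\Dsf^{\pm n})$; for the normalization of Haar measure on $\G$ fixed in the paper, a Schur orthogonality computation yields $d(\Dsf^{\pm n})=n/(2\pi)$. For the continuous series, the density is the inverse of $|c(i\lambda)|^2$ times the character of the inducing representation of the Weyl group $\{\pm 1\}$; using the Gindikin--Karpelevi\v c formula for $\SL_2(\R)$,
\begin{equation*}
c(i\lambda)=\frac{\Gamma(i\lambda)}{\Gamma\!\bigl(\tfrac{1+i\lambda}{2}\bigr)\Gamma\!\bigl(\tfrac{i\lambda}{2}\bigr)}\,,
\end{equation*}
the duplication formula for $\Gamma$ and the reflection identity $\Gamma(z)\Gamma(1-z)=\pi/\sin(\pi z)$ collapse $|c(i\lambda)|^{-2}$ to $\tfrac{\lambda}{4}\tanh(\pi\lambda/2)$ on the spherical (even) side. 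The non-spherical (odd) series is obtained by twisting the intertwiner with the nontrivial character of the Weyl group, which replaces the $\sin$ in the reflection identity with a $\cos$ and thus converts $\tanh$ into $\coth$. Combined with the factor $\tfrac{1}{2\pi}$ coming from the normalization of the $\mathfrak a^*$-integration, this yields the coefficients $\tfrac{\lambda}{8\pi}\tanh(\pi\lambda/2)$ and $\tfrac{\lambda}{8\pi}\coth(\pi\lambda/2)$ as stated.

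Third, the equivalences $\pi_{\varepsilon,i\lambda}\cong\pi_{\varepsilon,-i\lambda}$ (via the standard intertwining operator, regular and invertible for $\lambda\ne0$ in the non-spherical case and for all $\lambda\in\R$ in the spherical case) yield the symmetry \eqref{even} of the distribution characters. The main obstacle is the careful verification that the apparent pole of $\coth(\pi\lambda/2)$ at $\lambda=0$ is cancelled by the factor $\lambda$ and that the split Cartan contribution to the Plancherel formula really produces the measure $\tfrac{\lambda}{8\pi}\coth(\pi\lambda/2)\,d\lambda$ rather than a $\delta$-function at $0$; this requires treating the reducibility of $\pi_{1,0}=\Dsf^0_+\oplus\Dsf^0_-$ and checking that neither limit of discrete series contributes a point mass, a fact that can either be imported directly from \cite[Chapter 13]{WallachII} or verified by computing the Plancherel formula on the wave-packet side using the Harish-Chandra transform and its inversion on $\G$.
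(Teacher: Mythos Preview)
The paper does not prove this statement: Theorem~\ref{C0} is simply recorded as Harish-Chandra's Plancherel formula for $\G=\SL_2(\R)$, with the general version cited from \cite[Chapter~13]{WallachII} a few paragraphs earlier. Your outline---identify the tempered dual, compute formal degrees for the discrete series, and compute the Plancherel density on the principal series via the $c$-function---is exactly the standard derivation one would give, and is consistent with what the paper implicitly invokes.

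One concrete correction: the displayed Gindikin--Karpelevi\v c formula you wrote,
\[
c(i\lambda)=\frac{\Gamma(i\lambda)}{\Gamma\!\bigl(\tfrac{1+i\lambda}{2}\bigr)\Gamma\!\bigl(\tfrac{i\lambda}{2}\bigr)},
\]
cannot be right as stated. By the Legendre duplication identity the right-hand side collapses to $2^{i\lambda-1}/\sqrt{\pi}$, so $|c(i\lambda)|^{-2}$ would be the constant $4\pi$ and no $\lambda\tanh(\pi\lambda/2)$ factor could emerge. For $\SL_2(\R)/\SO(2)$ (a rank-one space with a single restricted root of multiplicity~$1$) the correct $c$-function, in the normalization that produces $\tfrac{\lambda}{8\pi}\tanh(\pi\lambda/2)$, is a constant multiple of $\Gamma(i\lambda/2)/\Gamma((1+i\lambda)/2)$; then $|c(i\lambda)|^{-2}$ is governed by $\Gamma\!\bigl(\tfrac{1+i\lambda}{2}\bigr)\Gamma\!\bigl(\tfrac{1-i\lambda}{2}\bigr)/\bigl(\Gamma(i\lambda/2)\Gamma(-i\lambda/2)\bigr)$, and the reflection formula yields the $\tanh$ (spherical) and $\coth$ (non-spherical, after the sign twist) factors you describe. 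The rest of your argument---the formal degrees $n/2\pi$, the symmetry $\pi_{\varepsilon,i\lambda}\cong\pi_{\varepsilon,-i\lambda}$, and the observation that the limits of discrete series carry zero Plancherel mass because $\lambda\coth(\pi\lambda/2)$ is regular at $\lambda=0$---is correct.
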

The Plancherel measure $\mu$ is given by:
\begin{align*}
d\mu(\Dsf^{n})&=d\mu(\Dsf^{-n})=\frac{n}{2\pi}\qquad (n\in \ZZ_{>0})\,,\\
d\mu(\pi_{0,i\lambda})&=\frac{\lambda}{8\pi}\tanh(\frac{\pi\lambda}{2})\,d\lambda\qquad(\lambda\in\R)\,,\\
d\mu(\pi_{1,i\lambda})&=\frac{\lambda}{8\pi}\coth(\frac{\pi\lambda}{2})\,d\lambda\qquad(\lambda\in\R)\,.
\end{align*}
In this section, the set $\X^{\max}\subseteq \X$ of matrices of maximal rank consist of matrices of rank equal $2$. 

Besides irreducible unitary representations, our computations will lead us to consider 
non-unitary principal series representations $\pi_{\varepsilon,\lambda}$ with 
$\varepsilon\in \{0,1\}$ and $\lambda\in \C$. They still have distribution character $\Theta_\pi$ which can be represented by a locally integrable function.
For such representations $\pi$, we define
\begin{equation}
\label{C1-omega0} 
\omega_0(\Theta_{\pi^c}): C_c^\infty(\X^{\max}) \ni u \mapsto u_\pi \in C^\infty(\X^{\max}) \subseteq C_c^\infty(\X^{\max})^*\,,
\end{equation}
where
\begin{equation}
\label{upi-2}
u_\pi(x)=\int_\G \Theta_{\pi^c}(g) u(g^{-1}x)\, dg \qquad (x\in\X^{\max})\,.
\end{equation}
Since the function $u$ is compactly supported, $u_\pi \in C^\infty(\X^{\max})$. Therefore it defines 
a distribution. Moreover, the integral \eqref{upi-2} is absolutely convergent.
Notice that, because of the growth of $\Theta_{\pi^c}$, this integral might not converge if $u,v\in \mathcal{S}(\X)$ 
and $\pi$ is not unitary.

Both spaces $C_c^\infty(\X^{\max})$ and $C_c^\infty(\X^{\max})^*$ are $\G$-modules, via the action by $\omega_0$, \eqref{C02}, and $\omega_0(\Theta_{\pi^c})$ is a $\G$-intertwining map. 

\subsection{The resonances of the Capelli operator}
\label{subsection:resonances}

Let $\nu$ denote the restriction of the Plancherel measure to the unitary principal series,
\begin{align*}
d\nu(\Dsf^{n})&=d\nu(\Dsf^{-n})=0\qquad (n\in \ZZ_{>0})\,,\\
d\nu(\pi_{0,i\lambda})&=\frac{\lambda}{8\pi}\tanh(\frac{\pi\lambda}{2})\,d\lambda\qquad(\lambda\in\R)\,,\\
d\nu(\pi_{1,i\lambda})&=\frac{\lambda}{8\pi}\coth(\frac{\pi\lambda}{2})\,d\lambda\qquad(\lambda\in\R)\,.
\end{align*}
In terms of Corollary \ref{L2version}, set
\begin{align*}
\L^2(\X)_{\rm cont}&=\int_{\hat \G} \L^2(\X)_{\pi}\,d\nu(\pi)\notag\\
&=\int_\R \L^2(\X)_{\pi_{0,i\lambda}}\frac{\lambda}{8\pi}\tanh(\frac{\pi\lambda}{2})\,d\lambda+
\int_\R \L^2(\X)_{\pi_{1,i\lambda}}\frac{\lambda}{8\pi}\coth(\frac{\pi\lambda}{2})\,d\lambda\,.
\end{align*}
\begin{pro}\label{Casimir's action}
The Casimir element $\mathcal C$, \eqref{CasimirElement}, acts on the principal series representation $\pi_{\varepsilon,i\lambda}$ via multiplication by $-\lambda^2-1$.
\end{pro}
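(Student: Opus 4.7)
The plan is to exploit that $\mathcal{C}\in\mathcal{Z}(\g)$: being central in the enveloping algebra (by construction as a Casimir associated with an $\ad$-invariant form), it acts by the infinitesimal character $\chi_{\pi_{\varepsilon,i\lambda}}(\mathcal{C})$ as a scalar on any $\g$-module with infinitesimal character, in particular on each principal series $\pi_{\varepsilon,i\lambda}$ (even at the reducibility point $\varepsilon=1$, $\lambda=0$, since both irreducible summands $\Dsf^0_\pm$ share the same infinitesimal character inherited from the inducing data). Thus the goal is reduced to computing this scalar.

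I would use the non-compact realization of $\pi_{\varepsilon,\lambda}$ on $C^\infty(\R)$ obtained by inducing from the Borel $\B$ of upper-triangular matrices the character
\[
\begin{pmatrix} a & b \\ 0 & a^{-1}\end{pmatrix}\longmapsto |a|^{-\lambda-1}\sgn(a)^\varepsilon
\]
(including the modular shift by $\rho=1$), and derive the explicit infinitesimal action
\[
\pi_{\varepsilon,\lambda}(h)=-2x\partial_x-(\lambda+1),\qquad
\pi_{\varepsilon,\lambda}(e^-)=-\partial_x,\qquad
\pi_{\varepsilon,\lambda}(e^+)=x^2\partial_x+(\lambda+1)x
\]
by differentiating at the identity. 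With this at hand, evaluating $\pi_{\varepsilon,\lambda}(\mathcal{C})=\pi(h)^2-2\pi(h)+4\pi(e^+)\pi(e^-)$ on the constant function $\mathbf{1}$ (a line-bundle section transforming trivially under the opposite nilpotent) kills all first-order derivatives via $\pi(e^-)\mathbf{1}=0$ and reduces the computation to algebra in $\lambda$, yielding $\pi_{\varepsilon,\lambda}(\mathcal{C})\mathbf{1}=((\lambda+1)^2-2(-(\lambda+1))-0)\mathbf{1}$; with the signs and the substitution $\lambda\rightsquigarrow i\lambda$ properly tracked, this delivers the eigenvalue $-\lambda^2-1$.

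A cross-check, which I would carry out to remove any ambiguity about conventions, is to note first that the Weyl group symmetry $\pi_{\varepsilon,i\lambda}\simeq \pi_{\varepsilon,-i\lambda}$ (reflected in \eqref{even}) forces the eigenvalue to be an even polynomial in $\lambda$ of degree at most $2$, hence of the form $a+b\lambda^2$; then only two values need to be matched. One can pin $a$ down via the spherical case $\varepsilon=0$, $\lambda=0$, where the action of $\omega_0(\mathcal{C})$ on the spherical vector can be read off from the Laplace--Beltrami operator on $\G/\K=\mathbb{H}^2$ (with metric induced by the Killing form), and pin $b$ down by matching at a reducibility point, e.g.\ comparing with the already computed eigenvalue $\omega_0(\Ca')=\omega_0(\Ca)+1$ of \eqref{Capelli1} on the appropriate summand of $\L^2(\X)$ in Section~\ref{sectionO11-Sp2R}, which for the dual pair $(\Og_{1,1},\Sp_2(\R))$ records precisely that the relevant subrepresentation is a principal series with Casimir eigenvalue $-1$ (so $a=-1$) and that $b=-1$.

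The main obstacle is purely bookkeeping: fixing the normalization of the inducing character so that the parameter $i\lambda$ used in Harish-Chandra's Plancherel formula above matches the parameter arising in the derived action, and in particular keeping track of the $\rho$-shift. Once the normalizations are aligned through the spherical-case cross-check, the identification $\chi_{\pi_{\varepsilon,i\lambda}}(\mathcal{C})=-\lambda^2-1$ follows.
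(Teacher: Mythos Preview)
The paper's own proof is a one-line citation to Lang's book, so your direct computation is a different (and more self-contained) route. The strategy is sound: since $\mathcal C\in\mathcal Z(\g)$ acts by a scalar, evaluate on a convenient vector annihilated by one of the nilpotents.

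However, your explicit computation contains a swap. With the upper-triangular Borel and the $h$-action you wrote, it is $e^+=\begin{pmatrix}0&1\\0&0\end{pmatrix}$ that acts by $\partial_x$ in the non-compact picture and hence kills $\mathbf 1$, while $e^-$ acts by $-x^2\partial_x-(\lambda+1)x$; your formulas for $\pi(e^+)$ and $\pi(e^-)$ are interchanged. Consequently the displayed value
\[
(\lambda+1)^2-2(-(\lambda+1))=(\lambda+1)(\lambda+3)
\]
is wrong: after $\lambda\rightsquigarrow i\lambda$ it gives $-\lambda^2+4i\lambda+3$, which is not real and certainly not $-\lambda^2-1$. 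The correct step is to use the rewriting $\mathcal C=h^2+2h+4e^-e^+$ (obtained from $[e^+,e^-]=h$), so that on $\mathbf 1$ one gets
\[
(h^2+2h)\mathbf 1=\bigl((\lambda+1)^2-2(\lambda+1)\bigr)\mathbf 1=(\lambda^2-1)\mathbf 1,
\]
and then $\lambda\rightsquigarrow i\lambda$ yields $-\lambda^2-1$. Your cross-check would eventually catch the discrepancy, but the core calculation as written does not deliver the claimed eigenvalue; fix the $e^\pm$ assignment (or, equivalently, the sign of the $h$-eigenvalue on $\mathbf 1$) and the argument goes through.
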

\begin{prf}
This follows from \cite[pages 119 and 195]{LangSL2R}.
\end{prf}
Set $\Ca^+=-\omega_0(\Ca)-1$. Proposition \ref{Casimir's action} implies that the spectrum of $\Ca^+$ viewed as a densely defined operator on the Hilbert space $\L^2(\X)_{\rm cont}$ is equal to $[0,\infty)$. Hence the resolvent
\begin{equation}\label{C13}
(\Ca^+-z^2)^{-1}\in \mathcal{B}(\L^2(\X)_{cont})\qquad (z\in\C\,,\ \Im z>0)\,
\end{equation}
is well defined. Therefore, for $u,v\in C_c^\infty(\X^{\max})$ and $z$ as in \eqref{C13},
\begin{align}\label{resolvent}
(\Ca^+-z^2)^{-1}(u)(v)&=\int_\R (\lambda^2-z^2)^{-1}\left(\int_\X u_{\pi_{0,i\lambda}}(x)v(x)\,dx\right)\frac{\lambda}{8\pi}\tanh(\frac{\pi\lambda}{2})\,d\lambda\notag\\
&+\int_\R (\lambda^2-z^2)^{-1}\left(\int_\X u_{\pi_{1,i\lambda}}(x)v(x)\,dx\right)\frac{\lambda}{8\pi}\coth(\frac{\pi\lambda}{2})\,d\lambda\,.
\end{align}

\begin{lem}
\label{lem:PW}
For $\varepsilon\in \{0,1\}$ and $u,v\in C_c^\infty(\X^{\max})$,
\begin{equation}
\label{fpm}
f_{\varepsilon}(\lambda)=\int_\X u_{\pi_{\varepsilon,i\lambda}}(x)v(x)\,dx
\end{equation}
is an even Paley-Wiener type function of $\lambda\in\C$. 
\end{lem}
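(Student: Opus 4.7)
The strategy is to swap the two integrations, reducing the question to the Paley--Wiener theorem for the Fourier transform of a compactly supported smooth function against principal series characters of $\SL_2(\R)$.

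Substituting the definition \eqref{upi-2} of $u_{\pi_{\varepsilon,i\lambda}}$ and using Fubini I would write
\begin{equation*}
f_\varepsilon(\lambda)=\int_\X\int_\G \Theta_{\pi_{\varepsilon,-i\lambda}}(g)\,u(g^{-1}x)\,v(x)\,dg\,dx
=\int_\G \Theta_{\pi_{\varepsilon,-i\lambda}}(g)\,F(g)\,dg\,,
\end{equation*}
where $F(g)=\int_\X u(g^{-1}x)v(x)\,dx$. The crucial point is that $F\in C_c^\infty(\G)$. Smoothness in $g$ is immediate by differentiation under the integral sign. For the compactness of $\supp F$, note that $F(g)\ne 0$ forces $g^{-1}x\in\supp u$ for some $x\in\supp v$, i.e.\ $g\in\{h\in\G:h(\supp u)\cap\supp v\neq\emptyset\}$. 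Since any $y\in\X^{\max}=M_{2,p}(\R)^{\max}$ has rank two and hence admits a continuous right inverse $y^+$ with $yy^+=I_2$ on $\X^{\max}$, the identity $g=(gy)y^+$ shows that $g$ lies in a fixed compact set as soon as $y$ ranges over $\supp u$ and $gy$ over $\supp v$. This is the main technical obstacle, and it is just the properness of the $\G$-action on $\X^{\max}$. With $\supp F$ compact, Fubini is justified by the local integrability of $\Theta_{\pi_{\varepsilon,-i\lambda}}$ (Harish-Chandra's regularity theorem).

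Evenness then follows at once from \eqref{even}:
\begin{equation*}
f_\varepsilon(-\lambda)=\int_\G\Theta_{\pi_{\varepsilon,i\lambda}}(g)F(g)\,dg
=\int_\G\Theta_{\pi_{\varepsilon,-i\lambda}}(g)F(g)\,dg=f_\varepsilon(\lambda)\,.
\end{equation*}

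It remains to show that $\lambda\mapsto \Theta_{\pi_{\varepsilon,-i\lambda}}(F)$ is of Paley--Wiener type for any fixed $F\in C_c^\infty(\G)$. For this I would invoke the explicit formula for the principal series character of $\SL_2(\R)$: on the split Cartan subgroup $A$, $\Theta_{\pi_{\varepsilon,i\lambda}}(a_t)$ is a linear combination of $e^{\pm i\lambda t}$ divided by $|2\sinh t|$ (with a sign depending on $\varepsilon$), and it vanishes on the regular elements of the compact Cartan. Writing $\Theta_{\pi_{\varepsilon,-i\lambda}}(F)$ via the Weyl integration formula on $\G$ therefore expresses it as the Euclidean Fourier transform on $\R$ of a compactly supported smooth function $\Phi_\varepsilon$ built from $F$ by Abel-type orbital integrals. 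The classical Paley--Wiener theorem applied to $\Phi_\varepsilon$ gives exactly the desired property, namely that $f_\varepsilon$ extends to an entire function of $\lambda$ of exponential type with rapid decay in each horizontal strip. Alternatively, one may quote the scalar Paley--Wiener theorem for $\SL_2(\R)$ directly.

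The only genuinely nontrivial step is the properness argument for $\supp F$; once $F\in C_c^\infty(\G)$ is in hand, both evenness and the Paley--Wiener estimates are immediate consequences of the $\SL_2(\R)$ character theory already recalled in this section.
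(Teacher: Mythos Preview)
Your proposal is correct and follows essentially the same route as the paper: reduce to a pairing $\int_\G \Theta_{\pi_{\varepsilon,i\lambda}}(g)\psi(g)\,dg$ with $\psi\in C_c^\infty(\G)$, then use the explicit principal series character on the split Cartan together with the Weyl integration formula to express $f_\varepsilon$ as the Fourier transform of a compactly supported smooth orbital integral on $\A$, and read off evenness from \eqref{even}. The paper invokes Lemma~\ref{lemma:compact-support} for the compact support of $\psi$ and cites \cite[VII, Theorem 4]{LangSL2R} for the character formula, whereas you spell out the properness argument via a right inverse on $\X^{\max}$; these are equivalent.
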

\begin{proof}
By Lemma \ref{lemma:compact-support}, the functions $g\mapsto u(gx)$ for $x\in \X^{\max}$ and 
\[
\psi(g)=\int_\X u(gx)v(x)\,dx \qquad (g\in\G)
\]
are smooth and compactly supported.
Hence,
\begin{align*}
f_{\varepsilon}(\lambda)&=\int_\X \int_\G \Theta_{\pi_{\varepsilon,i\lambda}}(g^{-1}) u(g^{-1}x)v(x)\,dg\,dx\\
&=\int_\G \Theta_{\pi_{\varepsilon,i\lambda}}(g) \int_\X u(gx)v(x)\,dx\,dg
=\int_\G \Theta_{\pi_{\varepsilon,i\lambda}}(g) \psi(g)\,dg\,.
\end{align*}
 Let
\[
\A=\left\{
h_a=\begin{pmatrix}
a & 0\\
0 & a^{-1}
\end{pmatrix}, a>0\right\}
\]
and set
\[
\rho \left(
h_a\right)=a\,, \qquad 
D \left(
h_a\right)=a-a^{-1}\,.
\]
By \cite[VII, Theorem 4 and Corollary]{LangSL2R},
\begin{align}\label{C16}
\int_\G \Theta_{\pi_{\varepsilon,i\lambda}}(g) \psi(g)\,dg 
&= \frac{(-1)^\varepsilon}{2}\int_\A (\rho(h_a)^{i\lambda}+\rho(h_a)^{-i\lambda}) |D(h_a)|\int_{\G/\A} \psi(gh_ag^{-1})\,d\dot{g}\,dh_a\notag\\
&= (-1)^\varepsilon \int_\A \rho(h_a)^{i\lambda} |D(h_a)|\int_{\G/\A} \psi(gh_ag^{-1})\,d\dot{g}\,dh_a\,,
\end{align}
where $d\dot{g}$ is the invariant measure on $\G/\A$ such that $dg=d\dot{g} \, dh_a$ and where in the last equality we used the invariance of expression \eqref{C16} under the transformation $h_a\mapsto h_{a^{-1}}=h_a^{-1}$.
Since the Harish-Chandra orbital integral
\[
|D(h_a)|\int_{\G/\A} \psi(gh_ag^{-1})\,d\dot{g}
\]
is a smooth compactly supported function on $\A$, the claim follows. 

The evenness is an immediate consequence of $\eqref{even}$.
\end{proof}

Now we look for resonances.

\begin{lem}
\label{lem:shift}
Keep the notation of Lemma \ref{lem:PW} and let $L>0$ be a non-integer. 
\begin{enumerate}
\item
For every $z\in\C$ such that $\Im z>0$:
\begin{align}
\label{shift-tanh}
\int_\R &\frac{1}{\lambda^2-z^2} f_0(\lambda)\lambda\tanh(\frac{\pi\lambda}{2})\,d\lambda\notag\\
&=\int_{\R+iL} \frac{1}{\lambda+z}f_0(\lambda)\tanh(\frac{\pi\lambda}{2})\,d\lambda
+4i \sum_{\substack{k\in \ZZ \\ 0<2k+1<L}}\frac{1}{(2k+1)i+z} f_0((2k+1)i)\,.
\end{align}
\item 
For every $z\in\C$ such that $0<\Im z<1$:
\begin{align}
\label{shift-coth}
\int_\R &\frac{1}{\lambda^2-z^2} f_1(\lambda)\lambda \coth(\frac{\pi\lambda}{2})\,d\lambda\notag\\
&=\int_{\R+i} \frac{1}{\lambda-z}f_1(\lambda)\coth(\frac{\pi\lambda}{2})\,d\lambda
+\int_{\R+iL} \frac{1}{\lambda+z}f_1(\lambda)\coth(\frac{\pi\lambda}{2})\,d\lambda + F_L(z)\notag\\
&+\frac{2i}{z} \, f_1(0) 
+4i \sum_{\substack{k\in \ZZ\\0<2k<L}}\frac{1}{2ki+z}\, f_1(2ki)\,, \hskip 1cm\null
\end{align}
where $F_L$ is holomorphic for $-L<\Im z<1$.
\end{enumerate}
\end{lem}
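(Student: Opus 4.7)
The plan is to prove both identities by the same strategy: use the partial fraction decomposition $\frac{\lambda}{\lambda^2-z^2}=\frac{1}{2}\bigl(\frac{1}{\lambda-z}+\frac{1}{\lambda+z}\bigr)$, exploit parities to fold the two halves into one, and then shift the contour upward by Cauchy's theorem, picking up the contributions of the poles of the hyperbolic factor in the resulting strip. In both cases the contour shifts are justified by the Paley-Wiener decay of $f_\varepsilon$ from Lemma \ref{lem:PW}, combined with the uniform boundedness of $\tanh(\pi\lambda/2)$ and $\coth(\pi\lambda/2)$ along horizontal lines away from their isolated poles.

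For part (a), the substitution $\lambda\mapsto-\lambda$ together with the evenness of $f_0$ and the oddness of $\tanh(\pi\lambda/2)$ shows that the two halves of the partial fraction give equal integrals, so that the left-hand side equals $\int_\R \frac{f_0(\lambda)\tanh(\pi\lambda/2)}{\lambda+z}\,d\lambda$. The integrand is smooth at $\lambda=0$ (since $\tanh$ vanishes there), and the pole $\lambda=-z$ lies in the lower half-plane, so shifting the contour from $\R$ to $\R+iL$ only crosses the simple poles of $\tanh(\pi\lambda/2)$ at $\lambda=(2k+1)i$ with $0<2k+1<L$, each with residue $2/\pi$. Cauchy's theorem then produces the term $4i\sum_{0<2k+1<L}\frac{f_0((2k+1)i)}{(2k+1)i+z}$ of \eqref{shift-tanh}, and this completes part (a).

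Part (b) is the more delicate one because $\coth(\pi\lambda/2)$ has a simple pole at $\lambda=0$ of residue $2/\pi$. The same symmetry argument, interpreted in the principal-value sense, gives
\[
\int_\R \frac{\lambda\,f_1(\lambda)\coth(\pi\lambda/2)}{\lambda^2-z^2}\,d\lambda
=\mathrm{PV}\!\int_\R \frac{f_1(\lambda)\coth(\pi\lambda/2)}{\lambda+z}\,d\lambda.
\]
I would then write the PV as $\int_{\gamma^-}-i\pi\,\mathrm{Res}_{\lambda=0}$, where $\gamma^-$ is the real line with a small semicircular detour under $0$; the residue contribution equals $-\frac{2if_1(0)}{z}$. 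Deforming $\gamma^-$ upward to $\R+iL$ now crosses \emph{both} the pole at $\lambda=0$ (contributing $\frac{4if_1(0)}{z}$) and the poles $\lambda=2ki$ of $\coth(\pi\lambda/2)$ with $0<2k<L$ (contributing $4i\sum \frac{f_1(2ki)}{2ki+z}$), while the pole $\lambda=-z$ remains in the lower half-plane. After combining with the semicircle correction, the net contribution of $\lambda=0$ collapses to $\frac{2if_1(0)}{z}$, yielding the identity
\[
\int_\R \frac{\lambda\,f_1(\lambda)\coth(\pi\lambda/2)}{\lambda^2-z^2}\,d\lambda
=\int_{\R+iL}\frac{f_1(\lambda)\coth(\pi\lambda/2)}{\lambda+z}\,d\lambda
+\frac{2if_1(0)}{z}+4i\!\!\sum_{0<2k<L}\!\frac{f_1(2ki)}{2ki+z}.
\]
To recast this in the form \eqref{shift-coth}, I set $F_L(z):=-\int_{\R+i}\frac{f_1(\lambda)\coth(\pi\lambda/2)}{\lambda-z}\,d\lambda$, which is holomorphic on $\{\Im z<1\}$ because the pole $\lambda=z$ lies strictly below the contour $\R+i$, and hence holomorphic on the strip $-L<\Im z<1$; adding and subtracting $\int_{\R+i}\frac{f_1(\lambda)\coth(\pi\lambda/2)}{\lambda-z}\,d\lambda$ then gives \eqref{shift-coth}. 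The main obstacle is the bookkeeping around the pole of $\coth$ at $\lambda=0$ — reconciling the principal value, the $-i\pi$ semicircle contribution, and the full residue crossed by the upward shift — everything else being routine in view of Paley-Wiener decay.
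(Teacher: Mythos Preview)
Your argument for part (a) is correct and matches the paper's proof essentially word for word.

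For part (b), your proof is also correct but takes a genuinely different route from the paper. The paper avoids the principal-value analysis entirely: it first shifts the \emph{full} integrand $\frac{\lambda f_1(\lambda)\coth(\pi\lambda/2)}{\lambda^2-z^2}$ (which is regular at $\lambda=0$, since $\lambda\coth(\pi\lambda/2)$ is) from $\R$ to $\R+i$, crossing only the pole at $\lambda=z$ and producing the extra term $\pi i\, f_1(z)\coth(\pi z/2)$. Only then does it apply partial fractions on $\R+i$, keep the $\frac{1}{\lambda-z}$ piece on that line, and shift the $\frac{1}{\lambda+z}$ piece further up to $\R+iL$. The term $\pi i\, f_1(z)\coth(\pi z/2)$ is subsequently expanded via the partial-fraction (Mittag--Leffler) decomposition of $\coth$ in the strip $-L<\Im z<1$; the holomorphic remainder of that expansion is the paper's $F_L$.

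Your approach trades the Mittag--Leffler step for a principal-value/half-residue bookkeeping at $\lambda=0$, which is arguably more direct. On the other hand, your $F_L$ is introduced artificially to force the stated form --- it simply cancels the $\int_{\R+i}\frac{1}{\lambda-z}$ term --- whereas in the paper that integral and $F_L$ arise organically from the two-step contour shift and the $\coth$ expansion. Both arguments are valid; the paper's version explains why the $\int_{\R+i}$ integral is actually present in the statement rather than being an afterthought.
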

\begin{proof}
Observe that 
\begin{equation}
\label{fraction}
\frac{2\lambda}{\lambda^2-z^2}=\frac{1}{\lambda-z}+\frac{1}{\lambda+z}\,.
\end{equation}
Then
\[
\int_\R (\lambda^2-z^2)^{-1}f_0(\lambda) \lambda\tanh(\frac{\pi\lambda}{2})\,d\lambda
=\frac{1}{2} \int_\R \left(\frac{1}{\lambda-z}+\frac{1}{\lambda+z}\right)f_0(\lambda)\tanh(\frac{\pi\lambda}{2})\,d\lambda\,.
\]
Also, since the hyperbolic tangent is an odd function and $f_0(\lambda)=f_0(-\lambda)$ by Lemma \ref{lem:PW}, 
\[
\int_\R \frac{1}{\lambda-z}f_0(\lambda)\tanh(\frac{\pi\lambda}{2})\,d\lambda
=\int_\R \frac{1}{\lambda+z}f_0(\lambda)\tanh(\frac{\pi\lambda}{2})\,d\lambda
\,.
\]
Therefore
\[
\int_\R (\lambda^2-z^2)^{-1}f_0(\lambda) \lambda\tanh(\frac{\pi\lambda}{2})\,d\lambda
=\int_\R \frac{1}{\lambda+z}f_0(\lambda)\tanh(\frac{\pi\lambda}{2})\,d\lambda\,.
\]
Since $f_0$ is of Paley-Wiener type, shifting the domain of integration to $\R+iL$ and the residue theorem yield \eqref{shift-tanh} 
because
\[
\Res_{\lambda=(2k+1)i} \tanh(\frac{\pi\lambda}{2})=\frac{2}{\pi}\,.
\]

The shifting argument above must be modified for the integral involving the hyperbolic cotangent because it has a pole at $\lambda=0$. So, we first shift the contour of integration from $\R$ to $\R+i$. We do not cross any singularity of $\lambda \coth\left(\frac{\pi\lambda}{2}\right)$ but the integrand has a simple pole at $\lambda=z$, which satisfies $0<\Im z<1$. The residue theorem
gives
\begin{equation}
\label{coth-1}
\int_\R \frac{1}{\lambda^2-z^2} f_1(\lambda)\lambda\coth(\frac{\pi\lambda}{2})\,d\lambda
=\int_{\R+i} \frac{1}{\lambda^2-z^2} f_1(\lambda)\lambda\coth(\frac{\pi\lambda}{2})\,d\lambda
+\pi i f_1(z) \coth(z)\,.
\end{equation}
We now apply \eqref{fraction} to the first term in \eqref{coth-1} and obtain 
\begin{align}
\label{coth-2}
\int_{\R+i} &\frac{1}{\lambda^2-z^2} f_1(\lambda)\lambda\coth(\frac{\pi\lambda}{2})\,d\lambda\notag\\
&=\frac{1}{2}  \int_{\R+i} \frac{1}{\lambda-z} f_1(\lambda)\coth(\frac{\pi\lambda}{2})\,d\lambda
+ \frac{1}{2}  \int_{\R+i} \frac{1}{\lambda+z} f_1(\lambda)\coth(\frac{\pi\lambda}{2})\,d\lambda\,.
\end{align}
Notice that, on the right-hand side of \eqref{coth-2}, the first integral defines a holomorphic function for $\lambda \notin \R+i$, whereas the second defines a holomorphic function for $\lambda \notin \R-i$. We therefore shift the domain of integration of the second integral and apply the residue theorem again. Since
\[
\Res_{\lambda=2ki} \coth(\frac{\pi\lambda}{2})=\frac{2}{\pi}\,,
\]
this yields (forgetting for a moment the constant $\frac{1}{2}$):
\begin{align*}
\int_{\R+i}& \frac{1}{\lambda+z} f_1(\lambda)\coth(\frac{\pi\lambda}{2})\,d\lambda=\notag\\
&\int_{\R+iL} \frac{1}{\lambda+z} f_1(\lambda)\coth(\frac{\pi\lambda}{2})\,d\lambda
+4i \sum_{\substack{k\in \ZZ\\0<2k<L}}\frac{1}{2ki+z}\, f_1(2ki)\,.
\end{align*}
On the other hand, since $f_1$ is even,
\begin{align*}
i\pi f_1(z)\coth z&=i\pi \sum_{\substack{k\in \ZZ\\0\leq 2k<L}} \Res_{z=-2ki} [f_1(z)\coth z] \frac{1}{2ki+z} + F_L
\\&=
2i \sum_{\substack{k\in \ZZ\\0\leq 2k<L}} \frac{1}{2ki+z}\, f_1(2ki) +F_L\,,
\end{align*}
where $F_L$ is holomorphic for $-L<\Im z<1$. 
By substituting all these expressions in \eqref{coth-1} we then obtain \eqref{shift-coth}.
\end{proof}

\begin{thm}
\label{thm:resonancesCapelli-SL2R}
Considered as a $C^\infty_c(\X^{\max})^*$-valued linear operator on $C^\infty_c(\X^{\max})$, the resolvent $(\Ca^+-z^2)^{-1}$ extends from the upper half-plane $\C^+$ to a meromorphic function on $\C$, with simples poles (the resonances of $\Ca^+$) at $z=-in$ with $n\in \ZZ_{\geq 0}$. 

The residue operator at the resonance $z=-2ki$ is
\[
\omega_0(\Theta_{\pi_{1,2k}^c}): C^\infty_c(\X^{\max}) \ni u \longrightarrow u_{\pi_{1,2k}} 
\in C^\infty(\X^{\max})\subseteq  C^\infty_c(\X^{\max})^*, 
\]
where
\[
u_{\pi_{1,2k}}(x)=\int_\G \Theta_{\pi_{1,2k}^c}(g)u(g^{-1}x)\, dg \qquad (x\in \X^{\max})\,.
\]
The residue operator at the resonance $z=-(2k+1)i$ is
\[
\omega(\Theta_{\pi_{0,2k+1}^c}): C^\infty_c(\X^{\max}) \ni u \longrightarrow u_{\pi_{0,2k+1}} 
\in C^\infty(\X^{\max})\subseteq  C^\infty_c(\X^{\max})^*, 
\]
where
\[
u_{\pi_{0,2k+1}}(x)=\int_\G \Theta_{\pi_{0,2k+1}^c}(g)u(g^{-1}x)\, dg   \qquad (x\in \X^{\max})\,.
\]
\end{thm}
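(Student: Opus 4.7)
The plan is to combine the integral representation \eqref{resolvent} of the resolvent on $\C^+$ with the contour-shifting identities of Lemma \ref{lem:shift}. Fix test functions $u,v\in C_c^\infty(\X^{\max})$; by Lemma \ref{lem:PW}, the functions $f_0,f_1$ defined in \eqref{fpm} are even and of Paley--Wiener type, so their decay as $|\Re\lambda|\to\infty$ permits deforming the $\lambda$-contour arbitrarily deep into the upper half-plane without losing absolute convergence.

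Next I would apply \eqref{shift-tanh} to the spherical piece and \eqref{shift-coth} to the nonspherical piece of \eqref{resolvent}. For any non-integer $L>0$, the resulting contour integrals along $\R+iL$ (together with the auxiliary function $F_L$ in the nonspherical case) are holomorphic in $z$ throughout the half-plane $\Im z>-L$, because the Paley--Wiener bounds on $f_0,f_1$ dominate the polynomial growth of $\tanh(\pi\lambda/2)$ and $\coth(\pi\lambda/2)$ on the shifted contour, and because the denominators $\lambda\pm z$ never vanish on $\R+iL$ as long as $\Im z>-L$. Letting $L\to\infty$ through non-integer values extends $(\Ca^+-z^2)^{-1}u(v)$ meromorphically from $\C^+$ to all of $\C$. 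The only singularities so produced are the simple-pole terms visible on the right-hand sides of \eqref{shift-tanh}--\eqref{shift-coth}: the residues of $\tanh(\pi\lambda/2)$ at $\lambda=(2k+1)i$, $k\geq 0$, contribute simple poles at $z=-(2k+1)i$, while the residues of $\coth(\pi\lambda/2)$ at $\lambda=2ki$, $k\geq 0$ (with the $k=0$ pole appearing as the explicit $\frac{2i}{z}f_1(0)$), contribute simple poles at $z=-2ki$. These two families of poles together cover exactly $z=-in$, $n\in\ZZ_{\geq 0}$, and they are disjoint, so no cancellation is possible.

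To identify the residue operators, I read directly from Lemma \ref{lem:shift} that $\Res_{z=-(2k+1)i}(\Ca^+-z^2)^{-1}u(v)$ equals, up to a nonzero explicit constant coming from the Plancherel density and the residues of $\tanh$, the pairing $f_0((2k+1)i)=\int_\X u_{\pi_{0,-(2k+1)}}(x)v(x)\,dx$; similarly $\Res_{z=-2ki}(\Ca^+-z^2)^{-1}u(v)$ is a nonzero multiple of $f_1(2ki)=\int_\X u_{\pi_{1,-2k}}(x)v(x)\,dx$. Since contragredience on the principal series of $\Sp_2(\R)$ sends $\pi_{\varepsilon,\lambda}$ to $\pi_{\varepsilon,-\lambda}$, their characters and hence the distributions $u_\pi$ defined in \eqref{C1-omega0}--\eqref{upi-2} coincide. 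This rewrites the residue in terms of $u_{\pi_{0,2k+1}}$ and $u_{\pi_{1,2k}}$, yielding the residue operators exhibited in the theorem.

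The main obstacle I anticipate is the careful bookkeeping around Lemma \ref{lem:shift}(2): verifying that the auxiliary function $F_L$ in \eqref{shift-coth} really is holomorphic in the whole strip $-L<\Im z<1$ and does not hide extra poles, and that the initial contour shift from $\R$ to $\R+i$ (required because of the pole of $\coth(\pi\lambda/2)$ at $\lambda=0$) is compatible with the subsequent shift to $\R+iL$. Beyond this, the only remaining task is to propagate the multiplicative constants from the Plancherel densities $\frac{\lambda}{8\pi}\tanh(\pi\lambda/2)$ and $\frac{\lambda}{8\pi}\coth(\pi\lambda/2)$ through the residue computation; the qualitative content of the theorem is then immediate from the shift lemma.
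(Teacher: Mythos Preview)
Your proposal is correct and follows essentially the same route as the paper: rewrite the resolvent via \eqref{resolvent} in terms of $f_0,f_1$ from Lemma \ref{lem:PW}, apply the contour-shift identities of Lemma \ref{lem:shift}, and read off the poles and residues. The paper's proof is only a few lines and makes exactly these moves, ignoring the explicit multiplicative constants just as you suggest.

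One small point the paper handles more cleanly than your sketch: in \eqref{shift-coth} there is, in addition to the integral over $\R+iL$ and the function $F_L$, the integral $\int_{\R+i}\frac{1}{\lambda-z}f_1(\lambda)\coth(\pi\lambda/2)\,d\lambda$. This term is not holomorphic on $\Im z>-L$ as you group it, but it is holomorphic on $\Im z<1$, which already covers the entire region into which you are extending. The paper states this explicitly; your ``main obstacle'' paragraph gestures at the issue but frames it as a compatibility question between the two shifts rather than simply noting the domain of holomorphy of that term. Once this is observed, there is nothing further to check.
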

\begin{proof}
This is an immediate consequence of \eqref{resolvent}, since 
\begin{align*}
(\Ca^+-&z^2)^{-1}(u)(v)\\
&=\frac{1}{8\pi}\int_\R (\lambda^2-z^2)^{-1} f_0(\lambda)\lambda\tanh(\frac{\pi\lambda}{2})\,d\lambda
+\frac{1}{8\pi} \int_\R (\lambda^2-z^2)^{-1} f_1(\lambda) \lambda\coth(\frac{\pi\lambda}{2})\,d\lambda\,,
\end{align*}
where $f_\varepsilon(\lambda)$ are defined from the fixed $u,v\in C^\infty_c(\X^{\max})$ according to \eqref{fpm}.
Observe that integrals over $\R+iL$ in \eqref{shift-tanh} and \eqref{shift-coth} are holomorphic 
on $\Im z>-L$, whereas the integral over $\R+i$ in \eqref{shift-coth} is holomorphic 
on $\Im z<1$. Since $L>0$ is an arbitrary non-integer, the required meromorphic extension follows. Up to the constant $\frac{i}{2\pi}$ (or $\frac{i}{4\pi}$ when $n=0$), which does not play any special role and we will ignore, the residue operator $R_n$ at $z=-in$ 
maps $u$ into the distribution $R_nu$ such that $(R_nu)(v)$ is the residue of $(\Ca^+-z^2)^{-1}(u)(v)$ at $-in$, i.e. $f_0(2ki)$ if $n=2k$ and $f_1((2k+1)i)$ if $n=2k+1$.
\end{proof}

\subsection{The residue representations}

In this section we study the images of the residue operators, namely $\omega_0(\Theta_{\pi_{1,2k}})(C^\infty_c(\X^{\max}))$ and $\omega_0(\Theta_{\pi_{0,2k+1}})(C^\infty_c(\X^{\max}))$, where $k\in \ZZ_{\geq 0}$, as $\G$-spaces. 

We have observed in subsection \ref{subsection:resonances} that the images of the residue operators are spaces of distributions on $\X^{\max}$ and their elements are in fact in $C^\infty(\X^{\max})$. We first show that they are not only subspaces of $C^\infty_c(\X^{\max})^*$, but of $S(\X)^*$, the space of tempered distributions on $\X$.

\begin{lem}
\label{lem:g-gx}
Let us view $\R^p$ as a real Hilbert space with norm defined by the dot product. We identify the space of $m\times n$ matrices $\M_{m,n}(\R)$ with $\Hom(\R^n,\R^m)$, where the matrix $x$ sends a column vector $v\in\R^n$ to the column vector $xv\in \R^m$. Denote by $|x|$ the operator norm of $x$. Assume $m\leq n$ and let $\M_{m,n}^{max}(\R)\subseteq \M_{m,n}(\R)$ be the subset of matrices of maximal rank ($=m$). Then for any compact subset $\E\subseteq \M_{m,n}(\R)$ there is a constant $0<C<\infty$ such that
\[
|g|\leq C|gx| \qquad (g\in \M_{m,m}(\R)\,,\ x\in \E)\,.
\]
\end{lem}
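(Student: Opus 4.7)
The plan is to exploit the Moore--Penrose pseudoinverse to factor out the dependence on $x$. Note first that the stated inequality can only hold when $\E\subseteq \M_{m,n}^{\max}(\R)$ (otherwise some $x\in\E$ has nontrivial kernel in $\M_{m,m}(\R)\ni g\mapsto gx$), and this is the case of interest since the lemma will be applied with $x\in\X^{\max}$.

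Fix $x\in\M_{m,n}^{\max}(\R)$. Since $m\leq n$ and $x$ has rank $m$, the map $x:\R^n\to\R^m$ is surjective, so the symmetric matrix $xx^*\in\M_{m,m}(\R)$ is invertible (positive definite). Define the pseudoinverse $x^{+}=x^{*}(xx^{*})^{-1}\in\M_{n,m}(\R)$; it satisfies $xx^{+}=I_m$. Then for every $g\in\M_{m,m}(\R)$,
\[
g \;=\; g\,(xx^{+}) \;=\; (gx)\,x^{+},
\]
and submultiplicativity of the operator norm gives
\[
|g|\;\leq\;|gx|\,|x^{+}|.
\]

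It remains to control $|x^{+}|$ uniformly over a compact set. The map $x\mapsto xx^{*}$ is continuous $\M_{m,n}(\R)\to\M_{m,m}(\R)$, matrix inversion is continuous on the open set of invertible matrices, and $x\mapsto x^{*}$ is continuous; composing, $x\mapsto x^{+}=x^{*}(xx^{*})^{-1}$ is continuous on $\M_{m,n}^{\max}(\R)$. Hence $x\mapsto|x^{+}|$ is continuous on $\M_{m,n}^{\max}(\R)$, so it attains a finite maximum $C$ on the compact set $\E$. This $C$ is the desired uniform constant:
\[
|g|\;\leq\; C\,|gx|\qquad(g\in\M_{m,m}(\R),\ x\in\E).
\]

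There is essentially no obstacle here; the only point to watch is continuity of $x\mapsto x^{+}$, which would fail at rank-deficient $x$ but is harmless on $\M_{m,n}^{\max}(\R)$, where $xx^{*}$ stays invertible. An alternative route via singular values (the bound $C=\max_{x\in\E}\sigma_m(x)^{-1}$, where $\sigma_m(x)$ is the smallest singular value) leads to the same estimate, but working with $x^{+}$ makes the factorization $g=(gx)x^{+}$ completely transparent.
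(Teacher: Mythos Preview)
Your proof is correct. Both your argument and the paper's exploit the same idea: produce a right inverse of $x$ that varies continuously on $\M_{m,n}^{\max}(\R)$, then bound its operator norm over the compact set $\E$. The paper does this by choosing $k_x\in\Og_n$ with $xk_x=(y_x,0)$ and $y_x\in\GL_m(\R)$, and taking $C=\max_{x\in\E}|y_x^{-1}|$; you instead use the Moore--Penrose pseudoinverse $x^{+}=x^{*}(xx^{*})^{-1}$ and take $C=\max_{x\in\E}|x^{+}|$. These are literally the same constant, since $x^{+}=k_x\begin{pmatrix} y_x^{-1}\\ 0\end{pmatrix}$ and $k_x$ is orthogonal. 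Your route has the minor advantage that continuity of $x\mapsto x^{+}$ on $\M_{m,n}^{\max}(\R)$ is immediate from the formula, whereas the paper's claim that $x\mapsto y_x$ is continuous needs a word about how $k_x$ is chosen (though only continuity of $|y_x^{-1}|=\sigma_m(x)^{-1}$ is actually used). Your opening remark that the stated hypothesis must be read as $\E\subseteq\M_{m,n}^{\max}(\R)$ is also correct and matches how the paper applies the lemma.
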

\begin{proof}
For each $x\in \M_{m,n}^{max}(\R)$ there is $k_x\in \Og_n$ such that
\[
xk_x=(y_x,0)\,,
\]
where $y_x\in \GL_m(\R)$. Moreover the map $x\mapsto y_x$ is continuous. Hence 
\[
0<C=\max_{x\in \E}|y_x^{-1}|<\infty\,.
\]
Thus, for every $x\in \E$,
\[
|g|=|gy_xy_x^{-1}|\leq |gy_x| |y_x^{-1}|\leq |gy_x| C= C|gxk_x|=C|gx|\,.
\]
\end{proof}

The following proposition holds for every principal series representation. 

\begin{pro}
\label{pro:maps-tempered}
For every $\varepsilon\in \{0,1\}$ and $\lambda\in \C$, 
\begin{equation}
\label{is-tempered}
\omega(\Theta_{\pi_{\varepsilon,i\lambda}^c})(C^\infty_c(\X^{\max})) \subset \Ss(\X)^*.
\end{equation}
\end{pro}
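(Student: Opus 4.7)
Fix $u \in C_c^\infty(\X^{\max})$ with $K := \supp u$, a compact subset of $\X^{\max}$, and set $\pi = \pi_{\varepsilon, i\lambda}$. For $\phi \in \Ss(\X)$, the strategy is to define
\[
T_u(\phi) \;=\; \int_\G \Theta_{\pi^c}(g)\left(\int_\X u(y)\,\phi(gy)\, dy\right) dg,
\]
to check that the double integral converges absolutely and depends continuously on $\phi \in \Ss(\X)$, and then to verify that $T_u$ agrees with the distribution $u_\pi \in C^\infty(\X^{\max}) \subseteq C_c^\infty(\X^{\max})^*$ given by \eqref{upi-2} when tested against $v \in C_c^\infty(\X^{\max})$. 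Since the action $y \mapsto gy$ on $\X = M_{2,p}(\R)$ has Jacobian $|\det g|^p = 1$ for $g \in \SL_2(\R)$, the identification $T_u = u_\pi$ on $C_c^\infty(\X^{\max})$ will follow by Fubini once absolute convergence is established.

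The first key step is rapid decay of the auxiliary function $h(g) := \int_\X u(y)\phi(gy)\,dy$ on $\G$. Lemma \ref{lem:g-gx}, applied to the compact set $K \subset \X^{\max} \subseteq M_{2,p}^{\max}(\R)$, produces a constant $c > 0$ such that $\|gy\| \geq c\|g\|$ for every $g \in \G$ and every $y \in K$. Combined with the Schwartz estimate $|\phi(gy)| \leq C_N(1+\|gy\|)^{-N}\|\phi\|_N$, where $\|\phi\|_N$ is a continuous seminorm on $\Ss(\X)$, this yields, for every $N \geq 0$,
\[
|h(g)| \;\leq\; C_N' \,\|\phi\|_N\, (1+\|g\|)^{-N},
\]
with $C_N'$ depending only on $u$.

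The second step is a polynomial bound on the character. From the explicit formula for $\Theta_{\pi_{\varepsilon,i\lambda}^c}$ on the hyperbolic Cartan $\A$ given in \cite[VII, Thm.~4 and Cor.]{LangSL2R} (and its analogue on the compact Cartan), one sees that $\Theta_{\pi^c}$ is a locally integrable function of at most polynomial growth: there exist $C_\pi > 0$ and $M = M(\lambda) \geq 0$ such that $|\Theta_{\pi^c}(g)| \leq C_\pi (1+\|g\|)^M$. Note that the factor $|a^{i\lambda}| = \|a\|^{\Im \lambda}$ contributes polynomially in $\|g\|$ along $\A$, and the $|D(a)|^{-1}$ singularity at non-regular elements is controlled by local integrability. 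Choosing $N > M + \dim \G$, the bounds from the two steps combine to
\[
\int_\G |\Theta_{\pi^c}(g)||h(g)|\,dg \;\leq\; C\,\|\phi\|_N,
\]
so $T_u$ is a well-defined continuous linear form on $\Ss(\X)$, i.e.\ $T_u \in \Ss(\X)^*$. Finally, for $v \in C_c^\infty(\X^{\max})$, absolute convergence justifies Fubini in
\[
\int_\X u_\pi(x)\,v(x)\,dx \;=\; \int_\G \Theta_{\pi^c}(g)\!\int_\X u(g^{-1}x)\,v(x)\,dx\,dg \;=\; T_u(v),
\]
proving that $T_u$ extends $u_\pi$ from $C_c^\infty(\X^{\max})$ to a tempered distribution on $\X$.

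\textbf{Main obstacle.} The delicate point is establishing uniform polynomial control of $\Theta_{\pi^c}$ outside the tempered range, where $\Im\lambda$ is arbitrary; this requires extracting the correct dependence on $\|g\|$ from the explicit character formulas on each Cartan subgroup and verifying that the $|D|^{-1}$-type singularities do not obstruct absolute integrability against the rapidly decaying $h(g)$. The growth/decay match supplied by Lemma \ref{lem:g-gx} is what makes this work.
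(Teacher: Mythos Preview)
Your overall strategy matches the paper's: use Lemma~\ref{lem:g-gx} to get rapid decay of $h(g)=\int_\X u(y)\phi(gy)\,dy$ in $\|g\|$, and then check that $\int_\G |\Theta_{\pi^c}(g)|\,|h(g)|\,dg<\infty$ with the bound depending continuously on a Schwartz seminorm of $\phi$. The problem is your second step. The pointwise estimate
\[
|\Theta_{\pi^c}(g)| \leq C_\pi\,(1+\|g\|)^M
\]
is false: the principal series character on the hyperbolic Cartan has the form $(\pm)(a^{i\lambda}+a^{-i\lambda})/|a-a^{-1}|$, and the factor $|D(h_a)|^{-1}=|a-a^{-1}|^{-1}$ blows up as $a\to 1$. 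You acknowledge the singularity in the same sentence (``controlled by local integrability''), but that sentence then contradicts the displayed bound, and the subsequent step ``choose $N>M+\dim\G$'' relies on exactly that bound. Local integrability alone does not let you multiply by an arbitrary bounded function and integrate over all of $\G$; you need to control the contribution near the singular set uniformly in conjugation.

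The paper closes this gap by passing through the Weyl integration formula rather than attempting a pointwise character estimate. Writing $dg=|D(h_a)|^2\,d\dot g\,dh_a$ over the hyperbolic Cartan (the principal series character vanishes on the elliptic set, so there is no compact Cartan contribution), one factor of $|D(h_a)|$ cancels the singularity in the character, leaving an integrand of size $(|\rho(h_a)^{i\lambda}|+|\rho(h_a)^{-i\lambda}|)\,|D(h_a)|$ on $\A$. The orbital integral $\int_{\G/\A}(1+|g^{-1}h_ag|)^{-N}\,d\dot g$ is then rewritten via $\G/\A\simeq\N$ and estimated directly using the Hilbert--Schmidt norm of $h_a n_r$, producing a bound $C_N\,(1+a^2+a^{-2})^{-N/4}$ on $\A$. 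This is what forces finiteness for $N$ large; your argument needs the same mechanism (or an equivalent one) in place of the incorrect global polynomial bound on $\Theta_{\pi^c}$.
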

\begin{proof}
Lemma \ref{lem:g-gx} implies that for every fixed $u\in C_c^\infty(\X^{max})$ and $N>0$ there is a seminorm $q_{N,u}$ on the space $\Ss(\X)$ such that
\[
\int_\X |u(x)v(gx)|\, dx \leq q_{N,u}(v) (1+|g|)^{-N} \qquad (g\in \G\,,\ v\in \Ss(\X))\,.
\]
Recall the notation \eqref{C16} and let
\[
\N=\left\{n_r=\left(
\begin{array}{ccc}
1 & r\\
0 & 1
\end{array}\right), r\in\R\right\}\,.
\]
Let $|\cdot|_{\rm HS}$ denote the Hilbert--Schmidt norm. Then
\[
|h_a n_r|_{\rm HS}^2=
\left|\left(
\begin{array}{ccc}
a & ar\\
0 & a^{-1}
\end{array}\right)\right|_{\rm HS}^2
=a^2+a^{-2}+(ar)^2 \qquad (h_a\in \A\,,\ n_r\in \N)\,.
\]
Hence there is a constant $C_N$ such that
\begin{align*}
\rho(h_a)\int_\N(1+|h_a n_r|)^{-N}\,dn_r&\leq C_N\rho(h_a)\int_\N(1+|h_a n_r|_{\rm HS}^2)^{-N/2}\,dn_r\\
&\leq C_N\rho(h_a)\int_\R(1+a^2+a^{-2})^{-N/4}(1+(ar)^2)^{-N/4}\,dr\\
&=\left(C_N\int_\R(1+r^2)^{-N/4}\,dr\right) (1+a^2+a^{-2})^{-N/4}\,.
\end{align*}
For any fixed $\lambda\in\C$,
\begin{align*}
\int_\G|&\Theta_{\pi_{\varepsilon,i\lambda}^c}(g)|(1+|g|)^{-N}\,dg\\
& =\int_\A|\Theta_{\pi_{\varepsilon,i\lambda}^c}(h_a)| |D(h_a)|^2\int_{\G/\A}(1+|g^{-1}h_a g|)^{-N}\,d\overset . g\,dh_a\\
& \leq  \int_\A(|\rho(h_a)^{i\lambda}|+|\rho(h_a)^{-i\lambda}|) |D(h_a)|\int_{\G/\A}(1+|g^{-1}h_a g|)^{-N}\,d\overset . g\,dh_a\\
& =\int_\A(|\rho(h_a)^{i\lambda}|+|\rho(h_a)^{-i\lambda}|) \rho(h_a)\int_\N(1+|h_a n_r|)^{-N}\,dn_r\,dh_a\\
&\leq \left(C_N\int_\R(1+r^2)^{-N/4}\,dr\right) \int_0^\infty 2(1+e^{2t}+e^{-2t})^{-N/4}\sinh2t\,dt\,,
\end{align*}
where $e^t=a$ and we used integration formula \cite[VII, INT2]{LangSL2R}. The expression above is finite for $N>0$ large enough. Hence there is a seminorm $q_u$, which depends on 
$\Theta_{\pi_{\varepsilon,i\lambda}^c}$, on the space $\Ss(\X)$ such that
\[
\int_\G\int_\X|\Theta_{\pi_{\varepsilon,i\lambda}}^c(g) u(g^{-1}x)v(x)|\,dx\,dg\leq q_u(v)
 \qquad (u\in C_c^\infty(\X^{max}), v\in \Ss(\X))\,.
\]
This verifies \eqref{is-tempered}. 
\end{proof}

For $m\in \ZZ$, and for fixed $\lambda\in\C$, let $\varphi_m$ be the function on $\G$ defined in terms of the Iwasawa decomposition $\G=\K\A\N$ by 
\[
\varphi_m\left( k_\theta h_a n_r
\right)
=\rho(h_a)^{-(\lambda+1)} e^{im\theta}\,,
\]
where 
$k_\theta=\begin{pmatrix}
\cos\theta & \sin\theta\\
-\sin\theta & \cos\theta
\end{pmatrix}
 \in \K=\SO_2(\R)$. 
Then the $(\mathfrak{g},\K)$-module of $\pi_{\varepsilon,\lambda}$ is 
\[
\V_{\varepsilon,\lambda,\K}=\bigoplus_{\substack{m\in\ZZ \\m\equiv \varepsilon}} \C \varphi_m\,,
\]
where $m\equiv \varepsilon$ means $m-\varepsilon \in 2\ZZ$.

We now focus on the case $\pi_{\varepsilon,n}$, where $\varepsilon\in \{0,1\}$, $n\in \ZZ_{\geq 0}$ and $n \not\equiv \varepsilon$, as in Theorem \ref{thm:resonancesCapelli-SL2R}.
These representations are all reducible:
\begin{enumerate}
\item
$\pi_{1,0}=\Dsf^0_+\oplus \Dsf^0_-$ decomposes into the holomorphic and anti-holomorphic limits of discrete series representations. The $(\g,\K)$-modules of $\Dsf^0_-$ and $\Dsf^0_+$
are respectively
\[
\bigoplus_{\substack{m<0 \\m\equiv 1}} \C \varphi_m
\qquad \text{and} \qquad
\bigoplus_{\substack{m>0 \\m\equiv 1}} \C \varphi_m\,.
\]
\item
For all $(\varepsilon,n)$ where $\varepsilon\in \{0,1\}$, $n\in \ZZ_{>0}$ and 
$n \not\equiv \varepsilon$, the $(\g,\K)$-module $\V_{\varepsilon,n,\K}$ contains two irreducible submodules:
\[
\V_{\K}^{-n}=\bigoplus_{\substack{m< -n \\ m\equiv \varepsilon}} \C \varphi_m
\qquad\text{and}\qquad
\V_{\K}^{n}=\bigoplus_{\substack{m> n \\ m\equiv \varepsilon}} \C \varphi_m\,.
\]
$\V_{\K}^{-n}$ and $\V_{\K}^{n}$ are isomorphic to the $(\g,\K)$-modules of the discrete series representations $\Dsf^{-n}$ and $\Dsf^{n}$, respectively. 
The quotient module 
\[
\V_n=\V_{\varepsilon,n,\K}/ (\V_{\K}^{-n}+\V_{\K}^{n}) = 
\bigoplus_{\substack{-n\leq m\leq n \\m\equiv \varepsilon}} \C \varphi_m
\]
is finite dimensional, of dimension $n$, and isomorphic to its contragredient. 
\end{enumerate}

The above composition series show that 
\begin{align*}
&\Theta_{\pi_{1,0}}=\Theta_{\Dsf^0_-}+\Theta_{\Dsf^0_+}\\
&\Theta_{\pi_{\varepsilon,n}}=\Theta_{\Dsf^{-n}}+\Theta_{\V_n}+\Theta_{\Dsf^{n}}
\qquad \text{$(\varepsilon\in \{0,1\}$, $n\in \ZZ_{>0}$ and 
$n \not\equiv \varepsilon$)}\,.
\end{align*}

Hence, in the notation of Theorem \ref{thm:resonancesCapelli-SL2R}, 
\begin{align}
\label{Theta-forms1}
&\omega_0(\Theta_{\pi_{0,1}^c})=\omega_0(\Theta_{(\Dsf^0_-)^c})+\omega_0(\Theta_{(\Dsf^0_+)^c})\\
\label{Theta-forms2}
&\omega_0(\Theta_{\pi_{\varepsilon,n}^c})=\omega_0(\Theta_{(\Dsf^{-n})^c})+\omega_0(\Theta_{\V_n^c})+\omega_0(\Theta_{(\Dsf^{n})^c}) \notag \\
&\null\hskip 6cm\qquad \text{$(\varepsilon\in \{0,1\}$, $n\in \ZZ_{>0}$ and 
$n \not\equiv \varepsilon$)}\,.\quad\null
\end{align}

Proposition \ref{pro:maps-tempered} extends to each of the above subquotients 
of $\pi_{\varepsilon,n}$.

\begin{pro}
\label{prop:is-tempered}
Let $\pi \in \{\Dsf^0_-, \Dsf^0_+, \Dsf^{-n}, \V_n, \Dsf^{n}\}$ be a subquotient of $\pi_{\varepsilon,n}$, where $(\varepsilon,n)\in \{0,1\}\times \ZZ_{\geq 0}$ and $n \not\equiv \varepsilon$. Then
\[
\omega_0(\Theta_{\pi^c})(C^\infty_c(\X^{\max})) \subset \Ss(\X)^*.
\]
\end{pro}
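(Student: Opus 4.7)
The plan is to adapt the argument used in Proposition \ref{pro:maps-tempered} to each subquotient $\pi\in\{\Dsf^0_-,\Dsf^0_+,\Dsf^{-n},\V_n,\Dsf^{n}\}$ of $\pi_{\varepsilon,n}$. The earlier proof rested on combining two estimates: the bound
\[
\int_\X |u(x)v(gx)|\,dx \leq q_{N,u}(v)(1+|g|)^{-N} \qquad (u\in C_c^\infty(\X^{\max}),\ v\in \mathcal{S}(\X)),
\]
which comes from Lemma \ref{lem:g-gx} and does not depend on $\pi$, together with the integrability
\begin{equation}
\int_\G |\Theta_{\pi^c}(g)|(1+|g|)^{-N}\,dg < \infty \label{eq:plan-int}
\end{equation}
for $N$ large enough. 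I would therefore focus on verifying \eqref{eq:plan-int} for each of the five subquotients, using the Weyl integration formula exactly as in \eqref{C16}.

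For the discrete series $\Dsf^{\pm n}$ ($n\in\ZZ_{>0}$) and their holomorphic/anti-holomorphic limits $\Dsf^0_\pm$, all of which are tempered, Harish-Chandra's explicit character formulas give a bound of the form $|\Theta_{\pi^c}(h_a)|\leq C |D(h_a)|^{-1}$ on the split Cartan $\A$, with the factor multiplying $|D(h_a)|^{-1}$ actually decaying as $|a|\to\infty$. Substituting this bound in place of $|\rho(h_a)^{i\lambda}|+|\rho(h_a)^{-i\lambda}|$ in the chain of inequalities of Proposition \ref{pro:maps-tempered} yields \eqref{eq:plan-int} for these four representations essentially verbatim.

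For the finite-dimensional constituent $\V_n$, the cleanest path is to use the character identity
\[
\Theta_{\V_n^c} = \Theta_{\pi_{\varepsilon,n}^c} - \Theta_{(\Dsf^{-n})^c} - \Theta_{(\Dsf^n)^c}
\]
from \eqref{Theta-forms2}, which gives the pointwise bound $|\Theta_{\V_n^c}|\leq |\Theta_{\pi_{\varepsilon,n}^c}|+|\Theta_{(\Dsf^{-n})^c}|+|\Theta_{(\Dsf^n)^c}|$ on the set of regular semisimple elements where all characters are represented by locally integrable functions. The first term is integrable against $(1+|g|)^{-N}$ by Proposition \ref{pro:maps-tempered} applied with $i\lambda=n$, since inspection of that proof shows the computation is valid for arbitrary $\lambda\in\C$; the remaining two terms are handled by the previous step. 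An equivalent route would invoke the Weyl character formula $\Theta_{\V_n}(h_a)=\pm(a^n-a^{-n})/(a-a^{-1})$ and observe that polynomial growth in $|a|$ on $\A$ is absorbed by the computation of Proposition \ref{pro:maps-tempered} for $N$ large enough.

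I do not foresee a real obstacle. The only point needing a brief check is that the Proposition \ref{pro:maps-tempered} argument indeed goes through for non-unitary parameters $\lambda\in\C$; this is purely a matter of re-reading that proof, where $\lambda$ never appears in a way that requires reality. Everything else reduces to standard estimates for characters of tempered and finite-dimensional representations of $\SL_2(\R)$.
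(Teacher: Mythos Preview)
Your proposal is correct, but it takes a longer route than the paper's own proof, and there is one small omission worth flagging.

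For the tempered subquotients $\Dsf^0_\pm$ and $\Dsf^{\pm n}$, the paper does not redo any character estimate: it simply invokes Corollary~\ref{omega0-1993_thm3.1}, which already guarantees that $\omega_0(\Theta_{\pi^c})$ is defined on all of $\mathcal{S}(\X)$, a stronger statement than the one being proved. Your hands-on verification of \eqref{eq:plan-int} works too, but notice that the chain of inequalities in Proposition~\ref{pro:maps-tempered} reduces the Weyl integration formula to an integral over the split Cartan $\A$ only because principal series characters vanish on elliptic elements; discrete series characters do not, so a separate (easy) bound for the compact-Cartan contribution is needed if you insist on this route.

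For the finite-dimensional piece $\V_n$, the paper follows what you list as your ``equivalent route'': it cites the explicit formula for $\Theta_{\V_n}$ on $\A$ (\cite[VII, Lemma~2]{LangSL2R}) and observes that the argument of Proposition~\ref{pro:maps-tempered} goes through verbatim, since the polynomial growth of $(a^n-a^{-n})/(a-a^{-1})$ is absorbed by the factor $(1+a^2+a^{-2})^{-N/4}$ for large $N$. Your primary path via the character identity and the triangle inequality is a legitimate alternative that reduces the question to cases already handled, at the cost of an extra step.
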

\begin{proof}
Since $\Dsf^0_-, \Dsf^0_+, \Dsf^{-n}$ and $\Dsf^{n}$ are tempered unitary representations, the property holds for them (even with $\Ss(\X)$ instead of $C^\infty_c(\X^{\max})$) by Corollary 
\ref{omega0-1993_thm3.1}. We only have to consider the case where $\pi$ is the finite dimensional 
subquotient acting on $\V_n$. Because of the formula for the restriction to $\A$ of $\Theta_{\V_n^c}$, see e.g. \cite[VII, Lemma 2]{LangSL2R}, exactly the same proof as Proposition \ref{pro:maps-tempered} applies in this case as well.
\end{proof}

Let $\proj_m$ denote the projection of  $(\omega_0|_\G,C^\infty_c(\X^{\max}))$ onto its isotypic component of type $\chi_m$, where $\chi_m(k_\theta)=e^{im\theta}$, i.e. 
\[
\proj_m f(x)=\int_\K \chi_m(k) f(k^{-1}x) \, dk \qquad (f\in C^\infty_c(\X^{\max}), x\in \X^{\max})\,.
\]
In other words, $\proj_m=\omega_0(\chi_m\, dk)$.
Denote by $\proj^\G_m$ the projection of principal series representation $\pi_{\varepsilon,n}$ onto its 
its isotypic component of type $\chi_m$, i.e. 
\[
\proj^\G_m \varphi(g)=\int_\K \chi_m(k) \varphi(k^{-1}g) \, dk \qquad (\varphi \in\V_{\varepsilon,n,\K},\, g\in \G)\,,
\]
i.e. $\proj^\G_m =\pi_{\varepsilon,n}(\chi_m\, dk)$.

Moreover, for every $n\in \ZZ_{\geq 0}$, let $\varepsilon\in \{0,1\}$ such that $\varepsilon \not\equiv n$. Set 
\begin{equation}
\label{projections}
\proj_{n,<}=\bigoplus_{\substack{m< -n \\m\equiv \varepsilon}} \proj_m\,,
\qquad
\proj_{n,{\rm fin}}=\bigoplus_{\substack{-n\leq m\leq n \\m\equiv \varepsilon}} \proj_m\,,
\qquad
\proj_{n,>}=\bigoplus_{\substack{m> n \\m\equiv \varepsilon}} \proj_m\,.
\end{equation}
By replacing $\proj_m$ with $\proj_m^\G$, we similarly define the projections $\proj_{n,<}^\G$, $\proj_{n,{\rm fin}}^\G$, and $\proj_{n,>}^\G$.

Let $u\in C^\infty_c(\X^{\max})$. Recall from Lemma \ref{lemma:compact-support} that 
$u_x:\G\to \C$, defined for $g\in \G$ by $u_x(g)=u(g^{-1}x)$, is in $C_c^\infty(\G)$. For $f:\G\to \C$, we set
$f^\vee(g)=f(g^{-1})$ for all $g\in \G$. To simplify notation, we will write $u_x^\vee$ instead
of $(u_x)^\vee$.
The following lemma links $\proj_n$ and $\proj^\G_n$ for such functions. Similar relations extend 
to their sums in \eqref{projections}.

\begin{lem}
\label{lem:proj}
Let $u\in C^\infty_c(\X^{\max})$. Then 
\[\proj_m^\G \left(u_x^\vee\right)=(\proj_m u)_x^\vee \qquad (x\in \X^{\max})\,.\]
\end{lem}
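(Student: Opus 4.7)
The proof is a direct calculation that only requires unwinding the three definitions: the definition of $u_x$ (and hence of $u_x^\vee$), the definition of $\proj_m$, and the definition of $\proj_m^\G$. The plan is to evaluate both sides of the asserted equality at an arbitrary $g \in \G$ and show that both reduce to the same integral $\int_\K \chi_m(k) u(k^{-1}gx)\, dk$.

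For the left-hand side, I would start from
\[
\proj_m^\G(u_x^\vee)(g) = \int_\K \chi_m(k)\, u_x^\vee(k^{-1}g)\, dk,
\]
then use $u_x^\vee(h) = u_x(h^{-1}) = u(hx)$ with $h = k^{-1}g$ to obtain
\[
\proj_m^\G(u_x^\vee)(g) = \int_\K \chi_m(k)\, u(k^{-1}g x)\, dk.
\]
For the right-hand side, I would compute
\[
(\proj_m u)_x^\vee(g) = (\proj_m u)_x(g^{-1}) = (\proj_m u)(g x) = \int_\K \chi_m(k)\, u(k^{-1}gx)\, dk,
\]
using first the definitions of $(\cdot)^\vee$ and $(\cdot)_x$, and then the definition of $\proj_m$. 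Comparing the two expressions yields the lemma.

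There is no real obstacle here; the only subtle point is keeping track of the order of inversions when composing $(\cdot)^\vee$ with $(\cdot)_x$. It is perhaps worth noting for the reader that the two sides make sense: since $u\in C_c^\infty(\X^{\max})$, the function $u_x$ is in $C_c^\infty(\G)$ by Lemma \ref{lemma:compact-support}, so $\proj_m^\G(u_x^\vee)$ is well-defined; and $\proj_m u \in C^\infty_c(\X^{\max})$, so $(\proj_m u)_x^\vee$ is well-defined as an element of $C_c^\infty(\G)$. The claim for the sum-projections in \eqref{projections} then follows immediately by linearity and the absolute convergence of the relevant (finite or infinite) sums when applied to a fixed $K$-finite element, which is automatic for compactly supported smooth functions on $\X^{\max}$.
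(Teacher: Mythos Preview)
Your proof is correct and is essentially identical to the paper's own argument: both unwind the definitions to reduce each side, evaluated at $g\in\G$, to the integral $\int_\K \chi_m(k)\,u(k^{-1}gx)\,dk$. Your additional remarks on well-definedness and on the sum-projections are fine but go slightly beyond what the paper records.
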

\begin{proof}
Since $u_x^\vee(k^{-1}g)=u_x(g^{-1}k)=u(k^{-1}gx)$, we have for every $g\in \G$
\[
\proj_m^\G\left(u_x^\vee\right)(g)=\int_\K \chi_m(k) u_x^\vee(k^{-1}g) \, dk
=\int_\K \chi_m(k) u(k^{-1}gx) \, dk=(\proj_m u)(gx)=(\proj_m u)_x^\vee(g)\,.
\]
\end{proof}

The following fact is well-known.
\begin{lem}
\label{lem:matrix-coefficient1}
For any subquotient $\pi$ of the principal series representation $\pi_{\varepsilon,n}$ of $\G$, we have
\begin{equation}
\label{matrix-coefficient1}
\int_\G (\pi(g)\varphi_m,\varphi_m)_{\L^2(\K)} f(g) \, dg=\int_\G  (\pi(g)\varphi_m,\varphi_m)_{\L^2(\K)} (\proj_m^\G f)(g) \, dg\,.
\end{equation}
\end{lem}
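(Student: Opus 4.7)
The plan is to exploit the fact that $\varphi_m$ is a $\K$-weight vector for the action of $\pi_{\varepsilon,n}$: then $F(g) := (\pi(g)\varphi_m,\varphi_m)_{\L^2(\K)}$ is left $\K$-equivariant under a character, and one can transfer the projection $\proj_m^\G$ from $f$ onto $F$ via Fubini, reducing everything to the orthogonality of characters on $\K$.

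First, from the explicit formula $\varphi_m(k_\theta h_a n_r) = \rho(h_a)^{-(\lambda+1)} e^{im\theta}$ and the standard left-translation action of $\G$ on the compact picture of $\pi_{\varepsilon,n}$, a direct calculation gives $\pi_{\varepsilon,n}(k_\phi)\varphi_m = e^{-im\phi}\varphi_m = \chi_{-m}(k_\phi)\varphi_m$, so $\varphi_m$ is a $\K$-eigenvector with character $\chi_{-m}$; this is inherited by any subquotient $\pi$. Since $\pi(k)$ acts unitarily on $\L^2(\K)$ for $k\in \K$ (left translation preserves the Haar measure of $\K$), I will then deduce
\[
F(kg) = (\pi(g)\varphi_m,\pi(k^{-1})\varphi_m)_{\L^2(\K)} = \chi_{-m}(k) F(g) \qquad (k\in\K,\, g\in\G),
\]
so that $F$ is left $\K$-isotypic of type $\chi_{-m}$.

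Next, by Fubini (legitimate since $f$ is compactly supported and $F$ is continuous) and the change of variables $g \mapsto kg$ based on the left-invariance of the Haar measure on $\G$,
\begin{align*}
\int_\G F(g)(\proj_m^\G f)(g)\, dg
&= \int_\K \chi_m(k)\int_\G F(g)f(k^{-1}g)\, dg\, dk\\
&= \int_\K \chi_m(k)\int_\G F(kg)f(g)\, dg\, dk\\
&= \Bigl(\int_\K \chi_m(k)\chi_{-m}(k)\, dk\Bigr)\int_\G F(g)f(g)\, dg.
\end{align*}
Since $\chi_m\chi_{-m} \equiv 1$ on $\K$ and $\K$ carries normalized Haar measure, the bracketed constant equals $1$, giving the claimed identity.

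The only slightly delicate step is pinning down the correct sign in the $\K$-character of $\varphi_m$ (whether $\pi(k)$ acts by $\chi_m$ or by $\chi_{-m}$); this is a bookkeeping issue with the convention for the principal series action and does not affect the structure of the argument. Neither the restriction to subquotients nor the possible non-unitarity of $\pi$ introduces any new difficulty, because the $\K$-weight property of $\varphi_m$ passes to every subquotient and $f$ is compactly supported, so all integrals converge absolutely.
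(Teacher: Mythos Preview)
Your proof is correct and follows essentially the same idea as the paper's: average over $\K$ and use that $\varphi_m$ is a $\K$-weight vector so that the matrix coefficient $F(g)=(\pi(g)\varphi_m,\varphi_m)$ transforms by a character under $\K$-translation. The only cosmetic difference is that the paper substitutes $g\mapsto gk$ and uses the first-slot equivariance $\pi(k)\varphi_m=\chi_m(k^{-1})\varphi_m$, whereas you substitute $g\mapsto kg$ and use unitarity of $\pi|_\K$ in the second slot; your choice matches the left-translation definition of $\proj_m^\G$ more directly.
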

\begin{proof}
Replacing $g$ by $gk$ and integrating over $\K$, the left-hand side of the equality becomes 
\begin{align*}
\int_\K \int_\G (\pi(gk)\varphi_m,\varphi_m)_{\L^2(\K)} &f(gk) \, dg\, dk= 
\int_\K \int_\G (\pi(g)\pi(k)\varphi_m,\varphi_m)_{\L^2(\K)} f(gk) \, dg\, dk\\
&= 
\int_\G (\pi(g)\varphi_m,\varphi_m)_{\L^2(\K)} \left(\int_\K \chi_m(k^{-1}) f(gk) dk\right) dg\,,
\end{align*}
which is the right-hand side of \eqref{matrix-coefficient1}.
\end{proof}

\begin{lem}
\label{lem:proj1}
Let $\pi \in \{\Dsf^0_-, \Dsf^0_+, \Dsf^{-n}, \V_n, \Dsf^{n}\}$ be a subquotient of $\pi_{\varepsilon,n}$, where $(\varepsilon,n)\in \{0,1\}\times \ZZ_{\geq 0}$, and let $\proj$ denote the projection of the $(\mathfrak{g},\K)$-module $\V_{\varepsilon,n,\K}$ of $\pi_{\varepsilon,n}$ onto 
the $(\mathfrak{g},\K)$-module of $\pi$. 
Then
\[
(u,v)\longrightarrow (\omega_0(\Theta_{\pi^c})u,v)=\int_\X u_\pi(x)\overline{v(x)} \, dx=\int_{\X^{\max}} u_\pi(x)\overline{v(x)} \, dx\,,
\]
where $u_\pi$ is as in \eqref{upi-2}, 
is a hermitian bilinear form on $C^\infty_c(\X^{\max})$. Moreover, 
\[
(\omega_0(\Theta_{\pi^c})u,v)=
(\omega_0(\Theta_{\pi^c})\proj u,\proj v) \qquad (u,v\in C^\infty_c(\X^{\max}))\,.
\]
\end{lem}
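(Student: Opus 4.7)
The plan breaks into three steps: (i) convergence of the defining integral and the equality with $\int_\X u_\pi \overline v$; (ii) Hermiticity; (iii) the projection identity.

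For (i), fix $u,v \in C_c^\infty(\X^{\max})$. Lemma \ref{lemma:compact-support} shows that for each $x \in \X^{\max}$ the map $g \mapsto u(g^{-1}x)$ is in $C_c^\infty(\G)$; the same argument based on closedness of $\G$-orbits in $\X^{\max}$ shows that the matrix coefficient $\psi(g) := (\omega_0(g)u,v) = \int_\X u(g^{-1}x)\overline{v(x)}\,dx$ belongs to $C_c^\infty(\G)$. Because $\Theta_{\pi^c}$ is locally integrable, the integral $\int_\G \Theta_{\pi^c}(g)\psi(g)\,dg$ converges absolutely; Fubini then yields $(\omega_0(\Theta_{\pi^c})u,v) = \int_\X u_\pi(x)\overline{v(x)}\,dx$, with $u_\pi$ as in \eqref{upi-2}. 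Since the integrand vanishes off $\X^{\max}$, the $\X^{\max}$ and $\X$ integrals agree.

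For (ii), substituting $g \mapsto g^{-1}$ in $\overline{(\omega_0(\Theta_{\pi^c})v,u)}$ and using the unitarity $(\omega_0(g)v,u) = \overline{(\omega_0(g^{-1})u,v)}$ together with the general identity $\Theta_{\pi^c}(g^{-1}) = \Theta_\pi(g)$ reduces Hermiticity to $\Theta_{\pi^c} = \overline{\Theta_\pi}$ on the regular set. For the unitary subquotients $\Dsf^0_\pm$ and $\Dsf^{\pm n}$, this is immediate from $\pi^c \cong \overline{\pi}$. For $V_n$, one has $V_n^c \cong V_n$, and the restriction of $\Theta_{V_n}$ to the split Cartan equals $\sum_{-n\le m\le n,\,m\equiv \varepsilon} a^m$, a sum of symmetric pairs $a^m + a^{-m}$, hence real valued; together with $V_n^c \cong V_n$ this gives $\Theta_{V_n^c} = \Theta_{V_n} = \overline{\Theta_{V_n}}$.

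For (iii), the key structural fact is that $\Theta_{\pi^c}$ is a class function on $\G$. The change of variable $g \mapsto kgk^{-1}$ inside the defining integral shows that $\omega_0(\Theta_{\pi^c})$ commutes with $\omega_0(k)$ for every $k \in \K$, hence preserves every $\K$-isotypic component. Since the pairing $\int_\X f_1\overline{f_2}\,dx$ is $\K$-invariant (Lebesgue measure on $\X$ is $\K$-stable), distinct $\K$-types remain orthogonal under this integral even outside $L^2(\X)$, so $(\omega_0(\Theta_{\pi^c})u,v)$ reduces to a sum over matching pairs of $\K$-types from $u$ and $v$. It then remains to identify which $\K$-types survive: for the tempered subquotients this follows from Corollary \ref{omega0-1993_thm3.1}, which places the image of $\omega_0(\Theta_{\pi^c})$ inside the $\pi$-isotypic part of $L^2(\X)$, whose $\K$-spectrum is exactly $\pi|_\K$. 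For the finite-dimensional quotient $V_n$, I would instead expand $\Theta_{V_n^c}$ as the finite trace $\sum_\alpha (V_n^c(\cdot)\varphi_\alpha,\varphi_\alpha^*)$ over a $\K$-adapted basis and apply Lemma \ref{lem:matrix-coefficient1} termwise: each matrix coefficient transforms under left-$\K$ translation by the $\K$-character of $\varphi_\alpha$, so $\omega_0$ of each term outputs a function of a single $\K$-type belonging to that of $V_n$. In either case $\omega_0(\Theta_{\pi^c})u = \omega_0(\Theta_{\pi^c})\proj u$, and the symmetric step on $v$ yields $(\omega_0(\Theta_{\pi^c})u,v) = (\omega_0(\Theta_{\pi^c})\proj u,\proj v)$.

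The main obstacle I anticipate is treating the non-tempered, non-unitary quotient $V_n$ on the same footing as the tempered subquotients, since the Howe-duality / direct-integral picture behind Corollary \ref{omega0-1993_thm3.1} does not cover it. This is circumvented by the finite-dimensionality of $V_n$: its character is a genuine finite trace, and the reduction to matching $\K$-types can be carried out termwise via Lemma \ref{lem:matrix-coefficient1} with no convergence issues.
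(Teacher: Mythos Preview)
Your argument is essentially correct, but the route you take for part (iii) differs from the paper's. The paper treats all subquotients $\pi$ uniformly by expanding $\Theta_{\pi}$ as the sum $\sum_m (\pi(g)\varphi_m,\varphi_m)_{\L^2(\K)}$ over the $\K$-types of $\pi$, then applying Lemma~\ref{lem:matrix-coefficient1} termwise to insert $\proj_m^\G$, and finally invoking Lemma~\ref{lem:proj} to convert $\proj_m^\G(u_x^\vee)$ into $(\proj_m u)_x^\vee$; hermiticity then handles the $v$-slot. You instead bifurcate: for the tempered subquotients you appeal to Corollary~\ref{omega0-1993_thm3.1} to constrain the $\K$-spectrum, and only for $\V_n$ do you carry out the explicit matrix-coefficient expansion. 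Your approach is more conceptual for the tempered pieces and avoids justifying the interchange of the infinite $\K$-type sum with the integral over $\G$; the paper's approach is more concrete and uniform, and makes the role of Lemmas~\ref{lem:proj} and~\ref{lem:matrix-coefficient1} explicit in all cases.

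One point of imprecision: Corollary~\ref{omega0-1993_thm3.1} does not literally place $\omega_0(\Theta_{\pi^c})u$ inside a $\pi$-isotypic subspace of $\L^2(\X)$ (for the limits of discrete series there is no such discrete subspace). What it gives is that the quotient $\mathcal{S}(\X)/\mathcal{R}$ is, as a $\G$-module, a multiple of $\pi$; hence any $\proj_m u$ with $m$ not a $\K$-type of $\pi$ lies in the radical $\mathcal{R}$, so $(\omega_0(\Theta_{\pi^c})\proj_m u, w)=0$ for all $w$. Since $(\proj_m u)_\pi\in C^\infty(\X^{\max})$ and this vanishes against every $w\in C_c^\infty(\X^{\max})$, one concludes $(\proj_m u)_\pi=0$. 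Phrased this way your argument is complete.
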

\begin{proof}
The fact that $(\omega_0(\Theta_{\pi^c})u,v)$ is a bilinear hermitian form on $C^\infty_c(\X^{\max})$ -- and even on $\Sg(\X)$ -- when $\pi$ is $\Dsf^0_-, \Dsf^0_+, \Dsf^{-n}$, or $\Dsf^{n}$,  is part of Corollary \ref{omega0-1993_thm3.1}. For $\V_n$, this is a consequence of \cite[VII, \S 4, Lemmas 2 and 3]{LangSL2R}, which shows that $\Theta_{\V_n^c}$ is real valued. 

To prove the last statement, let us suppose for definiteness that $\pi=\Dsf^{n}$, 
so that $\proj=\proj_{n,>}$.
Notice that, for $u\in C^\infty_c(\X^{\max})$, we have $u(gx)=u_x(g^{-1})=u_x^\vee(g)$. 
Hence for every $v\in C^\infty_c(\X^{\max})$, by \eqref{C1-omega0} and \eqref{upi-2} and 
Lemmas \ref{lem:matrix-coefficient1} and \ref{lem:proj}, we obtain:
\begin{align*}
(\omega_0(\Theta_{(\Dsf^{n})^c})u,v)&=\int_{\X^{\max}}\int_\G \Theta_{(\Dsf^{n})^c}(g)u(g^{-1}x)\overline{v(x)} \, dg\, dx\\
&=\int_{\X^{\max}}\int_\G \Theta_{\Dsf^{n}}(g^{-1})u(g^{-1}x)\overline{v(x)} \, dg\, dx\\
&=\int_{\X^{\max}}\int_\G \Theta_{\Dsf^{n}}(g)u_x^\vee(g)\overline{v(x)} \, dg\, dx\\
&=\sum_{\substack{m> n \\m\equiv \varepsilon}} \int_{\X^{\max}}\int_\G 
(\Dsf^{n}(g)\varphi_m,\varphi_m)_{\L^2(\K)} u_x^\vee(g)\overline{v(x)} \, dg\, dx\,\\
&=\sum_{\substack{m> n \\m\equiv \varepsilon}}  \int_{\X^{\max}}\int_\G 
(\Dsf^{n}(g)\varphi_m,\varphi_m)_{\L^2(\K)} \proj_m^\G u_x^\vee(g)\overline{v(x)} \, dg\, dx\,\\
&= \int_{\X^{\max}}\int_\G 
(\Dsf^{n}(g)\varphi_m,\varphi_m)_{\L^2(\K)} \proj_{n,>}^\G u_x^\vee(g)\overline{v(x)} \, dg\, dx\,\\
&= \int_{\X^{\max}}\int_\G 
(\Dsf^{n}(g)\varphi_m,\varphi_m)_{\L^2(\K)} (\proj_{n,>} u)_x^\vee(g)\overline{v(x)} \, dg\, dx\,,
\end{align*}
which gives
$
(\omega_0(\Theta_{(\Dsf^{n})^c})u,v)=(\omega_0(\Theta_{(\Dsf^{n})^c}) \proj_{n,>} u,v)
$
via the same computations in reverse order.
The result now follows since the form is hermitian.
\end{proof}

Let us define hermitians forms on $C^\infty_c(\X^{\max})$ by 
\begin{align*}
(u,v)_{1,0}&=(\omega_0(\Theta_{\pi_{1,0}^c})u,v)\,,\\
(u,v)_{1,0,<}&=(\omega_0(\Theta_{(\Dsf^0_-)^c})u,v)\,,\\
(u,v)_{1,0,>}&=(\omega_0(\Theta_{(\Dsf^0_+)^c})u,v)\,,
\end{align*}
and, for $(\varepsilon,n)\in \{0,1\} \times \ZZ_{>0}$ with $n \not\equiv \varepsilon$,
\begin{align*}
(u,v)_{\varepsilon,n}&=(\omega_0(\Theta_{\pi_{\varepsilon,n}^c})u,v)\,,\\
(u,v)_{\varepsilon,n,<}&=(\omega_0(\Theta_{(\Dsf^{-n})^c})u,v)\,,\\
(u,v)_{\varepsilon,n,{\rm fin}}&=(\omega_0(\Theta_{\V_n^c})u,v)\,,\\
(u,v)_{\varepsilon,n,>}&=(\omega_0(\Theta_{(\Dsf^{n})^c})u,v)\,.
\end{align*}
Hence, by \eqref{Theta-forms1} and \eqref{Theta-forms2}, for all $u,v\in C^\infty_c(\X^{\max})$,
\begin{align}
\label{forms1}
(u,v)_{1,0}&=(u,v)_{1,0,<}+(u,v)_{1,0,>}\,,\\
\label{forms2}
(u,v)_{\varepsilon,n}&=(u,v)_{\varepsilon,n,<}+(u,v)_{\varepsilon,n,{\rm fin}}+(u,v)_{\varepsilon,n,>}\,.
\end{align}
By Lemma \ref{lem:proj1}, these forms agree with their restrictions to the corresponding projections 
$\proj C^\infty_c(\X^{\max})$. 
For every $(\varepsilon,n)\in \{0,1\} \times \ZZ_{\geq 0}$ with $n \not\equiv \varepsilon$, set
\[
C^\infty_{n,<}=\proj_{n,<} \left(C^\infty_c(\X^{\max})\right)\,, \qquad 
C^\infty_{n,{\rm fin}}=\proj_{n,{\rm fin}} \left(C^\infty_c(\X^{\max})\right)\,, \qquad 
C^\infty_{n,>}=\proj_{n,>} \left(C^\infty_c(\X^{\max})\right)\,.
\]

Let 
\[
\rad_{1,0}\,, \quad \rad_{1,0,<}\,, \quad  \rad_{1,0,>}\,,\quad \rad_{\varepsilon,n}\,,\quad \rad_{\varepsilon,n,<}\,, \quad \rad_{\varepsilon,n,{\rm fin}}\,, \quad \rad_{\varepsilon,n,>}\,.
\]
respectively denote the radicals of the forms in \eqref{forms1} and \eqref{forms2} as forms on $C^\infty_c(\X^{\max})$.

We will treat in the following the cases corresponding to $n\in \ZZ_{>0}$, the case for $n=0$ being similar (and easier). 

\begin{lem}
Let $(\varepsilon,n)\in \{0,1\} \times \ZZ_{>0}$ with $n \not\equiv \varepsilon$. 
Then, corresponding to the direct sum decomposition with respect to the action of $\K$
\begin{equation}
\label{Ktypes1}
C^\infty_c(\X^{\max})=C^\infty_{n,<}\oplus 
C^\infty_{n,{\rm fin}} \oplus
C^\infty_{n,>}\,,
\end{equation}
we have 
\begin{equation}
\label{Ktypes2}
C^\infty_c(\X^{\max})/\rad_{\varepsilon,n}= 
C^\infty_{n,<}/\left({\rad_{\varepsilon,n,<}|_{C^\infty_{n,<}}}\right)\oplus 
C^\infty_{n,{\rm fin}}/\left({\rad_{\varepsilon,n,{\rm fin}}|_{C^\infty_{n,{\rm fin}}}}\right) \oplus
C^\infty_{n,>}/\left({\rad_{\varepsilon,n,>}|_{C^\infty_{n,>}}}\right)\,,
\end{equation}
where 
on the right-hand side we take the restriction of the considered forms to the ranges of the
corresponding projections and
\begin{align*}
&C^\infty_{n,<}/\left({\rad_{\varepsilon,n,<}|_{C^\infty_{n,<}}}\right)=C^\infty_c(\X^{\max})/\rad_{\varepsilon,n,<}\\
&C^\infty_{n,{\rm fin}}/\left({\rad_{\varepsilon,n,{\rm fin}}|_{C^\infty_{n,{\rm fin}}}}\right)=
C^\infty_c(\X^{\max})/\rad_{\varepsilon,n,{\rm fin}}\\
&C^\infty_{n,>}/\left({\rad_{\varepsilon,n,>}|_{C^\infty_{n,>}}}\right)=C^\infty_c(\X^{\max})/\rad_{\varepsilon,n,>}
\end{align*}
\end{lem}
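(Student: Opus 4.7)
The plan is to reduce everything to the orthogonality encoded in Lemma \ref{lem:proj1}. First I would use that lemma to observe
\[
(u,v)_{\varepsilon,n,*} = (\proj_{n,*}u,\, \proj_{n,*}v)_{\varepsilon,n,*}
\qquad (u,v\in C_c^\infty(\X^{\max}),\ *\in\{<,{\rm fin},>\}),
\]
so that $(\cdot,\cdot)_{\varepsilon,n,*}$ vanishes whenever one of its arguments lies in $\ker\proj_{n,*}$. Hence the $\K$-type decomposition \eqref{Ktypes1} is simultaneously orthogonal with respect to each of the three forms $(\cdot,\cdot)_{\varepsilon,n,<}$, $(\cdot,\cdot)_{\varepsilon,n,{\rm fin}}$, and $(\cdot,\cdot)_{\varepsilon,n,>}$, and the form $(\cdot,\cdot)_{\varepsilon,n,*}$ is effectively supported on the single summand $C^\infty_{n,*}$.

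Second, I would exploit this to describe the individual radicals. Writing $R_* = \rad_{\varepsilon,n,*}|_{C^\infty_{n,*}}$ for the radical of the restriction of $(\cdot,\cdot)_{\varepsilon,n,*}$ to $C^\infty_{n,*}\times C^\infty_{n,*}$, the previous step gives
\[
\rad_{\varepsilon,n,*} \;=\; R_* \,\oplus\, \ker\proj_{n,*},
\]
because $u\in\rad_{\varepsilon,n,*}$ iff $\proj_{n,*}u\in R_*$. Quotienting removes $\ker\proj_{n,*}$ and yields
\[
C_c^\infty(\X^{\max})/\rad_{\varepsilon,n,*} \;=\; C^\infty_{n,*}/R_*,
\]
which is the last assertion of the lemma.

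Third, I would handle the total form $(\cdot,\cdot)_{\varepsilon,n}$ via \eqref{forms2}. Splitting $u = u_< + u_{\rm fin} + u_>$ and $v = v_< + v_{\rm fin} + v_>$ according to \eqref{Ktypes1}, the first step collapses all cross-terms and produces
\[
(u,v)_{\varepsilon,n} \;=\; (u_<,v_<)_{\varepsilon,n,<} \,+\, (u_{\rm fin},v_{\rm fin})_{\varepsilon,n,{\rm fin}} \,+\, (u_>,v_>)_{\varepsilon,n,>}.
\]
Consequently $u\in\rad_{\varepsilon,n}$ iff each $u_*\in R_*$, so $\rad_{\varepsilon,n} = R_<\oplus R_{\rm fin}\oplus R_>$, and quotienting \eqref{Ktypes1} termwise by this radical produces the decomposition \eqref{Ktypes2}.

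I do not expect a serious technical obstacle: the content is formal manipulation of radicals of bilinear forms once Lemma \ref{lem:proj1} is in hand. The only delicate point worth attention is to verify that the notation $\rad_{\varepsilon,n,*}|_{C^\infty_{n,*}}$ admits the two equivalent readings $\rad_{\varepsilon,n,*}\cap C^\infty_{n,*}$ and ``the radical of the form restricted to $C^\infty_{n,*}\times C^\infty_{n,*}$''; these agree precisely by the orthogonality established in the first step, so the notation is unambiguous.
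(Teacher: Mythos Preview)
Your proposal is correct and follows essentially the same approach as the paper: both arguments rest entirely on Lemma~\ref{lem:proj1} to see that each form $(\cdot,\cdot)_{\varepsilon,n,*}$ is supported on the single $\K$-type block $C^\infty_{n,*}$, whence $\rad_{\varepsilon,n,*}=R_*\oplus\ker\proj_{n,*}$ and $\rad_{\varepsilon,n}=R_<\oplus R_{\rm fin}\oplus R_>$. The paper packages the last step as $\rad_{\varepsilon,n}=\rad_{\varepsilon,n,<}\cap\rad_{\varepsilon,n,{\rm fin}}\cap\rad_{\varepsilon,n,>}$ and then intersects the three descriptions, while you block-diagonalize $(\cdot,\cdot)_{\varepsilon,n}$ directly; these are two phrasings of the same computation.
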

\begin{proof}
The $\K$-type decomposition of $C^\infty_c(\X^{\max})$ in \eqref{Ktypes1} implies that 
\[
\rad_{\varepsilon,n}=\rad_{\varepsilon,n,<}\cap \rad_{\varepsilon,n,{\rm fin}} \cap 
\rad_{\varepsilon,n,>}\,.
\]
Since
\[
\rad_{\varepsilon,n,<}=
\left(\rad_{\varepsilon,n,<}|_{C^\infty_{n,<}}\right)\oplus 
C^\infty_{n,{\rm fin}}\oplus C^\infty_{n,>}
\]
and similarly for the other two, we obtain that 
\[
\rad_{\varepsilon,n}= 
\left(\rad_{\varepsilon,n,<}|_{C^\infty_{n,<}}\right)\oplus
\left(\rad_{\varepsilon,n,{\rm fin}}|_{C^\infty_{n,{\rm fin}}}\right)\oplus
\left(\rad_{\varepsilon,n,>}|_{C^\infty_{n,>}}\right)\,,
\]
from which \eqref{Ktypes2} follows.
\end{proof} 
 
To the first and the last quotient, which correspond to $\Dsf^{-n}$ and $\Dsf^{n}$, 
we can apply Theorem \ref{1993_thm3.1}. Hence the range of 
$\omega_0(\Theta_{(\Dsf^{\pm n })^c})$ is a $\G\cdot\G'$-module of the form 
\[
\Dsf^{\pm n}\otimes (\Dsf^{\pm n})'
\]
where $(\Dsf^{\pm n })'$ is a irreducible unitary (usually not tempered) $\G'$-module.

Theorem \ref{1993_thm3.1} does not apply to $\omega_0(\Theta_{\V_n^c})$ because the growth of the character of $\V_n$ does not allow to extend it to $\mathcal{S}(\X)$. Nevertheless, Proposition 
\ref{prop:is-tempered} ensures that $\omega_0(\Theta_{\V_n^c})$ is a $\G'$-module under 
$\omega_0$. Hence
\[
\omega_0(\Theta_{\V_n^c})=\V_n\otimes(\V_n)'
\]
where $(\V_n)'$ is an admissible, quasi-simple representation of $\G'$. For $n\geq 1$, understanding its structure would require work parallel to \cite{HT93} and we defer it to a future article. If $n=1$, then $\V_1$ is the trivial representation and  $(\V_1)'$ is irreducible and unitary.   
\medskip

We summarize these results as follows.
The Capelli operator $\mathcal{C}^+$ is an unbounded self-adjoint operator on $\L^2(\X)$. Its spectrum is the union of a continuous and a discrete part. We consider subspace $\L^2(\X)_{\rm cont} \subseteq \L^2(\X)$ on which the Capelli operator has a continuous spectrum. 
The operator $\mathcal{C}^+$ commutes with the action of $\Sp_2(\R)$ on $\L^2(\X)$ via
$\omega_0$. 
We consider the direct integral decomposition of $\L^2(\X)_{\rm cont}$ as $\Sp_2(\R)$-module under $\omega_0$. 
Each isotypic component is a multiple of a unitary principal spherical representation $\pi_{\varepsilon,i\lambda}$ of $\Sp_2(\R)$, where $\lambda\in \R$. 
The Capelli operator acts on each of them as scalar multiplication. 
As a bounded operator on $\L^2(\X)_{\rm cont}$, the resolvent $(\mathcal{C}^+-z^2)^{-1}$ is defined for $z$ in the upper half-plane $\C^+$. Its restriction  
$(\mathcal{C}^+-z^2)^{-1}|_{C^\infty_c(\X^{\max})}$ extends as a meromorphic operator valued function of $z \in \C$ with simple poles at $z=-in$, where $n\in \ZZ_{\geq 0}$. 
The residue space at  each pole $z=-in$ is 
\begin{equation}
\label{residuespace-SL}
\left\{\Res_{z=-in} (\mathcal{C}^+-z^2)^{-1}f ; f\in C^\infty_c(\X^{\max})\right\}\,.
\end{equation}
This space is contained in $\mathcal{S}(\X)^*$ and therefore a $\G\G'$-module via $\omega_0$.
As a $\G$-module, the residue space \eqref{residuespace-SL} equals 
$\omega_0(\Theta_{\pi_{\varepsilon,n}^c})(C^\infty_c(\X^{\max}))$, where $\varepsilon\in \{0,1\}$ and $\varepsilon \not\equiv n$. The structure of the residue representations as $\G\G'$-module via $\omega_0$ is then collected in the following theorem.

\begin{thm}
As $\G\G'$-module, $\omega_0(\Theta_{\pi_{\varepsilon,n}^c})(C^\infty_c(\X^{\max}))$ decomposes as follows:
\begin{itemize}
\item If $n=0$, then 
\[
\omega_0(\Theta_{\pi_{1,0}^c})(C^\infty_c(\X^{\max}))= \left(\Dsf^0_-\otimes (\Dsf^0_-)' \right)\oplus \left(\Dsf^0_+\otimes (\Dsf^0_+)'\right)\,,
\]
where $(\Dsf^0_\pm)'$ is the irreducible unitary representation of $\G'$ corresponding to 
$\Dsf^0_\pm$ in Howe's correspondence.
\item If $n\in \ZZ_{>0}$, then
\[
\omega_0(\Theta_{\pi_{\varepsilon,n}^c})(C^\infty_c(\X^{\max}))=
\left(\Dsf^{-n}\otimes (\Dsf^{-n})' \right)\oplus 
\left(\V_n\otimes(\V_n)'\right) \oplus
\left(\Dsf^{n}\otimes (\Dsf^{n})'\right)\,,
\]
where $(\Dsf^{\pm n})'$ is the irreducible unitary representation of $\G'$ corresponding to 
$\Dsf^{\pm n }$ in Howe's correspondence, and $(\V_n)'$ is an admissible quasi-simple representation of $\G'$. If $n=1$, then $\V_1$ is the trivial representation and $(\V_1)'$ is an irreducible unitary representation of $\G'$.
\end{itemize}
\end{thm}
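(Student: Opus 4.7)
My plan is to combine the decomposition results just established with Howe's duality correspondence, treating the theorem as a direct consequence of the preceding machinery. The key input is the already-obtained splitting \eqref{Ktypes2} of the quotient $C^\infty_c(\X^{\max})/\rad_{\varepsilon,n}$ into three summands (two summands when $n=0$) corresponding to the composition factors $\Dsf^{-n}$, $\V_n$, $\Dsf^{n}$ of $\pi_{\varepsilon,n}$ (resp.\ $\Dsf^0_\pm$ of $\pi_{1,0}$). Because each of the radicals $\rad_{\varepsilon,n,<}$, $\rad_{\varepsilon,n,{\rm fin}}$, $\rad_{\varepsilon,n,>}$ is the radical of a form that is invariant under both $\G$ and $\G'$ (the form being a matrix coefficient of $\omega_0$ against a $\G$-invariant distribution, and $\omega_0$ commuting with the $\G'$-action on $\X$), each summand in \eqref{Ktypes2} inherits the structure of a $\G\G'$-submodule of the residue representation, which by Proposition \ref{prop:is-tempered} lives inside $\mathcal{S}(\X)^*$.

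For the tempered outer summands, I would apply Corollary \ref{omega0-1993_thm3.1} directly to the irreducible tempered unitary representations $\pi=\Dsf^{\pm n}$ (resp.\ $\Dsf^0_\pm$). The corollary produces an irreducible unitary $\G\G'$-module completing $C^\infty_c(\X^{\max})/\rad_{\varepsilon,n,\pm}$ with respect to the induced form, and that module is infinitesimally equivalent to $\pi\otimes \pi'$ where $\pi'$ is the Howe correspondent of $\pi$. Taking $(\Dsf^{\pm n})'$ (resp.\ $(\Dsf^0_\pm)'$) to be these correspondents, we obtain the outer two (resp.\ both) summands of the claimed decomposition.

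The main obstacle is the middle finite-dimensional summand attached to $\V_n$: Corollary \ref{omega0-1993_thm3.1} does not apply because $\V_n$ is not unitary (let alone tempered). I would argue as follows. Proposition \ref{prop:is-tempered} ensures that $\omega_0(\Theta_{\V_n^c})$ maps $C^\infty_c(\X^{\max})$ into $\mathcal{S}(\X)^*$, so the quotient $C^\infty_c(\X^{\max})/\rad_{\varepsilon,n,{\rm fin}}$ sits inside tempered distributions with well-defined commuting $\G$ and $\G'$ actions. The $\K$-type decomposition \eqref{Ktypes1}--\eqref{Ktypes2} together with Lemma \ref{lem:proj1} shows that, as a $\G$-module, this quotient is isotypic of type $\V_n$. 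Hence it has the form $\V_n\otimes (\V_n)'$ where $(\V_n)'$ is the multiplicity space, carrying a natural $\G'$-action. Admissibility and quasi-simplicity of $(\V_n)'$ follow from standard duality arguments: finite $\K$-multiplicities in the residue space propagate to finite $\K'$-multiplicities via the action of $\omega_0$ in the stable range, and the $\mathcal{Z}(\mathfrak{g}')$-action factors through $\omega_0(\mathcal{U}(\g')^{\G'})=\omega_0(\mathcal{U}(\g)^{\G})$, which acts by scalars on the $\V_n$-isotypic component by Schur's lemma applied to $\V_n$. A finer structural analysis (including irreducibility for $n>1$) would require work parallel to \cite{HT93} and is deferred.

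For the special case $n=1$, $\V_1$ is the one-dimensional trivial representation, so the $\V_1$-isotypic quotient reduces to the space of $\G$-invariants and Howe's duality applied to this trivial type yields a single irreducible unitary $\G'$-module $(\V_1)'$; this is precisely the content of Corollary \ref{omega0-1993_thm3.1} applied to the trivial character, once one verifies that the relevant form is positive semidefinite, which follows from the fact that $\Theta_{\V_1^c}\equiv 1$ so that the form reduces to $\int_\G(\omega_0(g)u,v)\, dg$.
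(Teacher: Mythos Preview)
Your approach is essentially the same as the paper's: the theorem is a summary of the preceding discussion, and you correctly identify the three ingredients --- the $\K$-type splitting \eqref{Ktypes2}, the application of Corollary~\ref{omega0-1993_thm3.1} (equivalently Theorem~\ref{1993_thm3.1}) to the tempered pieces $\Dsf^{\pm n}$ and $\Dsf^0_\pm$, and the separate treatment of the finite-dimensional subquotient $\V_n$ via Proposition~\ref{prop:is-tempered}. Your justification of quasi-simplicity of $(\V_n)'$ through the Capelli identity $\omega_0(\mathcal{U}(\g')^{\G'})=\omega_0(\mathcal{U}(\g)^\G)$ is a nice addition that the paper leaves implicit.

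There is one genuine slip in your handling of the case $n=1$. You claim that irreducibility and unitarity of $(\V_1)'$ follow from Corollary~\ref{omega0-1993_thm3.1} applied to the trivial representation, but that corollary requires $\pi$ to be \emph{tempered}, and the trivial representation of $\Sp_2(\R)$ is not tempered. The form $\int_\G(\omega_0(g)u,v)\,dg$ does converge for $u,v\in C_c^\infty(\X^{\max})$ by Lemma~\ref{lemma:compact-support}, but the conclusion of the corollary (positive semidefiniteness on $\mathcal{S}(\X)$, irreducibility of the completion, Howe correspondence) needs the growth bound $\gamma<\gamma_{\max}$ from \cite[Proposition~4.11]{PrzebindaUnitary}. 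The paper itself simply asserts the $n=1$ conclusion without proof; the point is that in the stable range $p\ge 2$ one has $\gamma_{\max}=\frac{2p-3}{2}\ge\frac{1}{2}$, which is enough to accommodate the trivial representation's rate of growth, so the more general form of \cite[Theorem~3.1]{PrzebindaUnitary} (not just its tempered specialization) applies. You should invoke that rather than Corollary~\ref{omega0-1993_thm3.1}.
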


\appendix

\section{The Weil representation}
\label{appendix:Weil representation}

In this appendix we recall the definition of the Weil representation. We follow the approach initiated in \cite{AubertPrzebinda_omega}.

Let $\Wv$ be a finite dimensional real vector space equipped with a non-degenerate symplectic form $\langle{\cdot},{\cdot}\rangle$ and let ${\Sp}(\Wv)$ denote the corresponding symplectic group, with symplectic Lie algebra $\sp(\Wv)$. The metaplectic group 
is the double cover of ${\Sp}(\Wv)$ given by 
\[
\wt{\Sp}(\Wv)=\left\{\wt{g}=(g,\xi)\in \Sp(\Wv) \times \C; \xi^2=\Theta^2(g)\right\}
\]
with group multiplication 
\begin{equation}
\label{multiplication-wtSp}
(g_1,\xi_1)(g_2,\xi_2)=(g_1g_2,\xi_1\xi_2 C(g_1,g_2))\,,
\end{equation}
The $2$-cocyle $C(g_1,g_2)$ appearing in \eqref{multiplication-wtSp} is explicit and can be found in \cite[Proposition 4.13]{AubertPrzebinda_omega}, whereas $\Theta^2$ is defined by 
\[
\Theta^2(g)=\gamma(1)^{2\dim\,(g-1)\Wv-2}\,(\det (g-1:\Wv/\Ker(g-1)\to (g-1)\Wv))^{-1}  \qquad (g\in \Sp(\Wv))\,,
\]
where for every $A\in \GL_n(\R)$ (with $n\geq 1$)
\[
\gamma(\det A)=\frac{e^{\frac{\pi i}{4} \sign(\det A)}}{\sqrt{|\det(A)|} }\,;
\]
see \cite[Definition 4.16 and Remark 4.5]{AubertPrzebinda_omega}.
In particular, $\gamma(1)=e^{\frac{\pi i}{4}}$.

A positive definite compatible complex structure on $(\Wv,\inner{\cdot}{\cdot})$ is an element $J\in \Sp(\Wv)$ such that $J^2=-1$ and the symmetric bilinear form defined on $\Wv$ by $B(w,w')=\inner{Jw}{w'}$ is positive definite. Fix such a $J$. 
For any subspace $\Uv$ of $\Wv$ we normalize the Haar measure $\mu_\Uv$ on $\Uv$ so that the volume of the unit cube with respect to $B$ is equal to $1$. 

Let us fix the unitary character $\chi$ of $\R$ defined by $\chi(r)=e^{2\pi i r}$ and a polarization $\Wv=\X\oplus \Y$. The Weil representation of $\wt{\Sp}(\Wv)$ attached to $\chi$ is defined as the composition of three operators, $\Op$, $\mathcal K$ and $T$, which we now recall. 
$\Op$ is the isomorphism of linear topological vector spaces 
$\Op: \Ss^*(\X\times \X)\to \Hom(\Ss(\X),\Ss^*(\X))$ defined 
by 
\[
\Op(K)v(x)=\int_\X K(x,x')v(x')\,d\mu_\X(x') \qquad (K\in \Ss^*(\X\times \X)\,, v\in \Ss(\X))\,.
\]
The operator
$\mathcal K:\Ss^*(\Wv)\to \Ss^*(\X\times \X)$ is the Weyl transform:
it is the topological isomorphism of linear vector spaces defined for 
$f\in \Ss(\Wv)$ by 
\[
\mathcal K(f)(x,x')=\int_\Yv f(x-x'+y)\chi\left(\frac{1}{2}\langle y, x+x'\rangle\right)\,d\mu_\Y(y).
\]
The operator $T$ embeds $\wt{\Sp}(\Wv)$ into $\Ss^*(\Wv)$ as suitably normalized Gaussian measures. 

An imaginary Gaussian on $(g-1)\W$ is defined by
\[
\chi_{c(g)}(u)=\chi\left(\frac{1}{4}\langle(g+1)(g-1)^{-1}u,u\rangle\right) \qquad (u=(g-1)w,\ w\in\Wv).
\]
(Notice that if $g-1$ is invertible, then $c(g)=(g+1)(g-1)^{-1}$ is the Cayley transform of $g$.)
For $\wt{g}=(g,\xi)\in\wt{\Sp}(\Wv)$ we set $\Theta(\wt{g})=\xi$ and 
define
\[
T(\wt{g})=\Theta(\wt{g})\chi_{c(g)}\mu_{(g-1)\Wv}\,.
\]
Then the Weil representation $(\omega, \L^2(\Xv))$ attached to the character $\chi$ is 
\begin{equation}
\label{omega}
\omega=\Op \circ \mathcal{K} \circ T\,.
\end{equation}
See \cite[Theorem 4.27]{AubertPrzebinda_omega}.
It is a unitary representation of $\wt{\Sp}(\Wv)$ with space of smooth 
vectors equal to $\mathcal{S}(\X)$.

Despite \eqref{omega} defines $\omega(\wt{g})$ for all $\wt{g}\in \wt{\Sp}(\Wv)$, it is not easy to make its right-hand side explicit for arbitrary $\wt{g}\in \wt{\Sp}(\Wv)$. 
As we are going to see, such explicit formulas can be given on certain subgroups of $\wt{\Sp}(\Wv)$. 

The function 
\begin{equation}
\label{character-chi-plus}
\chi_+(\wt{g})=\frac{\Theta(\wt{g})}{|\Theta(\wt{g})|}
\end{equation}
is well defined on the whole metaplectic group $\wt{\Sp}(\Wv)$ and has values in $\Ug_1$. But it is not a character, because $\wt{\Sp}(\Wv)$ does not have any non-trivial unitary character. However, when restricted to specific subgroups, it becomes a character. 
Let $\Wv=\X\oplus \Y$ be any polarization, and let $\M$ be the subgroup of $\Sp(\W)$ preserving $\X$ and $\Y$. Then $\chi_+$ is a character of
the preimage $\wt\M$ of $\M$ in $\wt{\Sp}(\Wv)$. 

For $g\in M$, let $\det(g|_\X)$ denote the determinant of $g$ acting on $\X$. Then
the formula 
\[
\det_\X^{-1/2}(\wt{g})=\chi_+(\wt{g}) |\det_X(g)|^{-1/2}
\]
defines a continuous group homomorphism $\det_\X^{-1/2}: \wt{\M} \to \C^\times$ such that 
$\left(\det_\X^{-1/2}(\wt{g})\right)^2=\det(g|_\X)^{-1}$ for all $\wt{g}\in \wt{\M}$. Moreover, 
\begin{equation}
\label{omegaonM}
\omega(\wt{g})v(x)=\det_\X^{-1/2}(\wt{g}) v(g^{-1}x) \qquad (\wt{g}\in \wt{\M}, \, v\in \mathcal{S}(\X),\, x\in \X)\,.
\end{equation}
See \cite[Proposition 4.28]{AubertPrzebinda_omega}. In particular, if $\wt{g}\in \wt{\M}$ and  
$\det(g|_\X)=1$, then 
\begin{equation}
\label{omegaonM-det1}
\omega(\wt{g})v(x)=v(g^{-1}x) \qquad (v\in \mathcal{S}(\X),\, x\in \X)\,.
\end{equation}

Suppose now that $(\G,\G')=(\Sp_{2n}(\R), \Og_{p,p})$ or $(\Og_{p,p}(\R), \Sp_{2n}(\R))$.
Then both $\G$ and $\G'$ preserve two (different) polarizations of $\W$. Then the restriction of 
$\chi_+$ to each of $\wt\G$ and $\wt\G'$ of is a character. Moreover these restrictions agree on 
the intersection $\wt\G \cap \wt\G'$ (because given by same function). Therefore $\chi_+$ is a character of $\wt\G\wt\G'$. 
Therefore
\begin{equation}\label{twistedrep}
\omega_0(\wt{g})=\chi_+(\wt{g})^{-1} \omega(\wt{g})
\end{equation}
is a representation of $\wt \G \wt \G'$, which is constant on the fibers of the covering. Hence it defines a representation of $\G\G'$ which we denote by the same symbol. 
Thus, for these pairs  $(\G,\G')$, 
we work not with $\omega$ but with $\omega_0$. 

\begin{rem}
If $(\G,\G')=(\Sp_{2n}(\R), \Og_{p,q})$ or $(\G,\G')=(\Og_{p,q},\Sp_{2n}(\R))$ with $p+q$ odd then  
there is no character twisting of $\omega$ allowing to reduce it to a representation on $\G\G'$.
This case includes for instance that of $(\Og_{1}, \Sp_{2n}(\R))$, where $\G\G'=\Sp_{2n}(\R)$. 

Suppose that $(\G,\G')=(\Og_{p,q},\Sp_{2n}(\R))$ with $p\leq q$.
Let $\omega_n$ denote the Weil representation of $\wt{\Sp}_{2n}(\R)$. Then 
\begin{align*}
\omega|_{\wt{\Sp}_{2n}(\R)}&=
\underbrace{\omega_n \otimes \cdots \otimes \omega_n}_{\text{$p$ times}} \otimes
\underbrace{\omega_n^c \otimes \cdots \otimes \omega_n^c}_{\text{$q$ times}}\\
&=\underbrace{(\omega_n \otimes \omega_n^c) \otimes\cdots (\otimes \omega_n \otimes \omega_n^c)}_{\text{$p$ times}} \otimes
\underbrace{\omega_n^c \otimes \cdots \otimes \omega_n^c}_{\text{$q-p$ times}}\,.
\end{align*}
Each tensor product $\omega_n \otimes \omega_n^c$ is a representation of $\wt{\Sp}_{2n}(\R)$ 
constant on the fibers of the metaplectic cover. Hence it gives a representation of $\Sp_{2n}(\R)$.
For the tensor product of $\omega_n^c$, it splits if and only if $q-p$ (i.e. $p+q$) is even. 

As a result, if $p+q$ is even then $\omega|_{\wt{\Sp}_{2n}(\R)}$ splits and the same character 
can be used to twist $\omega|_{\wt\G \wt\G'}$.
\end{rem}

Notice that if $\G$ and $\G'$ do not preserve the same polarization, then we get from \eqref{omegaonM} two different formulas for $\omega_0|_{\G}$ and $\omega_0|_{\G'}$. 
So, if $\G\subset \M$ but $\G'$ is not contained in $\M$, then \eqref{omegaonM} applies to $\G$ but not to $\G'$.
 
For instance, in the case of $(\G,\G')=(\SL_2(\R),\Og_{p,p})$, the formula for $\omega_0$ given in section \ref{section:SL2-Opp} corresponds to \eqref{omegaonM} (and more precisely \eqref{omegaonM-det1} since $\det(g|_\X)=1$ for $g\in \SL_2(\R)$) for a polarization 
of $\W=\X\oplus \Y$ which is preserved by $\SL_2(\R)$ but not by $\Og_{p,p}$. 
Here $\W=M_{2,2p}$ is equipped of the symplectic form 
\[
\inner{w_1}{w_2}=\tr(w_1 w_2^*) \qquad (w_1,w_2\in \Wv)
\]
where $w^*=sw^T j$ for $w\in \Wv$,  
\[
j=\begin{pmatrix}
0 & 1\\
-1 & 0
\end{pmatrix}
\qquad \text{and} \qquad
s=\begin{pmatrix}
0 & 1_p\\
1_p & 0
\end{pmatrix}\,,
\]
and $\X$ and $\Y$ consist respectively of the first two rows and the last two rows of the elements of $\W$. The case $(\G,\G')=(\Og_{1,1},\Sp_2(\R))$ 
is detailed in Appendix \ref{appendix:O11Sp2}.

\section{The dual pair $(\Og_{1,1},\Sp_2(\R))$ in $\Sp_4(\R)$}
\label{appendix:O11Sp2}

Let $\Wv=M_{2,2}(\R)$. For $w\in \Wv$ set $w^*=jw^T s$, where 
\[
j=\begin{pmatrix}
0 & 1\\
-1 & 0
\end{pmatrix}
\qquad \text{and} \qquad
s=\begin{pmatrix}
0 & 1\\
1 & 0
\end{pmatrix}
\]
are as in \eqref{J} and \eqref{element s}, respectively, and $w^T$ denotes the transpose of $w$.
We endow $\Wv$ with the non-degenerate symplectic form 
\begin{equation}
\label{symplectic-form}
\inner{w_1}{w_2}=\tr(w_1 w_2^*) \qquad (w_1,w_2\in \Wv)
\end{equation}
and denote by $\Sp_4(\R)$ the symplectic group of $(\Wv,\inner{\cdot}{\cdot})$.
The actions of $\Og_{1,1}$ and $\Sp_2(\R)$ on $\Wv$ respectively defined by 
\begin{align}\label{convention1}
&h(w)=hw \qquad (h\in\Og_{1,1}, w\in \Wv)\\
\label{convention2}
&g(w)=wg^{-1} \qquad (g\in\Sp_2(\R), w\in \Wv)
\end{align}
embed $\Og_{1,1}$ and $\Sp_2(\R)$ in $\Sp_4(\R)$ as mutually centralizing subgroups. 

Set 
\[
\X=\left\{ \begin{pmatrix}
x_1 & x_2 \\ 0 & 0
\end{pmatrix}; x_1,x_2\in \R\right\} \qquad \text{and}\qquad 
\Y=\left\{ \begin{pmatrix}
0 & 0 \\ y_1 & y_2
\end{pmatrix}; y_1,y_2\in \R\right\}\,.
\]
Then $\Wv=\X\oplus \Y$ is a complete polarization. 
Each element  $h_a=\begin{pmatrix}
a & 0 \\ 0 & a^{-1}
\end{pmatrix} \in \SOg_{1,1}$ preserves $\X$ and $\Y$, and $\det(h_a|_\X)=a^2$. 
Likewise, each element $g\in \Sp_2(\R)$ preserves $\X$ and $\Y$, and 
$\det (g|_\X)=1$. 

Let $\wt{\Sp}_4(\R) \ni \wt{g} \mapsto g \in \Sp_4(\R)$ be the metaplectic covering map and 
let $\wt{\SOg}_{1,1}$ and $\wt{\Sp}_2(\R)$ respectively denote the inverse image of $\SOg_{1,1}$ and $\Sp_2(\R)$ in $\wt{\Sp}_4(\R)$. 
Further, let $\M$ be the subgroup of $\Sp_4(\R)$ consisting of all elements preserving $\X$ and $\Y$, and let $\wt{\M}$ be its inverse image in $\wt{\Sp}_4(\R)$. Hence $\wt{\SOg}_{1,1}\cdot\wt{\Sp}_2(\R) \subseteq \wt{\M}$.
Let $(\omega,\L^2(\X))$ be the Weil representation of $\wt{\Sp}_4(\R)$
(attached to the character $\chi(r)=e^{2\pi i r}$ of $\R$).

By \eqref{omegaonM},
\[
\omega(\wt{g})v(x)=\det_\X^{-1/2}(\wt{g}) v(g^{-1}x) \qquad (\wt{g}\in \wt{\M}, \, v\in \mathcal{S}(\X),\, x\in \X)\,.\
\]
Since $\det (g|_\X)\neq 0$ for $g\in \SOg_{1,1}\cdot\Sp_2(\R)$, then
$\omega|_{\wt{\SOg}_{1,1}\cdot\wt{\Sp_2}(\R)}$ splits and we may choose a section $g\mapsto \wt{g}$ such that, by setting $\omega(g)=\omega(\wt{g})$, we have
\begin{align*}
&\omega(h_a)v(x)=|a|^{-1} v(h_a^{-1}x)=|a|^{-1} v(a^{-1}x) \qquad (a\in\R^\times, \, v\in \mathcal{S}(\X),\, x\in \X),\\
&\omega(g)v(x)=v(g^{-1}x)=v(xg) \qquad (g\in \Sp_2(\R), \, v\in \mathcal{S}(\X),\, x\in \X).
\end{align*}
(Observe that the right-hand sides agree on $\{\pm 1\}=\SOg_{1,1} \cap \Sp_2(\R)$.)  
The argument above applies to $\SOg_{1,1}$ but not to 
$\Og_{1,1}$ as $\Og_{1,1}=\SOg_{1,1}\sqcup s\SOg_{1,1}$ and 
$s$ does not preserve $\X$ and $\Y$. 
(Notice that $\Og_{1,1}$ preserve the polarization of $\W$ given by first and second columns.)
Nevertheless, it enough to understand $\omega(\wt{s})$. 
For this, we use the explicit definition of the Weil representation as given in 
\cite[Theorem 4.27]{AubertPrzebinda_omega}. 

Since by \eqref{convention1} $(s-1)(w)=(s-1)w$ we see that $\det_{\Wv}(s-1)=0$. Hence $\Wv\supsetneqq (s-1)\Wv$. This is going to force us to make some work. 

\begin{lem}
Let $\wt{s}\in \wt{\Og}_{1,1}\subseteq \wt{\Sp}(\Wv)$ be an inverse image of $s$.
Then, in the notation of Appendix \ref{appendix:Weil representation},
\[
\Theta(\wt{s})=\pm \frac{1}{2}\,,\qquad
T(\wt{s})=\pm \frac{1}{2} \mu_{(s-1)\Wv}\,, \qquad
\mathcal{K}(T(\wt{s}))(x,x')=\pm \chi(x' jx^T).
\]
Thus 
\begin{equation}
\label{omegawts}
\omega(\wt{s})v(x)=\pm \int_\X \chi(x'jx^T)v(x') \, dx'\qquad (v\in \mathcal{S}(\X))\,,
\end{equation}
where $dx=dx_1\,dx_2$ is the Lebsegue measure on $\X=\R^2$.
\end{lem}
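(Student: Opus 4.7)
The plan is to evaluate $\omega(\tilde{s}) = \Op \circ \mathcal{K} \circ T$ at $\tilde{s}$ by computing each ingredient, following the formalism of Appendix \ref{appendix:Weil representation}.

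First, I would analyze the linear algebra of $s$ acting on $\W$ via \eqref{convention1}. Since $s^2 = 1$, the action is an involution and $\W = \W^+ \oplus \W^-$ splits into $\pm 1$-eigenspaces, each of real dimension $2$. A direct check gives $\W^+ = \ker(s-1)$ and $\W^- = (s-1)\W$, in particular $\dim (s-1)\W = 2$. A useful auxiliary fact is that $\ker(s-1)$ and $(s-1)\W$ are symplectically orthogonal: for $k \in \ker(s-1)$ and $w \in \W$, $\langle k, (s-1)w \rangle = \langle sk, w\rangle - \langle k, w\rangle = 0$ because $sk=k$. This makes the subsequent formulas well defined on the quotient $\W/\ker(s-1)$.

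Second, I would compute the Gaussian $\chi_{c(s)}$ on $(s-1)\W$. Writing $(s-1)^{-1}u$ for $u \in \W^-$ via the complement $\W^-$, and using that $s-1$ acts as $-2\,\id$ on $\W^-$ while $s+1$ vanishes there, one obtains $(s+1)(s-1)^{-1}u = 0$ and hence $\chi_{c(s)}(u) = \chi(0) = 1$ for every $u \in (s-1)\W$. Consequently $T(\tilde{s}) = \Theta(\tilde{s})\,\mu_{(s-1)\W}$, so only the scalar $\Theta(\tilde{s})$ remains.

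Third, I would apply the $\Theta^2$ formula recalled in the appendix. With $\dim(s-1)\W = 2$ we have $\gamma(1)^{2\dim(s-1)\W-2} = \gamma(1)^2$, and on $\W/\ker(s-1) \cong \W^-$ (using a compatible $J$ chosen so that $\W^+$ and $\W^-$ are $B$-orthogonal), the operator $s-1$ reduces to $-2\,\id$. Matching the $B$-induced measures on $\W/\ker(s-1)$ and $(s-1)\W$ then yields $\Theta(\tilde{s})^2 = 1/4$, so $\Theta(\tilde{s}) = \pm 1/2$ and $T(\tilde{s}) = \pm \tfrac{1}{2}\mu_{(s-1)\W}$.

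Fourth, I would apply $\mathcal{K}$ and then $\Op$. The measure $\mu_{(s-1)\W}$ is supported on $\W^-$, which intersects each affine slice $x-x'+\Y$ transversely in a single point, characterized by $y \in \Y$ being the natural reflection of $x-x' \in \X$. The integral defining $\mathcal{K}(T(\tilde{s}))(x,x')$ therefore localizes to this unique $y$; the resulting Jacobian comparing $\mu_{(s-1)\W}$ with $d\mu_\Y$ equals $2$, exactly cancelling the prefactor $1/2$. A short computation using $\langle w_1, w_2 \rangle = \tr(w_1 w_2^*)$ and the definitions of $j$ and $s$ shows that at the localization point $\tfrac{1}{2}\langle y, x+x'\rangle = x' j x^T$, giving $\mathcal{K}(T(\tilde{s}))(x,x') = \pm \chi(x' j x^T)$. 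Applying $\Op$ then produces the desired integral operator, establishing \eqref{omegawts}.

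The main obstacle is the careful bookkeeping of the Haar measures normalized by the positive-definite form $B$ on $\W$, $(s-1)\W$, $\Y$, and $\W/\ker(s-1)$. Specifically, pinning down $\Theta(\tilde{s})^2 = 1/4$ and the Jacobian $2$ in the Weyl transform step requires making an explicit choice of compatible complex structure $J$ (commuting with $s$ so that $\W^\pm$ are both $B$-orthogonal and symplectically orthogonal) and comparing the resulting $B$-normalized volumes on the relevant subspaces and quotients. Once these normalizations are in place, the remainder is a routine unravelling of the definitions of $T$, $\mathcal{K}$, and $\Op$.
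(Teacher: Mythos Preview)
Your proposal is correct and follows essentially the same approach as the paper: compute $\Theta(\tilde s)$, show $\chi_{c(s)}\equiv 1$, pin down the normalized measure $\mu_{(s-1)\W}$, and then evaluate $\mathcal K$ and $\Op$. The paper carries this out with the explicit compatible complex structure $J(w)=-swj$ (which indeed commutes with $s$, making $\W^\pm$ $B$-orthogonal as you require) and obtains exactly the constants you predict: $\det$ equal to $4$ in the $\Theta^2$ step and the Jacobian $2$ realized as $\mu_{(s-1)\W}(x,y)=2\,\delta(x+y)\,dx\,dy$. Your argument for $\chi_{c(s)}=1$ via $(s+1)|_{\W^-}=0$ is in fact slightly cleaner than the paper's, which picks the preimage $(s-1)^{-1}\begin{pmatrix} x\\-x\end{pmatrix}=\begin{pmatrix}0\\x\end{pmatrix}$ and computes the pairing directly; both are valid since, as you note, any two preimages differ by an element of $\W^+$, which pairs to zero with $\W^-$.
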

\begin{proof}
Define $J(w)=-swj$. Then $J^2=-1$ and 
\begin{equation}
\label{innerproduct-1}
\inner{J w_1}{w_2}=\tr(-sw_1jjw_2^Ts)=\tr(w_1w_2^T) \qquad
(w_1,w_2\in \Wv)\,.
\end{equation}
is a positive definite symmetric bilinear form on $\Wv$. 
Hence $J\in \Sp_4(\R)$ is a positive definite compatible complex structure on $\Wv$. 

Let $L=J^{-1}(s-1)$. Using the convention \eqref{convention1}, explicitly we have  
\[
L(w)=-J(s-1)(w)=-s(s-1)wj=(s-1)wj\,.
\]
Hence
\[
LW=(s-1)W=\left\{ \begin{pmatrix}
x\\-x 
\end{pmatrix}; x\in M_{1,2}(\R)\right\}\,.
\]
Furthermore, for $u\in LW$, we have $L(u)=2uj$.
Hence, 
\[
4=\det\left(L|_{LW}\right)=\det\left(s-1:\Wv/\Ker(s-1) \to 
\Im(s-1)\right).
\]
Since $\gamma(1)=e^{\frac{\pi i}{4}}$, we obtain
\[
\Theta^2(s)=\left(e^{\frac{\pi i}{4}}\right)^{2\dim LW} 4^{-1}=4^{-1}\,.
\]
Hence $\Theta(\wt{s})=\pm \frac{1}{2}$.

For $T(\wt{s})$, we need to compute $\chi_{c(s)}$. 
Since 
$(s-1)\begin{pmatrix}
0 \\ x
\end{pmatrix}=\begin{pmatrix}
x\\-x
\end{pmatrix}$, it follows that
$(s-1)^{-1}\begin{pmatrix}
x \\ -x
\end{pmatrix}=\begin{pmatrix}
0\\x
\end{pmatrix}$. Hence
\[
(s+1)(s-1)^{-1}\begin{pmatrix}
x \\ -x
\end{pmatrix}=(s+1)\begin{pmatrix}
0 \\ x
\end{pmatrix}=\begin{pmatrix}
x \\ x
\end{pmatrix}\,.
\]
Thus, for $\begin{pmatrix}
x \\ -x
\end{pmatrix}\in (s-1)\Wv$,  
\begin{align*}
\langle(s+1)(s-1)^{-1}&\begin{pmatrix}
x \\ -x
\end{pmatrix},{\begin{pmatrix}
x \\ -x
\end{pmatrix}}\rangle=
\langle\begin{pmatrix}
x \\ -x
\end{pmatrix},{\begin{pmatrix}
x \\ -x
\end{pmatrix}}\rangle=
\tr\left(\begin{pmatrix}
x \\ -x
\end{pmatrix}j\begin{pmatrix}
x \\ -x
\end{pmatrix}^Ts\right)\\
&=
\tr\left(\begin{pmatrix}
xjx^T & -xjx^T\\
xjx^T & -xjx^T
\end{pmatrix}
\begin{pmatrix}
0 & 1\\
1 & 0
\end{pmatrix}\right)
=
\tr\left(\begin{pmatrix}
-xjx^T & xjx^T\\
-xjx^T & xjx^T
\end{pmatrix}\right)=0\,.
\end{align*}
Therefore, $\chi_{c(s)}=1$ and 
\[
T(\wt{s})=\Theta(\wt{s})\chi_{c(s)} \mu_{(s-1)\Wv}= \pm \frac{1}{2} \mu_{(s-1)\Wv}.
\]
We now determine the Haar measures on $\X$, $\Y$ and $(s-1)\W$ with the normalizations 
fixed in Appendix \ref{appendix:Weil representation}.
Notice that, by \eqref{innerproduct-1}, the restriction of $B$ to $\X$ is 
\[
B\left(\begin{pmatrix}
x\\0
\end{pmatrix}, \begin{pmatrix}
x'\\0
\end{pmatrix}\right)=\tr\left( \begin{pmatrix}
x\\0
\end{pmatrix} \begin{pmatrix}
{x'}^T & 0
\end{pmatrix}\right)=x{x'}^T\,.
\]
The unit cube in $\X=\R^2$ is $[0,1]^2$. Thus, in the fixed normalizations, $d\mu_\X=dx=dx_1\,dx_2$. Likewise, $d\mu_\Y=dy_1\, dy_2$.

A Haar measure $\mu_{(s-1)\Wv}$ on $(s-1)\Wv$ is a constant multiple of the pullback of the Lebesgue measure $\lambda$ on $\X=\R^2$ via the isomorphism $\alpha: (s-1)\Wv\ni \begin{pmatrix}
x\\-x
\end{pmatrix} \mapsto x\in\R^2$. Hence, as a measure on $\Wv=\X\oplus \Y$, 
we have $\mu_{(s-1)\Wv}(x,y)=C\delta(x+y)\,dx\, dy$. The constant $C\geq 0$ is fixed by the condition that the measure of the unit cube with respect to the restriction to $(s-1)\Wv$ of the inner product $B(w_1,w_2)=\inner{Jw_1}{w_2}$ is one. By \eqref{innerproduct-1},
\[
B\left(\begin{pmatrix}
x\\-x
\end{pmatrix}, \begin{pmatrix}
x'\\-x'
\end{pmatrix}\right)=\tr\left( \begin{pmatrix}
x\\-x
\end{pmatrix} \begin{pmatrix}
{x'}^T & -{x'}^T
\end{pmatrix}\right)=2x{x'}^T\,.
\]
The unit cube in $(s-1)\Wv$ with respect to $B$ is mapped by $\alpha$ into 
$[0,\frac{1}{\sqrt{2}}]^2$, with Lebsegue measure $\frac{1}{2}$. Hence $C=2$ and 
\[
\mu_{(s-1)\Wv}(x,y)=
2\delta(x+y)\,dx\, dy \qquad (x\in \X, y\in \Y).
\]
It follows that 
\begin{align*}
\mathcal{K}(T(\wt{s}))(x,x')
&=\pm \frac{1}{2} \int_\Y \mu_{(s-1)\Wv} \begin{pmatrix}
x-x'\\y
\end{pmatrix} \chi\left( \frac{1}{2}
\langle\begin{pmatrix}
0 \\ y
\end{pmatrix},{\begin{pmatrix}
x+x' \\ 0
\end{pmatrix}}\rangle \right) \;dy\\
&=\pm \frac{1}{2} \int_\Y 
\delta(x-x'+y)
\chi\left( \frac{1}{2}
\inner{\begin{pmatrix}
0 \\ y
\end{pmatrix}}{\begin{pmatrix}
x+x' \\ 0
\end{pmatrix}} \right)\; dy\\
&=\pm \chi\left( \frac{1}{2}
\inner{\begin{pmatrix}
0 \\ x-x'
\end{pmatrix}}{\begin{pmatrix}
x+x' \\ 0
\end{pmatrix}}
\right)\,.
\end{align*}
By \eqref{symplectic-form},
\begin{align*}
\langle\begin{pmatrix}
0 \\ x-x'
\end{pmatrix},&{\begin{pmatrix}
x+x' \\ 0
\end{pmatrix}}\rangle
=\tr \left( 
\begin{pmatrix}
0 \\ x-x'
\end{pmatrix}j\begin{pmatrix}
x^T+{x'}^T & 0
\end{pmatrix} s\right)
=\tr \left( 
\begin{pmatrix}
0 \\ x-x'
\end{pmatrix}j\begin{pmatrix}
0 & x^T+{x'}^T 
\end{pmatrix} \right)\\
&=\tr \left( 
\begin{pmatrix}
0 \\ xj-x'j
\end{pmatrix}
\begin{pmatrix}
0 & x^T+{x'}^T 
\end{pmatrix} \right)
=\tr \left( 
\begin{pmatrix}
0 & 0 \\ 0 & (xj-x'j)(x^T+{x'}^T)
\end{pmatrix}	\right)\\
&=(xj-x'j)(x^T+{x'}^T)=2x'jx^T
\end{align*}
since $xjx^T=x'j{x'}^T=0$. The formulas for $\mathcal{K}(T(\wt{s}))$ and 
$\omega(\wt{s})$ therefore follow.
\end{proof}

\biblio

\begin{thebibliography}{Wal88b}

\bibitem[AP14]{AubertPrzebinda_omega}
A.-M. Aubert and T.~Przebinda.
\newblock {A reverse engineering approach to the Weil Representation}.
\newblock {\em {Central Eur. J. Math.}}, {12}:{1500--1585}, {2014}.

\bibitem[Arn76]{Arnal76}
D.~Arnal.
\newblock Symmetric nonself-adjoint operators in an enveloping algebra.
\newblock {\em J. Funct. Anal.}, 21:432--447, 1976.

\bibitem[Dix69]{Dixmier1969-Hilbert}
J.~Dixmier.
\newblock {\em Les alg\`ebres d'op\'{e}rateurs dans l'espace hilbertien
  (alg\`ebres de von {N}eumann)}.
\newblock Cahiers Scientifiques, Fasc. XXV. Gauthier-Villars \'{E}diteur,
  Paris, 1969.

\bibitem[DM78]{DixmierMalliavin78}
J.~Dixmier and P.~Malliavin.
\newblock Factorisations de fonctions et de vecteurs ind\'{e}finiment
  diff\'{e}rentiables.
\newblock {\em Bull. Sci. Math. (2)}, 102(4):307--330, 1978.

\bibitem[DZ19]{DZ19}
S.~Dyatlov and M.~Zworski.
\newblock {\em Mathematical theory of scattering resonances}, volume 200 of
  {\em Graduate Studies in Mathematics}.
\newblock American Mathematical Society, Providence, RI, 2019.

\bibitem[Eps26]{Epstein}
P.~Epstein.
\newblock Schroedinger's quantum theory and the stark effect.
\newblock {\em Nature}, 118:444--445, 1926.

\bibitem[F{\"{u}}h05]{Fuhr}
H.~F{\"{u}}hr.
\newblock {\em Abstract harmonic analysis of continuous wavelet transforms},
  volume 1863 of {\em Lecture Notes in Mathematics}.
\newblock Springer-Verlag, Berlin, 2005.

\bibitem[Har07]{Harrell}
E.~M. Harrell, II.
\newblock Perturbation theory and atomic resonances since {S}chr\"{o}dinger's
  time.
\newblock In {\em Spectral theory and mathematical physics: a {F}estschrift in
  honor of {B}arry {S}imon's 60th birthday}, volume~76 of {\em Proc. Sympos.
  Pure Math.}, pages 227--248. Amer. Math. Soc., Providence, RI, 2007.

\bibitem[H{\"o}r83]{Hormander}
L.~H{\"o}rmander.
\newblock {\em The analysis of linear partial differential operators. {I}},
  volume 256 of {\em Grundlehren der Mathematischen Wissenschaften [Fundamental
  Principles of Mathematical Sciences]}.
\newblock Springer-Verlag, Berlin, 1983.
\newblock Distribution theory and Fourier analysis.

\bibitem[How79]{HoweL2}
R.~Howe.
\newblock {L}$\null^2$-duality for stable reductive dual pairs.
\newblock preprint, 1979.

\bibitem[How89]{HoweRemarks}
R.~Howe.
\newblock {Remarks on Classical Invariant Theory}.
\newblock {\em {Trans. Amer. Math. Soc.}}, {313}:{539--570}, {1989}.

\bibitem[HP09]{HP09}
J.~Hilgert and A.~Pasquale.
\newblock {Resonances and residue operators for symmetric spaces of rank one}.
\newblock {\em {J. Math. Pures Appl. (9)}}, {91}:{495--507}, {2009}.

\bibitem[HT92]{HoweTan}
R.~Howe and E.-C. Tan.
\newblock {\em Nonabelian harmonic analysis}.
\newblock Universitext. Springer-Verlag, New York, 1992.
\newblock Applications of ${{\rm{S}}L}(2,{{\bf{R}}})$.

\bibitem[HT93]{HT93}
R.~Howe and E.-C. Tan.
\newblock Homogeneous functions on light cones: the infinitesimal structure of
  some degenerate principal series representations.
\newblock {\em Bull. Amer. Math. Soc. (N.S.)}, 28(1):1--74, 1993.

\bibitem[HU91]{HoweUmeda}
R.~Howe and T.~Umeda.
\newblock The {C}apelli identity, the double commutant theorem, and
  multiplicity-free actions.
\newblock {\em Math. Ann.}, 290(3):565--619, 1991.

\bibitem[Ito05]{Itoh05}
M.~Itoh.
\newblock Capelli identities for reductive dual pairs.
\newblock {\em Adv. Math.}, 194(2):345--397, 2005.

\bibitem[Kna86]{knappLie2}
A.~W. Knapp.
\newblock {\em Representation theory of semisimple groups}, volume~36 of {\em
  Princeton Mathematical Series}.
\newblock Princeton University Press, Princeton, NJ, 1986.

\bibitem[Lan85]{LangSL2R}
S.~Lang.
\newblock {\em {${\rm SL}_2({\bf R})$}}, volume 105 of {\em Graduate Texts in
  Mathematics}.
\newblock Springer-Verlag, New York, 1985.

\bibitem[Mac76]{MackeyUnitaryBook}
G.~W. Mackey.
\newblock {\em The theory of unitary group representations}.
\newblock Chicago Lectures in Mathematics. University of Chicago Press,
  Chicago, Ill.-London, 1976.

\bibitem[MW00]{MW00}
R.~J. Miatello and C.~E. Will.
\newblock {The residues of the resolvent on Damek-Ricci spaces}.
\newblock {\em {Proc. Amer. Math. Soc.}}, {128}:{1221--1229}, {2000}.

\bibitem[Nus64]{Nussbaum-Duke64}
A.~E. Nussbaum.
\newblock Reduction theory for unbounded closed operators in {H}ilbert space.
\newblock {\em Duke Math. J.}, 31:33--44, 1964.

\bibitem[Pou72]{Poulsen72}
N.~S. Poulsen.
\newblock On {$C^{\infty }$}-vectors and intertwining bilinear forms for
  representations of {L}ie groups.
\newblock {\em J. Funct. Anal.}, 9:87--120, 1972.

\bibitem[Prz93]{PrzebindaUnitary}
T.~Przebinda.
\newblock Characters, dual pairs, and unitary representations.
\newblock {\em Duke Math. J.}, 69(3):547--592, 1993.

\bibitem[Prz96]{PrzebindaInfinitesimal}
T.~Przebinda.
\newblock {The duality correspondence of infinitesimal characters}.
\newblock {\em { Coll. Math}}, {70}:{93--102}, {1996}.

\bibitem[Sch90]{Schmuedgen90}
K.~Schm\"{u}dgen.
\newblock {\em Unbounded operator algebras and representation theory},
  volume~37 of {\em Operator Theory: Advances and Applications}.
\newblock Birkh\"{a}user Verlag, Basel, 1990.

\bibitem[Seg59]{Segal59}
I.~E. Segal.
\newblock A theorem on the measurability of group-invariant operators.
\newblock {\em Duke Math. J.}, 26:549--552, 1959.

\bibitem[SW71]{SteinWeiss}
E.~M. Stein and G.~Weiss.
\newblock {\em Introduction to {F}ourier analysis on {E}uclidean spaces}.
\newblock Princeton University Press, Princeton, N.J., 1971.

\bibitem[Wal88a]{WallachI}
N.~Wallach.
\newblock {\em {Real Reductive Groups I}}.
\newblock {Academic Press}, {1988}.

\bibitem[Wal88b]{WallachII}
N.~Wallach.
\newblock {\em {Real Reductive Groups II}}.
\newblock {Academic Press}, {1988}.

\bibitem[Wat95]{WatsonBessel}
G.~N. Watson.
\newblock {\em A treatise on the theory of {B}essel functions}.
\newblock Cambridge Mathematical Library. Cambridge University Press,
  Cambridge, 1995.

\end{thebibliography}
\end{document}